\numberwithin{equation}{section} 
\definecolor{mm}{RGB}{131,49,154}
\definecolor{gg}{RGB}{0,150,0}
\definecolor{bb}{RGB}{33,77,169}
\newtheorem{thm}{Theorem}[section]
\newtheorem{den}{Definition}[section]
\newtheorem{rmk}{Remark}[section]
\newtheorem{lem}{Lemma}[section]
\newtheorem{alg}{Algorithm}[section]
\newenvironment{keywords}
{\par\noindent\textbf{Keywords:}}
\definecolor{ocre}{RGB}{243,102,25}
\definecolor{mygray}{RGB}{243,243,244}
\definecolor{deepGreen}{RGB}{26,111,0}
\definecolor{shallowGreen}{RGB}{235,255,255}
\definecolor{deepBlue}{RGB}{61,124,222}
\definecolor{shallowBlue}{RGB}{235,249,255}
\newtheoremstyle{mytheoremstyle}{3pt}{3pt}{\normalfont}{0cm}{\rmfamily\bfseries}{}{1em}{{\color{black}\thmname{#1}~\thmnumber{#2}}\thmnote{\,--\,#3}}
\newtheoremstyle{myproblemstyle}{3pt}{3pt}{\normalfont}{0cm}{\rmfamily\bfseries}{}{1em}{{\color{black}\thmname{#1}~\thmnumber{#2}}\thmnote{\,--\,#3}}
\theoremstyle{mytheoremstyle}
\newmdtheoremenv[linewidth=1pt,backgroundcolor=shallowGreen,linecolor=deepGreen,leftmargin=0pt,innerleftmargin=20pt,innerrightmargin=20pt,]{theorem}{Theorem}[section]
\theoremstyle{mytheoremstyle}
\newmdtheoremenv[linewidth=1pt,backgroundcolor=shallowBlue,linecolor=deepBlue,leftmargin=0pt,innerleftmargin=20pt,innerrightmargin=20pt,]{definition}{Definition}[section]
\theoremstyle{myproblemstyle}
\newmdtheoremenv[linecolor=black,leftmargin=0pt,innerleftmargin=10pt,innerrightmargin=10pt,]{problem}{Problem}[section]
\pgfplotsset{width=8cm,compat=1.9}
\title{\LARGE A novel high-order linearly implicit and energy-stable additive Runge-Kutta methods for gradient flow models.}
\author[a]{Xuelong Gu}
\author[a]{Wenjun Cai}
\author[a]{Yushun Wang \thanks{Corresponding author: wangyushun@njnu.edu.cn}}
\affil[a]{Ministry of Education Key Laboratory for NSLSCS \\ Jiangsu Collaborative Innovation Center of Biomedical Functional Materials \\ School of Mathematical Sciences \\ 
Nanjing Normal University, Nanjing, 210023, China.  \\ \vspace{-2cm} }
\date{}
\begin{document}
\maketitle
{\noindent}	 \rule[-10pt]{15.5cm}{0.1em}
\begin{abstract}
	This paper introduces a novel paradigm for constructing linearly implicit and high-order unconditionally energy-stable schemes for general gradient flows, utilizing the scalar auxiliary variable (SAV) approach and the additive Runge-Kutta (ARK) methods. We provide a rigorous proof of energy stability, unique solvability, and convergence. The proposed schemes generalizes some recently developed high-order, energy-stable schemes and address their shortcomings.
	On the one other hand, the proposed schemes can incorporate existing SAV-RK type methods after judiciously selecting the Butcher tables of ARK methods \cite{sav_li,sav_nlsw}. The order of a SAV-RKPC method can thus be confirmed theoretically by the order conditions of the corresponding ARK method. Several new schemes are constructed based on our framework, which perform to be more stable than existing SAV-RK type methods. On the other hand, the proposed schemes do not limit to a specific form of the nonlinear part of the free energy and can achieve high order with fewer intermediate stages compared to the convex splitting ARK methods \cite{csrk}.
	 Numerical experiments demonstrate stability and efficiency of proposed schemes.
\end{abstract}
\begin{small}
	\begin{keywords}
		\centering
		Energy-stable schemes, Scalar auxiliary variable approach, Additive Runge-Kutta methods, Linearly implicit schemes.
	\end{keywords}
\end{small}
{\noindent}	 \rule[-10pt]{15.5cm}{0.1em}
\section{Introduction}

Phase field models are versatile mathematical equations widely used in physics, material science, and mathematics to simulate various physical phenomena, including the diffusion of two-phase interfaces, phase transitions in materials, and mechanical properties \cite{001,002,003,004}. These models are useful for describing different phases of material and the phase transitions and microstructural changes that occur in non-equilibrium states. 


The phase field model is usually represented as a gradient flow of a free energy functional $\mathcal{F}(u)$ as follows:
\begin{equation}\label{eq:intr_gradient_flow}
\begin{aligned}
    \frac{\partial u}{\partial t}= \mathcal{G} \frac{\delta \mathcal{F}}{\delta u}, \quad (\mathbf{x}, t) \in \Omega \times (0, T], \\
\end{aligned}
\end{equation}
with the initial condition $u(\mathbf{x}, 0) = u_0(\mathbf{x})$, where $u$ is a state variable, $\Omega \subset \mathbb{R}^n$ represents the computational domain, $\frac{\delta \mathcal{F}}{\delta u}$ denotes the variational derivative of $\mathcal{F}$ to $u$, and $\mathcal{G}$ is a non-positive mobility operator. Classical phase field models include the Allen-Cahn (AC) equation \cite{intr_ac}, the Cahn-Hilliard (CH) equation \cite{intr_ch}, the molecular beam epitaxy (MBE) equation \cite{intr_mbe}, etc \cite{intr_other1,intr_crystal,intr_other2}. A significant aspect of \eqref{eq:intr_gradient_flow} is that the system preserves the following energy dissipation law when appropriate boundary conditions are imposed on $u$.
\begin{equation}\label{eq:intr_edl}
    \frac{d \mathcal{F}}{dt} = (\frac{\delta \mathcal{F}}{\delta u}, \frac{\partial u}{\partial t}) = (\frac{\delta \mathcal{F}}{\delta u}, \mathcal{G} \frac{\delta \mathcal{F}}{\delta u}) \leq 0.
\end{equation}

Due to the nonlinearity of \eqref{eq:intr_gradient_flow}, its analytical solution is typically intractable. Therefore, developing efficient and stable numerical schemes is imperative. One approach is constructing schemes that inherit a discrete counterpart of \eqref{eq:intr_edl}, known as energy-stable methods \cite{grad_stable_ch}. As demonstrated in \cite{dvd}, energy-stable methods can prevent numerical oscillations and unphysical solutions, thus have been the focus of extensive researchers over the past few decades. Classical energy-stable methods include convex splitting (CS) methods \cite{convex_splitting1,grad_stable_ch,liao_bdf,csrk} and discrete variational derivative (DVD) methods \cite{dvd2,dvd_high,dvd1,qiao_mixed_fe} and so on. CS and DVD methods are fully implicit, thus requiring solving a nonlinear system at each time step. To improve computational efficiency, researchers have suggested linearly implicit or explicit energy-stable schemes, such as stabilized semi-implicit methods \cite{cn_ab,tang_imex}, exponential time difference methods \cite{du_2019,du_2021,mbe_etd3,ju_mbe}, and the leapfrog methods \cite{mbe_leapfrog,hou_leapfrog}.
The numerical methods discussed above are exclusive to particular gradient flow models and can not be effortlessly adapted to others.  This status quo did not change until the energy quadratization (EQ) methods \cite{ieq1,ieq2,ieq3,ieq4} were proposed. EQ methods provide an elegant platform for constructing linearly implicit schemes, but they involve solving linear systems with variable coefficients at each time step. In \cite{sav_shen}, Shen et al. proposed scalar auxiliary variable (SAV) methods.  Besides their unconditional stability, SAV methods require only the solution of a linear system with constant coefficients in each step. Furthermore, SAV approaches provide a universal framework for developing linearly implicit energy-stable schemes that can be extended to a variety of complex models \cite{sav_ns_err,sav_ns_err2,sav_ch3,sav_chhs_err}. Due to these advantages, SAV methods have received attention and are promoted in \cite{lag,GSAV1,svm,GSAV2}.

However, the above methods are limited to second-order accuracy, which may not accommodate high precision requirements. The nonlinearity of phase field models makes it difficult to develop high-order energy-stable schemes. In \cite{csrk}, the authors present high-order energy-stable schemes by combining additive Runge-Kutta (ARK) methods with CS techniques (CS-ARK). To guarantee energy stability, these approaches impose stringent criteria on the coefficients of the ARK methods, necessitating a large number of intermediate stages even for a second-order scheme. Thus, the currently identified energy-stable CS-ARK methods are limited to third-order. In \cite{ac_hbvm,dvd_high}, energy-stable schemes based on the Hamiltonian boundary value or discrete gradient methods are presented. These schemes are fully implicit and thus computationally expensive. Akrivis et al. introduced in \cite{sav_rk_extra} novel linearly implicit schemes based on a combination of SAV and RK (SAV-RK) approaches. For explicit discretization of nonlinear terms, they incorporated extrapolation techniques to predict solutions at specified time levels. The resulting methods are referred to as SAV-RKEX. However, excessive interpolation points lead to highly oscillatory interpolation polynomials, resulting in inaccurate predictions. Li et al. developed SAV-RKPC methods in \cite{gong_nls} to obtain a more accurate prediction of numerical solutions at intermediate stages, significantly improving the stability and accuracy of SAV-RKEX methods. Nevertheless, such a technique increases the computational costs, and there is no theoretical guarantee of the necessary number of iterations to achieve adequate accuracy.

In this paper, we propose a novel paradigm for constructing linearly implicit and high-order unconditionally energy-stable schemes, combining the SAV approach with the ARK methods. The proposed methods overcome the limitations of both CS-ARK and SAV-RK methods and can be applied to gradient flow systems with general nonlinear functionals. On the one hand, to guarantee energy stability, the proposed methods require only the algebraic stability of the implicit part of ARK methods. This enables the methods to achieve high accuracy and energy stability with fewer intermediate stages. On the other hand, our approach can be regarded as a novel prediction correction technique that avoids the imprecision of extrapolation techniques used in the SAV-RKEX method and does not require iterative procedures for prediction in SAV-RKPC. Thus, the proposed approach guarantees both efficiency and stability. Additionally, our framework can accommodate all SAV-RK type integrators with some appropriate modifications, enabling us to theoretically analyze the consistency of SAV-RKPC(or EQ) methods proposed in \cite{gong_nls,ieq_gong} by exploiting the order conditions of ARK methods.

The overall structure of the remaining contexts is summarized below. In Section \ref{sec2}, we briefly overview the ARK and SAV methods. In Section \ref{sec3}, we reformulate the gradient flow model into an equivalent one and propose our new algorithms. Then, we prove the unconditional energy stability and solvability of the proposed methods. Moreover, we demonstrate the order condition of SAV-RKPC methods by regarding it as an ARK method. The numerical examples and comparisons are made in Section \ref{sec5}. Finally, we conclude the whole work in Section \ref{sec6}.

\section{Overview of ARK methods and SAV reformulation of gradient flows}\label{sec2}
In this section, we briefly overview the additive Runge-Kutta (ARK) methods. Some basic notations and concepts are also presented. By incorporating a scalar auxiliary variable, the original gradient flow model is transformed into an equivalent one (known as the SAV reformulation). The reformulated system preserves the quadratic energy and provides an elegant platform for developing high-order and linearly implicit unconditionally energy-stable numerical methods.

\subsection{ARK methods}
We provide an overview of ARK methods, which are commonly used to solve the initial value problem for the following additive partitioned system:
\begin{equation}\label{model_ode}
    u_t(\mathbf{x}, t) = f(u) + g(u), \quad u(\mathbf{x}, 0) = u_0(\mathbf{x}).
\end{equation}
Here, the right-hand side of \eqref{model_ode} is subdivided with respect to stiffness, nonlinearity, dynamical behavior, etc.  Before we proceed, it is helpful to introduce the Butcher notations for two $s$-stage RK methods.
\begin{small}
\begin{equation} \label{butcher}
\begin{array}{c| c}
  c & A \\
  \hline
  & b^{\mathrm{T}}
\end{array}
=
\begin{array}{c| c c c c }
	c_0 & a_{00} & \cdots & a_{0s-1} \\
	c_1 & a_{10} & \cdots & a_{1s-1} \\
	\vdots  & \vdots & \cdots & \vdots \\
	c_{s-1} &  a_{s-1 0}  & \cdots & a_{s-1 s-1} \\
	\hline
		& b_0 & \cdots & b_{s-1}
\end{array}
, \quad
\begin{array}{c| c}
	\widehat{c} & \widehat{A} \\
	\hline
			& \widehat{b}^{\mathrm{T}}
\end{array}
= 
\begin{array}{c| c c c }
	\widehat{c}_0 & \widehat{a}_{00}  & \cdots & \widehat{a}_{0s-1} \\
	\widehat{c}_1 & \widehat{a}_{10}  & \cdots & \widehat{a}_{1s-1} \\
	\vdots & \vdots & \cdots & \vdots \\
	\widehat{c}_{s-1} &  \widehat{a}_{s-1 0}  & \cdots & \widehat{a}_{s-1 s-1} \\
	\hline
		& \widehat{b}_0  & \cdots & \widehat{b}_{s-1}
\end{array}
,
\end{equation}
\end{small}
where $A \in \mathbb{R}^{s \times s}, b \in \mathbb{R}^s$, and $c = A \mathbf{1}$ with $\mathbf{1} = (1, 1, \cdots, 1)^\mathrm{T} \in \mathbb{R}^s$. $\widehat{A}$, $\widehat{c}$ are defined in the similar manner.
\begin{den}[Explicit RK (ERK) methods]
    A RK method is explicit if $a_{ij} = 0$ for $j \geq i-1$.
\end{den}
\begin{den}[Diagonally implicit RK (DIRK) methods]
    A RK method is diagonally implicit if $a_{ij} = 0$ for $j \geq i$ and there exists $0 \leq i \leq s-1, a_{ii} \neq 0$.
\end{den}
\begin{den}[algebraically stable RK method \cite{burrage_efficiently_1982, burrage_stability_1979,ieq_gong}]
    Let us consider a symmetric matrix with entries $M_{ij} = b_i a_{ij} + b_j a_{ji} - b_i b_j$. A RK method is algebraically stable if its coefficients satisfy the following stability criteria.
\begin{itemize}
    \item $b_i \geq 0, \ \forall i = 1, 2, \cdots, s$,
    \item $M$ is positive semi-definite.
\end{itemize}
\end{den}
We partition the time interval uniformly with a step size of $\tau$ and denote the time grid points as $t_n = n \tau$. Let $N_t = \left[\frac{T}{\tau}\right]$. Assuming that $u^n$ has been solved in advance. The ARK methods then update $u^{n+1}$ through two steps. 

First, the intermediate stages $u_{ni} \ (i = 0, 1, \cdots, s-1)$ are computed from
\begin{equation*}
        u_{ni} = u^n + \tau \sum\limits_{j=0}^{s-1} a_{ij} f(u_{nj}) + \tau \sum\limits_{j=0}^{s-1} \widehat{a}_{ij} g(u_{nj}),
\end{equation*}
Then, we update the solution by
\begin{equation*}
    u^{n+1} = u^n + \tau \sum\limits_{i=0}^{s-1} b_i f(u_{ni}) + \tau\sum\limits_{i=0}^{s-1} \widehat{b}_i g(u_{ni}).
\end{equation*}

It is worth mentioning that the above ARK methods have been employed to develop energy-stable schemes for phase field models in \cite{csrk} and maximum bound principle methods for the AC equations in \cite{imex_fac}.

\begin{rmk}\label{rmk_prk}
  We emphasize that each ARK method can be considered as a partitioned Runge-Kutta (PRK) method \cite{hairer_book}. Specifically, let us introduce an equivalent reformulation of \eqref{model_ode} as follows:
\begin{equation}\label{imexrk_equivalent_system}
\left\lbrace
\begin{aligned}
    &\dot{u}_f(\mathbf{x}, t) = f(u), \ \dot{u}_g(\mathbf{x}, t) = g(u), \\ 
    & u(\mathbf{x}, t) = u_f(\mathbf{x}, t) + u_g(\mathbf{x}, t),
\end{aligned}
\right.
\end{equation}
 It is straightforward to see that \eqref{imexrk_equivalent_system} is equivalent to \eqref{model_ode} if the consistent initial condition $u_f(\mathbf{x}, 0) + u_g(\mathbf{x}, 0) = u^0(\mathbf{x})$ is imposed. By employing a PRK method to \eqref{imexrk_equivalent_system} and eliminating the intermediate variables $u_f$, $u_g$, we readily obtain the ARK method as mentioned above.
\end{rmk}

By Remark \ref{rmk_prk}, we can readily infer that an ARK method has an order of $p$ if the corresponding PRK method has an order of $p$, as a ARK method is essentially a PRK method applied to the extended systems \eqref{imexrk_equivalent_system}. Adrian et al. conducted an extensive study on generalized ARK methods in \cite{ark_general} and provided a comprehensive list of their order conditions. Table \ref{order_condition} summarizes the order conditions of ARK methods up to the third-order for convenience.
  \begin{table}[H]\label{order_condition}
  \begin{center}
  \caption{Order conditions of ARK methods up to third order accuracy, where $b \cdot c = \sum\limits_{i=0}^{s-1} b_i c_i$, 
  	$b \odot c = (b_0c_0, \cdots b_{s-1}c_{s-1})^\mathrm{T}$, and $c^2 = c \odot c$.}
  \begin{tabular}{ c c c c }
  \toprule
  Order & \multicolumn{2}{c}{Stand-alone conditions} & Coupling  conditions \\
   \hline
  1 &  $b \cdot \bm{1} = 1$ & $\widehat{b} \cdot \bm{1} = 1$ & $\star$ \\
  \hdashline
  2 &  $b \cdot c = 1/2$ & $\widehat{b} \cdot \widehat{c} = 1/2$ & $b \cdot \widehat{c} = \widehat{b} \cdot c = 1/2$ \\
  \hdashline
  \multirow{5}{*}{3} & $b \cdot c^2 = 1/3$ & $\widehat{b} \cdot \widehat{c}^2 = 1/3$ & $b \cdot (c \odot \widehat{c}) = b \cdot (\widehat{c} \odot c) = 1/3$ \\
    & $b \cdot Ac = 1/6$ & $\widehat{b} \cdot \widehat{A}\widehat{c} = 1/6$ & $\widehat{b} \cdot (c \odot \widehat{c}) = \widehat{b} \cdot (\widehat{c} \odot c)  = 1/3$ \\
    &  &  & $b \cdot \widehat{c}^2 = \widehat{b} \cdot c^2 = 1/3$ \\
    &  &  & $b \cdot A\widehat{c} = b \cdot \widehat{A} c = b \cdot \widehat{A}\widehat{c} =   1/6$ \\
    & &  & $\widehat{b} \cdot A\widehat{c} = \widehat{b} \cdot \widehat{A} c = \widehat{b} \cdot A c =   1/6$ \\
  \bottomrule
\end{tabular}
\end{center}		
\end{table}

\subsection{Gradient flow systems and their SAV reformulation}
A gradient flow model can be expressed generally as
\begin{equation}\label{general_gradient_flow}
    u_t(\mathbf{x}, t) = \mathcal{G} \frac{\delta \mathcal{F}}{\delta u}, \ \mathbf{x} \in \Omega,
\end{equation}
where $u$ is a state variable, $\mathcal{G} \in \mathbb{R}^{d \times d}$ is a negative semi-definite mobility operator, and $\frac{\delta \mathcal{F}}{\delta u}$ is the variational derivative of the free energy functional $\mathcal{F}$  to $u$. The triple $(u, \mathcal{G}, \mathcal{F})$ uniformly specifies a gradient flow system. When appropriate boundary conditions are imposed on $u$, system \eqref{general_gradient_flow} dissipates the free energy as follows:
\begin{equation}
    \frac{d \mathcal{F}}{dt} = \left( \frac{\delta \mathcal{F}}{\delta u}, \frac{\partial u}{\partial t} \right) = \left( \frac{\delta \mathcal{F}}{\delta u}, \mathcal{G} \frac{\delta \mathcal{F}}{\delta u} \right) \leq 0,
\end{equation}
where $(u, v) = \int_\Omega u v d\mathbf{x}, \ \forall u, v \in L^2(\Omega)$ is the inner product. Moreover, we denote by $\|u\| = \sqrt{(u, u)}$ the corresponding norm. 

For illustration, let us assume a free energy functional of the form:
\begin{equation}\label{original_energy}
    \mathcal{F}(u, \nabla u) = \frac{1}{2}(u, \mathcal{L}u) + (F(u, \nabla u), 1),
\end{equation}
where $\mathcal{L}$ is a linear, self-adjoint, and positive definite operator, $F$ represents a bulk energy bounded below. The SAV approach introduces a new scalar variable such that
\begin{equation}\label{q}
    q(t) = \sqrt{ (F(u, \nabla u), 1) + C },
\end{equation}
where $C$ is a sufficiently large positive constant to guarantee that the square root in \eqref{q} makes sense. The energy functional \eqref{original_energy} can be rewritten into a quadratic form as
\begin{equation}\label{eq-quadratic_energy}
    \mathcal{F}(u, q) = \frac{1}{2}(u, \mathcal{L}u) + q^2 - C.
\end{equation}
Let $W(u) = \sqrt{ (F(u, \nabla u), 1) + C}$ for simplicity. The model \eqref{general_gradient_flow} is reformulated into an equivalent system using the SAV approach \cite{sav_rk_extra,sav_shen,sav_shen_siam}, as shown below:
\begin{equation}\label{sav_reformulation}
\left\lbrace
\begin{aligned}
   u_t &= \mathcal{G} (\mathcal{L}u + 2q \frac{\delta W}{\delta u} - 2q \nabla \cdot \frac{\delta W}{\delta \nabla u} ), \\
    q_t &= (\frac{\delta W}{\delta u}, u_t ) + ( \frac{\delta W}{\delta \nabla u}, \nabla u_t ),
\end{aligned}
\right.
\end{equation}
equipped with the consistent initial conditions
\begin{equation}\label{sav_initial_condition}
    u(\mathbf{x}, 0) = u_0(\mathbf{x}), \quad q(0) = \sqrt{(F(u_0, \nabla u_0), 1) + C}.
\end{equation}
Taking the inner products on both sides of the first and second equations of \eqref{sav_reformulation} by $\mathcal{L}u + 2q \frac{\delta W}{\delta u} - 2q \nabla \cdot \frac{\delta W}{\delta \nabla u}$ and $2q$, respectively, and then combining the resulting equations, it is readily to confirm that system \eqref{sav_reformulation} admits the following energy dissipation law.
\begin{equation}
    \frac{d}{dt} \mathcal{F}(u, q) = \big( \mathcal{L}u + 2q \frac{\delta W}{\delta u} - 2q \nabla \cdot \frac{\delta W}{\delta \nabla u}, \mathcal{G} (\mathcal{L}u + 2q \frac{\delta W}{\delta u} - 2q \nabla \cdot \frac{\delta W}{\delta \nabla u} )\big) \leq 0.
\end{equation}
\section{High-order linearly implicit and energy-stable schemes}\label{sec3}
\subsection{Construction of time integrators} \label{sec3-1}
Let us further reformulate \eqref{sav_reformulation} as follows:
\begin{equation} \label{sav_tark_reformulation}
\left\lbrace
\begin{aligned}
    v_t &= \mathcal{G} \big( \mathcal{L}v + 2q \frac{\delta W}{\delta u}[v] - 2q \nabla \cdot \frac{\delta W}{\delta \nabla u}[v] \big), \\
    u_t &= \mathcal{G} \big( \mathcal{L}u + 2 q \frac{\delta W}{\delta u}[v] - 2q \nabla \cdot \frac{\delta W}{\delta \nabla u}[v]\big), \\
    q_t &= \big( \frac{\delta W}{\delta u}[v], u_t \big) + \big( \frac{\delta W}{\delta \nabla u}[v], \nabla u_t \big),
\end{aligned}
\right.
\end{equation} 
equipped with the initial conditions
\begin{equation} \label{sav_tark_initial_condition}
    u(\mathbf{x}, 0) = v(\mathbf{x}, 0) = u_0(\mathbf{x}), \quad q(0) = \sqrt{(f(u_0(\mathbf{x}), \nabla u_0(\mathbf{x})), 1) + C}.
\end{equation}
We first demonstrate the equivalence between the reformulated system \eqref{sav_tark_reformulation}, \eqref{sav_tark_initial_condition} and the original system \eqref{general_gradient_flow}.
\begin{lem}\label{lem_equivalence}
    Suppose that $\mathcal{L}$ is a linear, self-adjoint, and positive definite operator. The reformulation \eqref{sav_tark_reformulation} and the initial condition \eqref{sav_tark_initial_condition} are equivalent to \eqref{general_gradient_flow}.
\end{lem}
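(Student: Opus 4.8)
The plan is to show that the auxiliary copy $v$ and the scalar $q$ introduced in \eqref{sav_tark_reformulation} must collapse onto the genuine solution of \eqref{general_gradient_flow}: concretely, that $v \equiv u$ and $q(t) \equiv W(u(t))$ for all $t$, after which the $u$-equation reduces exactly to the original gradient flow. First I would establish $v = u$. Subtracting the second equation of \eqref{sav_tark_reformulation} from the first and writing $w := u - v$, the nonlinear contributions cancel identically because both are frozen at the same argument $v$, leaving the linear homogeneous evolution $w_t = \mathcal{G}\mathcal{L}w$ with $w(\cdot,0) = 0$ by \eqref{sav_tark_initial_condition}. To force $w \equiv 0$ I would run an energy estimate built on the self-adjoint, positive-definite operator $\mathcal{L}$: setting $E(t) = \tfrac12(w, \mathcal{L}w) \ge 0$ and differentiating gives $\frac{d}{dt}E = (w_t, \mathcal{L}w) = (\mathcal{G}\mathcal{L}w, \mathcal{L}w) \le 0$ since $\mathcal{G}$ is negative semi-definite. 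Because $E(0) = 0$, $E$ is nonnegative, and $E$ is nonincreasing, we conclude $E \equiv 0$, and positive definiteness of $\mathcal{L}$ then forces $w \equiv 0$, i.e. $v = u$ throughout.

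With $v = u$ in hand I would identify $q$ with $W(u)$. Since $W$ depends on $t$ only through the spatial integral $(F,1)$, it is spatially constant, and the chain rule yields $\frac{\delta W}{\delta u} = \frac{1}{2W}\frac{\partial F}{\partial u}$ and $\frac{\delta W}{\delta \nabla u} = \frac{1}{2W}\frac{\partial F}{\partial \nabla u}$, whence $\frac{d}{dt}W(u(t)) = (\frac{\delta W}{\delta u}, u_t) + (\frac{\delta W}{\delta \nabla u}, \nabla u_t)$. This is precisely the right-hand side of the third equation of \eqref{sav_tark_reformulation} after putting $v = u$, so $\frac{d}{dt}(q - W(u)) = 0$; matching the initial data $q(0) = W(u_0)$ in \eqref{sav_tark_initial_condition} gives $q(t) = W(u(t))$ for all $t$. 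Substituting $v = u$ and $q = W(u)$ into the $u$-equation and using the scalar-pullout identities $2W\frac{\delta W}{\delta u} = \frac{\partial F}{\partial u}$ and $2W\,\nabla\cdot\frac{\delta W}{\delta \nabla u} = \nabla\cdot\frac{\partial F}{\partial \nabla u}$ collapses it to $u_t = \mathcal{G}(\mathcal{L}u + \frac{\partial F}{\partial u} - \nabla\cdot\frac{\partial F}{\partial \nabla u}) = \mathcal{G}\frac{\delta\mathcal{F}}{\delta u}$, recovering \eqref{general_gradient_flow}. For the converse I would simply note that if $u$ solves \eqref{general_gradient_flow}, then the choice $v := u$, $q := W(u)$ satisfies all three equations of \eqref{sav_tark_reformulation} together with the initial condition \eqref{sav_tark_initial_condition}.

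I expect the only step with genuine content to be the identity $v = u$, that is, the energy estimate: it is where the structural hypotheses on $\mathcal{L}$ (self-adjoint, positive definite) and on $\mathcal{G}$ (negative semi-definite) are essential, and one must argue carefully that the homogeneous equation $w_t = \mathcal{G}\mathcal{L}w$ with zero initial data has only the trivial solution rather than merely that it is solved by $w=0$. The remaining parts are a direct chain-rule computation and back-substitution. A secondary point requiring care is the differentiation of the square-root functional $W$, which relies on $(F,1)+C>0$ as guaranteed by the choice of $C$ in \eqref{q}, and on the fact that $W$ is spatially constant so that $\frac{1}{2W}$ passes freely through the spatial divergence.
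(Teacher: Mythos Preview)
Your proposal is correct and follows essentially the same approach as the paper: subtract the $u$- and $v$-equations to obtain $w_t = \mathcal{G}\mathcal{L}w$, use the energy $\tfrac12(w,\mathcal{L}w)$ together with self-adjointness and positive-definiteness of $\mathcal{L}$ and negative semi-definiteness of $\mathcal{G}$ to force $w\equiv 0$, then identify $q$ with $W(u)$ via the chain rule and back-substitute. The paper's write-up is terser but the argument is the same.
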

\begin{proof}
    According to the definition of $q$ \eqref{q} and introducing $v(t) = u(t)$, it is evident that the original system \eqref{general_gradient_flow} implies \eqref{sav_tark_initial_condition}. We will now demonstrate that the combination of \eqref{sav_tark_initial_condition} and \eqref{sav_tark_initial_condition} leads to \eqref{general_gradient_flow}. Subtracting the second equation from the first equation of \eqref{sav_tark_reformulation} yields
    \begin{equation*}
        u_t - v_t = \mathcal{G} \big( \mathcal{L} u - \mathcal{L} v \big).
    \end{equation*}
    Taking the discrete inner product with $\mathcal{L} u - \mathcal{L} v$ on both sides of the above equation produces
    \begin{equation*}
        \frac{1}{2}\frac{d}{dt} \big(\mathcal{L}(u - v), u - v \big) = \big(\mathcal{G} \mathcal{L}(u - v), \mathcal{L}(u - v)\big) \leq 0.
    \end{equation*}
   Due to the positive-definite of $\mathcal{L}$ and \eqref{sav_tark_initial_condition}, we conclude that
    \begin{equation}\label{solution_eq}
       u(t) = v(t), \quad \forall \ 0 \leq t \leq T.
    \end{equation}
    Inserting \eqref{solution_eq} into the third equation of \eqref{sav_tark_reformulation}, we obtain
    \begin{equation}\label{qt}
\begin{aligned}
    q_t = \big( \frac{\delta W}{\delta u}[v]  , v \big) + \big( \frac{\delta W}{\delta \nabla u}[v], \nabla v_t \big) = \frac{d W[v]}{dt}.
\end{aligned}
\end{equation}
Combining \eqref{sav_tark_initial_condition}, \eqref{solution_eq}, and \eqref{qt} results in
\begin{equation*}
    q = W[v] = W[u],
\end{equation*}
Finally, it holds from the definition of $W$ that
\begin{equation*}
    2q \frac{\delta W}{\delta u} = \frac{\delta F}{\delta u}, \quad 2q \nabla \cdot \frac{\delta W}{\delta \nabla u} = \nabla \cdot \frac{\delta F}{\delta \nabla u}.
\end{equation*}
Substituting the above results into \eqref{sav_tark_reformulation} yields \eqref{general_gradient_flow}, which completes the proof.
\end{proof}
\begin{rmk}
    The positive-definite of $\mathcal{L}$ is reasonable for most phase field models. For the CH equation with Neumann or periodic boundary conditions, we have $\mathcal{L} = -\Delta$ and $\mathcal{G} = \Delta$. The mass conservation law guarantees the invertibility of $\mathcal{L}$. A similar argument applies to the MBE equation. For the AC equation, we have $\mathcal{L} = -\Delta$ and $\mathcal{G} = -I$. Although $\mathcal{L}$ is only positive semi-definite in this case, we can introduce a stabilized parameter $\kappa$ and equivalently recast the AC equation as
    \begin{equation*}
       u_t = - \big( (\kappa I - \Delta) u - (\kappa u + f(u)) \big) := - \big(\mathcal{L}_\kappa u + f_\kappa (u)\big).
    \end{equation*}
   Then, $\mathcal{L}_\kappa = \kappa I - \Delta$ is positive definite.
\end{rmk}
\begin{rmk}
    The extension of \eqref{general_gradient_flow} results in a more complex system \eqref{sav_tark_reformulation}. However, this reformulation provides an elegant platform for developing high-order, linearly implicit, and energy-stable schemes, as will be demonstrated in subsequent contexts. It should be noted that the equivalent reformulation of \eqref{general_gradient_flow} is not unique, and other similar reformulations can be employed to develop numerical schemes through the frameworks described in this paper. For simplicity, we only consider \eqref{sav_tark_reformulation} in this section.
\end{rmk}
\begin{rmk}
    System \eqref{sav_tark_reformulation} is an extension of the original SAV approach \eqref{sav_reformulation} proposed in \cite{sav_shen}. Some other SAV approaches have recently gained popularity, including the exponential SAV approach \cite{ESAV} and the generalized SAV approach \cite{GSAV1, GSAV2}. In \cite{ESAV_AC}, Ju et al. have also introduced a novel exponential SAV approach to preserve both MBP and EDL for the AC equations. These approaches can also be extended similarly to \eqref{sav_tark_reformulation} and discretized by the methods outlined in subsequent contexts to obtain high-order and energy-stable schemes. For simplicity, we will only use the original SAV approach for illustrations.
\end{rmk}
Assuming that $u^n$, $v^n$, and $q^n$ are already determined. The SAV-ARK methods are outlined below:
\begin{alg}[SAV-ARK]\label{ark_sav_alg}
    The intermediate variables $v_{ni}, u_{ni}$, and $q_{ni}$ are solved from
\begin{equation}\label{ark_sav_intermediate}
\left\lbrace
\begin{aligned}
    v_{ni} &= v^n + \tau \sum\limits_{j=0}^{s-1} (a_{ij} \dot{v}^{\mathcal{L}}_{nj} +\widehat{a}_{ij} \dot{v}^\mathcal{N}_{nj}) , \\
    u_{ni} &= u^n + \tau \sum\limits_{j=0}^{s-1} a_{ij} \dot{u}_{nj}, \ q_{ni} = q^n + \tau\sum\limits_{j=0}^{s-1} a_{ij} \dot{q}_{nj}, \\
    \dot{v}^\mathcal{L}_{ni} &= \mathcal{G} \mathcal{L} v_{ni}, \ \dot{v}^\mathcal{N}_{ni} = \mathcal{G} ( 2q_{ni} \frac{\delta W}{\delta u}[v_{ni}] - 2q_{ni} \nabla \cdot \frac{\delta W}{\delta \nabla u}[v_{ni}] ), \\
    \dot{u}_{ni} &= \mathcal{G} ( \mathcal{L} u_{ni} + 2q_{ni} \frac{\delta W}{\delta u}[v_{ni}] - 2q_{ni} \nabla \cdot \frac{\delta W}{\delta \nabla u}[v_{ni}] ), \\
    \dot{q}_{ni} &=  (\frac{\delta W}{\delta u}[v_{ni}], \dot{u}_{ni} ) + ( \frac{\delta W}{\delta \nabla u}[v_{ni}], \nabla \dot{u}_{ni} ).
\end{aligned}
\right.
\end{equation}
Then, the solution at $t_{n+1}$ is
\begin{equation}\label{ark_sav_next}
    v^{n+1} = v^n + \tau \sum\limits_{i=0}^{s-1} b_i(\dot{v}_{ni}^\mathcal{L} + \dot{v}_{ni}^\mathcal{N}), \
    u^{n+1} = u^n + \tau \sum\limits_{i=0}^{s-1} b_i \dot{u}_{ni}, \
    q^{n+1} = q^n + \tau \sum\limits_{i=0}^{s-1} b_i \dot{q}_{ni}.
\end{equation}
\end{alg}
	We note here that linearly implicit schemes can be obtained by carefully choosing the RK coefficients in Algorithm \ref{ark_sav_alg}. One effective method is discretizing $u$ and $q$ with DIRK methods and $v$ with ERK methods. These methods will be referred to as SAV-DIARK methods in the subsequent contexts.

It is important to emphasize that by introducing $\bm{z} = (v, u, q)^\mathrm{T}$, Algorithm \ref{ark_sav_alg} can be regraded as ARK methods as follows:
    \begin{equation*}
    \begin{aligned}
        \bm{z}_{ni} &= \bm{z}^n + \tau \sum\limits_{j=0}^{s-1} (a_{ij} \Phi(\bm{z}_{nj}) + \widehat{a}_{ij} \Psi(\bm{z}_{nj}) ), \\
        \bm{z}^{n+1} &= \bm{z}^n + \tau \sum\limits_{i=0}^{s-1} b_i ( \Phi(\bm{z}_{ni}) +  \Psi(\bm{z}_{ni}) ), \\
    \end{aligned}
    \end{equation*}
    where 
    \begin{equation*}
    \begin{aligned} 
        \Phi(\bm{z}) =
        \left(
        \begin{smallmatrix}
            \mathcal{G}\mathcal{L} u \\ 
            \mathcal{G}\big( \mathcal{L}u + 2q \frac{\delta W}{\delta u}[v] - 2q \nabla \cdot \frac{\delta W}{\delta \nabla u}[v]\big) \\ 
            \big( \frac{\delta W}{\delta u}[v], \dot{u} \big) + \big( \frac{\delta W}{\delta \nabla u}, \nabla \dot{u}\big)
        \end{smallmatrix} \right), \
         \Psi(\bm{z}) = 
        \left(
        \begin{smallmatrix}
            \mathcal{G} \big( 2q \frac{\delta W}{\delta u}[v] - 2q \nabla \cdot \frac{\delta W}{\delta \nabla u}[v] \big) \\
            0 \\ 
            0
        \end{smallmatrix} \right). \\
    \end{aligned}
    \end{equation*}
This allows us to easily derive the order conditions of the proposed schemes by the order conditions of ARK methods.

To further simplify and improve the stability of Algorithm \ref{ark_sav_alg}, we introduce the following modified SAV-ARK (SAV-MARK) scheme.
\begin{alg}[SAV-MARK]\label{mark_sav_alg}
    The intermediate variables $v_{ni}, u_{ni},  q_{ni}$ are solved from
\begin{equation}\label{mark_sav_intermediate}
\left\lbrace
\begin{aligned}
    v_{ni} &= u^n + \tau \sum\limits_{j=0}^{s-1} ( a_{ij} \dot{v}^\mathcal{L}_{nj} + \widehat{a}_{ij} \dot{v}^\mathcal{N}_{nj}) , \\
   u_{ni} &= u^n + \tau \sum\limits_{j=0}^{s-1} a_{ij} \dot{u}_{nj}, \ q_{ni} = q^n + \sum\limits_{j=0}^{s-1} a_{ij} \dot{q}_{nj}, \\
    \dot{v}^\mathcal{L}_{ni} &= \mathcal{G} \mathcal{L} v_{ni}, \ \dot{v}^\mathcal{N}_{ni} = \mathcal{G}( 2q_{ni} \frac{\delta W}{\delta u}[v_{ni}] - 2q_{ni} \nabla \cdot \frac{\delta W}{\delta \nabla u}[v_{ni}]), \\
    \dot{u}_{ni} &= \mathcal{G} ( \mathcal{L} u_{ni} + 2q_{ni} \frac{\delta W}{\delta u}[v_{ni}] - 2q_{ni} \nabla \cdot \frac{\delta W}{\delta \nabla u}[v_{ni}] ), \\
    \dot{q}_{ni} &=  \big(\frac{\delta W}{\delta u}[v_{ni}], \dot{u}_{ni} \big) + \big( \frac{\delta W}{\delta \nabla u}[v_{ni}], \nabla \dot{u}_{ni} \big).
\end{aligned}
\right.
\end{equation}
Then, the solution at $t_{n+1}$ is
\begin{equation}\label{mark_sav_next}
    u^{n+1} = u^n + \tau \sum\limits_{i=0}^{s-1} b_i \dot{u}_{ni}, \
    q^{n+1} = q^n + \tau \sum\limits_{i=0}^{s-1} b_i \dot{q}_{ni}.
\end{equation}
\end{alg}
In contrast to Algorithm \ref{ark_sav_alg}, Algorithm \ref{mark_sav_alg} does not require updating the variable $v$ at integer time steps. This modification not only reduces computational costs but also improves the stability of the scheme in practice. Additionally, thanks to \eqref{solution_eq}, this modification does not affect the accuracy of Algorithm \ref{ark_sav_alg}. 

\subsection{Energy stability and solvability}
\begin{thm}\label{thm-energy_stable}
    Suppose the RK methods employed on $u$ in Algorithms \ref{ark_sav_alg} and \ref{mark_sav_alg} are algebraically stable. Then, SAV-ARK and SAV-MARK methods are unconditionally energy-stable in the sense
    \begin{equation*}
        \mathcal{F}(u^{n+1}, q^{n+1}) \leq \mathcal{F}(u^n, q^n), \quad 0 \leq n \leq N_t - 1.
    \end{equation*}
\end{thm}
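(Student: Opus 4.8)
The plan is to reduce the energy difference to a manifestly nonpositive quantity by exploiting (i) the classical algebraic-stability quadratic identity applied to the $u$- and $q$-updates, and (ii) the coupling structure of the SAV reformulation. Since the modified energy \eqref{eq-quadratic_energy} reads $\mathcal{F}(u,q)=\tfrac12(u,\mathcal{L}u)+q^2-C$, and since $u$ and $q$ are advanced by the \emph{same} implicit table $(A,b)$ in both Algorithms \ref{ark_sav_alg} and \ref{mark_sav_alg}, I would treat the two algorithms simultaneously: the variable $v$ never enters $\mathcal{F}$, so the only difference between them (the base point $v^n$ versus $u^n$ for the $v$-stages) is irrelevant to the energy estimate.

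First I would establish, for a quadratic functional advanced by a table $(A,b)$ with stages $Y_{ni}=y^n+\tau\sum_j a_{ij}\dot y_{nj}$, the identity $\|y^{n+1}\|^2-\|y^n\|^2 = 2\tau\sum_i b_i(\dot y_{ni},Y_{ni}) - \tau^2\sum_{i,j}M_{ij}(\dot y_{ni},\dot y_{nj})$, obtained by inserting $y^n=Y_{ni}-\tau\sum_j a_{ij}\dot y_{nj}$ into $\|y^{n+1}\|^2-\|y^n\|^2=(y^{n+1}-y^n,\,y^{n+1}+y^n)$ and symmetrizing, where $M_{ij}=b_ia_{ij}+b_ja_{ji}-b_ib_j$. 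I apply this twice: to $q$ in the ordinary scalar product, and to $u$ in the $\mathcal{L}$-weighted inner product $(\cdot,\mathcal{L}\cdot)$, which is a genuine inner product because $\mathcal{L}$ is self-adjoint and positive definite. Writing $w_{ni}=\mathcal{L}^{1/2}\dot u_{ni}$, the $M$-term becomes $\sum_{i,j}M_{ij}(w_{ni},w_{nj})$; diagonalizing the positive semi-definite matrix $M$ as $\sum_k\lambda_k v_kv_k^{\mathrm T}$ with $\lambda_k\ge0$ shows this equals $\sum_k\lambda_k\|\sum_i(v_k)_i w_{ni}\|^2\ge0$, and likewise for $q$. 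Hence both $M$-terms carry the correct sign and may be discarded, leaving $\mathcal{F}(u^{n+1},q^{n+1})-\mathcal{F}(u^n,q^n)\le \tau\sum_i b_i(\dot u_{ni},\mathcal{L}u_{ni})+2\tau\sum_i b_i q_{ni}\dot q_{ni}$.

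The crux is then a stagewise cancellation. Introducing the discrete chemical potential $\mu_{ni}:=\mathcal{L}u_{ni}+2q_{ni}\frac{\delta W}{\delta u}[v_{ni}]-2q_{ni}\nabla\cdot\frac{\delta W}{\delta\nabla u}[v_{ni}]$, so that $\dot u_{ni}=\mathcal{G}\mu_{ni}$, I solve for $\mathcal{L}u_{ni}$ and integrate by parts, $(\dot u_{ni},\nabla\cdot\frac{\delta W}{\delta\nabla u}[v_{ni}])=-(\nabla\dot u_{ni},\frac{\delta W}{\delta\nabla u}[v_{ni}])$, using the imposed boundary conditions. This gives $(\dot u_{ni},\mathcal{L}u_{ni})=(\dot u_{ni},\mu_{ni})-2q_{ni}\big[(\frac{\delta W}{\delta u}[v_{ni}],\dot u_{ni})+(\frac{\delta W}{\delta\nabla u}[v_{ni}],\nabla\dot u_{ni})\big]$, and the bracket is precisely $\dot q_{ni}$ by its defining relation in the stage equations \eqref{ark_sav_intermediate}. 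Substituting back, the two surviving sums collapse to $\tau\sum_i b_i(\dot u_{ni},\mu_{ni})=\tau\sum_i b_i(\mathcal{G}\mu_{ni},\mu_{ni})$.

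Finally, because $\mathcal{G}$ is negative semi-definite each summand satisfies $(\mathcal{G}\mu_{ni},\mu_{ni})\le0$, and because algebraic stability guarantees $b_i\ge0$, the whole expression is $\le0$, which yields $\mathcal{F}(u^{n+1},q^{n+1})\le\mathcal{F}(u^n,q^n)$. The main obstacle is the exact cancellation of the cross terms in the previous paragraph: it works only because $u$ and $q$ are propagated with identical coefficients $a_{ij},b_i$ (so that the factor $2\tau b_i q_{ni}\dot q_{ni}$ produced by the $u$-estimate matches the one retained from the $q$-estimate), and because the $\dot q_{ni}$ relation reproduces the SAV chain rule exactly; I would verify carefully that no residual $\tau^2$ or boundary contributions survive. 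The positive semi-definiteness argument for the $\mathcal{L}$-weighted $M$-term is the only other delicate point, and it is handled by the $\mathcal{L}^{1/2}$ substitution above.
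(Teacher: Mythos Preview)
Your proposal is correct and follows essentially the same approach as the paper: both arguments use the algebraic-stability quadratic identity to bound $\tfrac12(u^{n+1},\mathcal{L}u^{n+1})-\tfrac12(u^n,\mathcal{L}u^n)$ and $(q^{n+1})^2-(q^n)^2$ by the stage sums $\tau\sum_i b_i(\dot u_{ni},\mathcal{L}u_{ni})$ and $2\tau\sum_i b_i q_{ni}\dot q_{ni}$, and then verify the stagewise identity $(\dot u_{ni},\mathcal{L}u_{ni})+2q_{ni}\dot q_{ni}=(\mathcal{G}\mu_{ni},\mu_{ni})\le0$ to conclude via $b_i\ge0$. Your explicit use of $\mathcal{L}^{1/2}$ and the spectral decomposition of $M$ to justify nonnegativity of the $M$-term, and your explicit integration-by-parts step, spell out details the paper leaves implicit, but the logical skeleton is identical.
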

\begin{proof}
    By the definition of \eqref{ark_sav_next} and the self-adjointness of $\mathcal{L}$, we can derive
    \begin{equation*}
        \frac{1}{2} (u^{n+1}, \mathcal{L} u^{n+1}) - \frac{1}{2}(u^n, \mathcal{L} u^n) 
        = \tau  \sum\limits_{i=0}^{s-1} b_i (\dot{u}_{ni}, \mathcal{L} u^n) 
        + \frac{\tau^2}{2} \sum\limits_{i=0}^{s-1}\sum\limits_{j=0}^{s-1}  b_ib_j (\dot{u}_{ni}, \mathcal{L} \dot{u}_{nj}).
    \end{equation*}
    Substituting $u^n = u_{ni} - \tau \sum\limits_{j=0}^{s-1}a_{ij} \dot{u}_{nj}$ into the 
    above equation and observing that
    \begin{equation*}
        \sum\limits_{i = 0}^{s-1} \sum\limits_{j = 0}^{s-1} b_i a_{ij}(\dot{u}_{ni}, \mathcal{L} \dot{u}_{nj}) = \sum\limits_{i=0}^{s-1} \sum\limits_{j=0}^{s-1} b_ja_{ji}(\dot{u}_{ni}, \mathcal{L} \dot{u}_{nj}),
    \end{equation*}
    we obtain
    \begin{equation}\label{edl1}
    \begin{aligned}
        &\frac{1}{2} (u^{n+1}, \mathcal{L}u^{n+1}) - \frac{1}{2}(u^n, \mathcal{L} u^n) \\
        &\quad= \tau \sum\limits_{i=0}^{s-1} b_i (\dot{u}_{ni}, \mathcal{L}u_{ni}) - \frac{\tau^2}{2} \sum\limits_{i = 0}^{s-1} \sum\limits_{j=0}^{s-1} M_{ij} (\dot{u}_{ni}, \mathcal{L}\dot{u}_{nj}) \\ 
        &\quad \leq \tau \sum\limits_{i=0}^{s-1} b_i (\dot{u}_{ni}, \mathcal{L}u_{ni}).
    \end{aligned}
    \end{equation}
    The last inequality is a result of the positive definiteness of $M$ and $\mathcal{L}$. Using a similar procedure, we have
    \begin{equation}\label{edl2}
        (q^{n+1})^2 - (q^n)^2 \leq 2\tau \sum\limits_{i=0}^{s-1} b_i  q_{ni} \dot{q}_{ni}.
    \end{equation}
    Taking the discrete inner products of the sixth and last equations of \eqref{ark_sav_intermediate} with $\mathcal{L}u_{ni}$ and $2q_{ni}$, respectively, and adding the obtained results together yield
    \begin{equation}\label{edl3}
    \begin{aligned}
        (\dot{u}_{ni}, \mathcal{L}u_{ni}) + 2 q_{ni} \dot{q}_{ni} = \left(\mathcal{G}\mu_{ni}, \mu_{ni} \right) \leq 0,
    \end{aligned}
    \end{equation}
    where $\mu_{ni}  = \mathcal{L} u_{ni} + 2q_{ni} \frac{\delta W}{\delta u}[v_{ni}] - 2q_{ni} \nabla \cdot \frac{\delta W}{\delta \nabla u}[v_{ni}]$. The desired result is thus obtained by combining \eqref{edl1}--\eqref{edl3} with the condition $b_i \geq 0$.
\end{proof}
\begin{rmk}
    The proposed approach uses quadratic energy (as depicted in equation \eqref{eq-quadratic_energy}) instead of the original one. When higher-order time discretization is applied to \eqref{sav_tark_reformulation}, the resulting quadratic energy becomes a high-order approximation of the original energy. Although the SAV method may be criticized for this weakness, recent studies have attempted to overcome it. For example, Jiang et al. introduced the relaxed SAV approach in \cite{relaxation_sav} to connect the modified and original energy at a discrete level, \cite{relaxation_lag} proposed an alternating approach that combines the SAV and Lagrange multiplier methods to preserve the original energy. Our technique can also be utilized to develop higher-order schemes based on these approaches.
\end{rmk}
\begin{rmk}
    It should be noted that Theorem \ref{thm-energy_stable} guarantees the boundedness of the numerical solutions $\{u^n\}_{n=0}^{N_t}$ under the energy norm $\|\cdot\|_\mathcal{L}$, where $\|u\|_\mathcal{L} := (\mathcal{L}u, u)$. However, the solutions $\{v^n\}_{n=0}^{N_t}$ obtained from Algorithm \ref{ark_sav_alg} may not be bounded. Hence, Algorithm \ref{mark_sav_alg} is expected to be more stable in practical applications since it does not involve the update of $v^n$.
\end{rmk}

Let us now concentrate on the solvability of SAV-MDIARK methods. Notice that the proof for SAV-DIARK methods is similar, and we omit it here.

\begin{thm}\label{thm-solvability}
    Assume that the mobility matrix satisfies $\mathcal{G} = - \mathcal{B}^* \mathcal{B}$ and the RK coefficients $a_{ii} \geq 0$ in Algorithm \ref{mark_sav_intermediate}. The semi-discrete SAV-MDIARK scheme is then uniquely solvable when the time step is sufficiently small. Here, $\mathcal{B}$ is a linear operator, $\mathcal{B}^\star$ represents its adjoint.
\end{thm}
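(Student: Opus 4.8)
The plan is to exploit the diagonally–implicit/explicit structure of SAV-MDIARK, which lets the stages be resolved one at a time, so that solvability reduces to a finite sequence of \emph{well-posed linear solves} rather than to a genuinely coupled nonlinear system. Proceeding by induction on the stage index $i$, assume the quantities at stages $j<i$ are already determined. Because the nonlinear part of $v$ is treated explicitly ($\widehat{a}_{ij}=0$ for $j\ge i$), the equation for $v_{ni}$ reads $(I-\tau a_{ii}\mathcal{G}\mathcal{L})\,v_{ni}=r_{ni}$ with a \emph{known} right-hand side $r_{ni}$. First I would establish that this operator is boundedly invertible: testing against $\mathcal{L}\phi$ and using $\mathcal{G}=-\mathcal{B}^*\mathcal{B}$ produces the symmetric bilinear form $(\mathcal{L}^{1/2}v_{ni},\mathcal{L}^{1/2}\phi)+\tau a_{ii}(\mathcal{B}\mathcal{L}v_{ni},\mathcal{B}\mathcal{L}\phi)$, which is coercive when $a_{ii}\ge 0$ thanks to the positive-definiteness of $\mathcal{L}$, so Lax--Milgram yields a unique $v_{ni}$. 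Consequently $\frac{\delta W}{\delta u}[v_{ni}]$ and $\frac{\delta W}{\delta \nabla u}[v_{ni}]$ become fixed data, and I abbreviate $\xi_{ni}:=\frac{\delta W}{\delta u}[v_{ni}]-\nabla\cdot\frac{\delta W}{\delta \nabla u}[v_{ni}]$.

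The crucial observation is that, once $v_{ni}$ (hence $\xi_{ni}$) is frozen, the remaining equations for $(u_{ni},q_{ni})$ are \emph{linear}: $\dot{u}_{ni}=\mathcal{G}(\mathcal{L}u_{ni}+2q_{ni}\xi_{ni})$ is affine in $q_{ni}$, and $\dot{q}_{ni}=(\xi_{ni},\dot{u}_{ni})$ after integration by parts. I would then eliminate the infinite-dimensional unknown $u_{ni}$ in favor of the scalar $q_{ni}$. Writing $u_{ni}=R_{ni}+\tau a_{ii}\dot{u}_{ni}$ with $R_{ni}$ known gives $(I-\tau a_{ii}\mathcal{G}\mathcal{L})\,u_{ni}=R_{ni}+2\tau a_{ii}\,q_{ni}\,\mathcal{G}\xi_{ni}$, so with $P:=I-\tau a_{ii}\mathcal{G}\mathcal{L}$ invertible (the same argument as for $v_{ni}$) one obtains $u_{ni}=P^{-1}R_{ni}+2\tau a_{ii}\,q_{ni}\,P^{-1}\mathcal{G}\xi_{ni}$. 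Substituting this into $q_{ni}=R_q+\tau a_{ii}\dot{q}_{ni}$ collapses the entire stage to a single scalar equation $K\,q_{ni}=(\text{known})$, where $K=1-2\tau a_{ii}(\xi_{ni},\mathcal{G}\xi_{ni})-2(\tau a_{ii})^2(\xi_{ni},\mathcal{G}\mathcal{L}P^{-1}\mathcal{G}\xi_{ni})$.

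The main obstacle is precisely showing $K\ne 0$, i.e.\ that this $1\times1$ Schur complement of the coupled stage map does not degenerate. I would observe that $-2\tau a_{ii}(\xi_{ni},\mathcal{G}\xi_{ni})=2\tau a_{ii}\|\mathcal{B}\xi_{ni}\|^2\ge 0$, while the final term is $O(\tau^2)$ since $P^{-1}\to I$ as $\tau\to 0$; hence $K=1+O(\tau)$ and $K>0$ for all sufficiently small $\tau$, which determines $q_{ni}$ uniquely and then $u_{ni}$ from the formula above, completing the induction over $i$ and over $n$. A cleaner, step-size-independent route that I would record as a remark is to test the homogeneous stage system ($R_{ni}=0$, $R_q=0$) with $\mu_{ni}:=\mathcal{L}u_{ni}+2q_{ni}\xi_{ni}$ and evaluate $(\mu_{ni},\mathcal{G}\mu_{ni})$ in two ways: directly it equals $-\|\mathcal{B}\mu_{ni}\|^2\le 0$, whereas expanding $\mu_{ni}$ and using $\dot{u}_{ni}=u_{ni}/(\tau a_{ii})$, $\dot{q}_{ni}=q_{ni}/(\tau a_{ii})$ gives $\tfrac{1}{\tau a_{ii}}\big((\mathcal{L}u_{ni},u_{ni})+2q_{ni}^2\big)\ge 0$. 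Both sides must therefore vanish, forcing $u_{ni}=0$ and $q_{ni}=0$; this proves nonsingularity of the stage map directly and shows that the small-step restriction is only a convenient sufficient condition. The delicate feature throughout is the bidirectional coupling between the PDE-valued $u_{ni}$ and the scalar $q_{ni}$, which the reduction to the single number $K$ is designed to untangle.
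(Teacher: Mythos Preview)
Your proposal is correct and follows essentially the same route as the paper: induction over the stages, unique solvability of $v_{ni}$ via coercivity of $I-\tau a_{ii}\mathcal{G}\mathcal{L}$ (the paper phrases this as minimization of a strictly convex quadratic functional rather than Lax--Milgram), and then reduction of the coupled $(u_{ni},q_{ni})$ system to a single scalar equation whose coefficient is $1+O(\tau)$ and hence nonzero for small $\tau$. Your closing energy-based remark---testing the homogeneous stage system against $\mu_{ni}$ to force $(\mathcal{L}u_{ni},u_{ni})+2q_{ni}^2=0$---actually goes beyond the paper, since it yields uniqueness for \emph{all} $\tau>0$ whenever $a_{ii}>0$, whereas the paper stops at the small-step argument.
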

\begin{proof}
    Since we are considering the DIRK method, the scheme to solve the intermediate variable $v_{ni}$ can be reformulated as follows:
    \begin{equation*}
        v_{ni} = u^n + \tau a_{ii} \mathcal{G} \mathcal{L} v_{ni} + \tau\sum\limits_{j=0}^{i-1} (a_{ij} \dot{v}_{nj}^\mathcal{L} + \widehat{a}_{ij} \dot{v}_{nj}^\mathcal{N}).
    \end{equation*}
    Notably, we can solve the above system one by one for $i$ from 0 to $s-1$, where the only unknown in each step is $v_{ni}$. Combining the self-adjoint of $\mathcal{L}$ and the assumption to $\mathcal{G}$, it is readily to assert the decomposition $\mathcal{G}\mathcal{L} = -\mathcal{A}^\star \mathcal{A}$. Therefore, the solution of $v_{ni}$ can be regarded as the minimization of the convex functional defined by:
    \begin{equation*}
        \mathcal{S}[v] = \frac{1}{2} (\|v\|^2 + \tau a_{ii}\|\mathcal{A} v\|^2) - \big(u^n + \tau \sum\limits_{j=0}^{i-1} (a_{ij} \dot{v}_{nj}^\mathcal{L} + \widehat{a}_{ij} \dot{v}_{nj}^\mathcal{N}), v\big).
    \end{equation*}
    Therefore, the unique solvability of $v_{ni}$ is straightforward. Then, we prove the solvability of the system coupled by $u_{ni}$ and $q_{ni}$. Let $f_{ni} = \frac{\delta W}{\delta u}[v_{ni}] - \nabla \cdot \frac{\delta W}{\delta \nabla u_{ni}}[v_{ni}]$. Thanks to the factor that $q_{ni}$ is in dependent of space, it can be updated by 
    \begin{equation*}
    	q_{ni} = \frac{q^n + \tau \sum\limits_{j=0}^{i-1}a_{ij} \dot{q}_{nj} + \tau a_{ii} (\mathcal{A}f_{ni}, \mathcal{A} u^1_{ni} ) }{1 + 2\tau a_{ii} \| \mathcal{B} f_{ni} \|^2 - \tau a_{ii} (\mathcal{A}f_{ni}, \mathcal{A} u_{ni}^2) },
    \end{equation*}
 	where $u^1_{ni}$ and $u_{ni}^2$ are defined by 
 	\begin{equation*}
 	\begin{aligned}
 		u_{ni}^1 &= \mathop{ \rm argmin }_{u} \frac{1}{2} (\|v\|^2 + \tau a_{ii}\|\mathcal{A} v\|^2) - \tau (u^n + \sum\limits_{j=0}^{i-1} a_{ij} \dot{u}_{nj}, u ), \\ 
 		u_{ni}^2 &= \mathop{ \rm argmin }_{u} \frac{1}{2} (\|v\|^2 + \tau a_{ii}\|\mathcal{A} v\|^2) - 2\tau a_{ii} (\mathcal{G} f_{ni}, u ). \\ 
 	\end{aligned}
 	\end{equation*}
    Since the time step is supposed to be sufficiently small, the solvability of the system can be straightforward.
\end{proof}
\section{Theoretical analysis}
\subsection{Estimates of the global error}
In this section, we present global error estimates for the semi-discrete SAV-MARK methods. To simplify the presentation, we consider only the classical $L^2$ gradient flow, i.e., $\mathcal{G} = -1$, $\mathcal{L} = -\Delta$ and 
\begin{equation*}
    \mathcal{F}(u) = \frac{1}{2} \|\nabla u\|^2 + \int_\Omega F(u) d\mathbf{x}.
\end{equation*}
Without loss of generality, our subsequent analysis is based on the following assumptions $\mathcal{A}1$--$\mathcal{A}3$:
	\begin{itemize}
		\item[$\mathcal{A}1:$] The implicit component of the ARK method is algebraically and diagonally stable.

		\item[$\mathcal{A}2:$] The exact solution of the system is sufficiently smooth in both space and time.
		\item[$\mathcal{A}3:$] The nonlinearity $F(\cdot)$ is twice differentiable.
	\end{itemize}
The SAV-MARK scheme for the AC equation is given by
\begin{equation}\label{err_01}
	\begin{aligned}
        v_{ni} = u^n + \tau \sum\limits_{j = 0}^{s-1} (a_{ij} \Delta v_{nj} - 2 \widehat{a}_{ij} q_{nj} W^\prime(v_{nj})), \
		u_{ni} = u^n + \tau \sum\limits_{j=0}^{s-1} a_{ij} \dot{u}_{nj}, \\
		q_{ni} =  q^n + \tau \sum\limits_{j=0}^{s-1} a_{ij} \dot{q}_{nj}, \ 
		u^{n+1} = u^n + \sum\limits_{i=0}^{s-1} b_i \dot{u}_{ni}, \ q^{n+1} = q^n + \sum\limits_{i=0}^{s-1} b_i \dot{q}_{ni},
	\end{aligned}
\end{equation}
where 
\begin{equation*}
    \dot{u}_{ni} = \Delta u_{ni} - 2 q_{ni} W^\prime(v_{ni}) , \ \dot{q}_{ni} = (W^\prime(v_{ni}), \dot{u}_{ni}), \ W^\prime(u) = \frac{F^\prime(u)}{ 2 \sqrt{ \int_\Omega F(u) dx + C_0 } }.
\end{equation*}
The major obstacle in establishing error estimates for the SAV-MARK method	 is obtaining a prior $L^\infty$ bound for the intermediate stages $v_{ni}$. To address this issue, previous researches truncated the nonlinearity to a global Lipschitz function with compact support. This technique is reliable when the continuous solution is bounded, and the numerical solution is sufficiently close to it. Here, we will adopt a similar approach. Let $U(\mathbf{x}, t)$ be the exact solution to the $L^2$ gradient flow and $Q(t) = \sqrt{ \int_\Omega F(U(\mathbf{x}, t)) d\mathbf{x} + C }$. We define
\begin{equation}
    M_u = \|U(\mathbf{x}, t)\|_{C([0, T]; L^\infty (\Omega))}, \quad \dot{M}_u = \|\dot{U}(\mathbf{x}, t)\|_{C([0, T]; L^\infty(\Omega))}, \quad M_q = \max\limits_{0 \leq t \leq T}|Q(t)|.
\end{equation}
The constants provided above are well-defined by the assumption $\mathcal{A}2$ and the definition of $Q(t)$. We denote by $\mathcal{B} = M_u + 1$ and let
\begin{equation*}
    W^\prime_\mathcal{B}(s) = W^\prime(s) \rho(s/\mathcal{B}),
\end{equation*}
where $\rho(s)$ is a smooth function with compact support, such that
\begin{equation*}
    \rho(s) =  
    \left\lbrace 
    \begin{aligned}
        & 1, & 0 \leq |s| \leq 1, \\ 
        & \in [0, 1], & 1 \leq |s| \leq 2,  \\
        & 0, & |s| \geq 2.
    \end{aligned}
    \right.
\end{equation*}
It is readily to confirm that $W^\prime_\mathcal{B}(\cdot)$ is global Lipschitz continuous, and 
\begin{equation*}
\begin{aligned}
    & W^\prime_\mathcal{B} (s) = W^\prime(s), \quad \ \forall \ 0 \leq |s| \leq \mathcal{B}, \\ 
    & |W^\prime_\mathcal{B}(s)| \leq L_1, \quad | W^\prime_\mathcal{B} (r)  -  W^\prime_\mathcal{B} (s) | \leq L_2 |r - s|.
\end{aligned}
\end{equation*}
Following \cite{du_jsc_analysis,ju_jcp_if,tan_jcp_msrksav}, we introduce reference solutions $\mathcal{V}_{ni}$, $\mathcal{U}_{ni}$, $\mathcal{Q}_{ni}$, $\mathcal{U}^n$ and $\mathcal{Q}^n$, such that
\begin{equation}\label{err_02}
	\begin{aligned}
        \mathcal{V}_{ni} &= U(t_n) + \tau \sum\limits_{j = 0}^{s-1} (a_{ij} \Delta \mathcal{V}_{nj} - 2 \widehat{a}_{ij} \mathcal{Q}_{nj} W^\prime_\mathcal{B} (\mathcal{V}_{nj})), \\ 
        \mathcal{U}_{ni} &= U(t_n) + \tau \sum\limits_{j=0}^{s-1} a_{ij} \dot{\mathcal{U}}_{nj}, \ \dot{\mathcal{U}}_{ni} = \Delta \mathcal{U}_{nj} - 2 \mathcal{Q}_{nj} W^\prime_\mathcal{B}(\mathcal{V}_{nj}), \\ 
        \mathcal{Q}_{ni} &=  Q(t_n) + \tau \sum\limits_{j=0}^{s-1} a_{ij} \dot{\mathcal{Q}}_{nj}, \ \dot{\mathcal{Q}}_{ni} = (W^\prime_\mathcal{B}(\mathcal{V}_{ni}), \dot{\mathcal{U}}_{ni}). \\ 
	\end{aligned}
\end{equation}
These reference solutions play important roles in obtaining global estimates for the SAV-MARK methods. 
\begin{lem}\label{err_lem_01}
    Suppose that the time step satisfies 
    \begin{equation*}
    \tau \leq \min \{ (2c_2)^{-1},  (4c(c_3 + c_4))^{-1} \},
    \end{equation*}
 where the constants above will be specified in the subsequent derivations. We have the following estimates for the intermediate solutions  $\mathcal{V}_{ni}$ 
	\begin{equation*}
        \|\mathcal{V}_{ni}\|_{L^\infty} \leq M_u + \frac{1}{2}, \quad  0 \leq n \leq N_t, \ 0 \leq i \leq s-1.
	\end{equation*}
	Moreover,
	\begin{equation*}
	\begin{aligned}
		& \sum\limits_{i= 0}^{s-1} (|Q(t_{ni}) - \mathcal{Q}_{ni}| + \| U(t_{ni}) - \mathcal{V}_{ni} \| + \| U(t_{ni}) - \mathcal{U}_{ni} \|) \leq c_3 \tau^2, \\ 
		& \sum\limits_{i=0}^{s-1} \|\Delta (U(t_{ni}) - \mathcal{V}_{ni})\| \leq c_4 \tau.
	\end{aligned}
	\end{equation*}
\end{lem}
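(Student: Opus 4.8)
The plan is to regard the three estimates as a uniform-in-$n$ consistency analysis of a single time step: because the reference stages $\mathcal{V}_{ni}$, $\mathcal{U}_{ni}$, $\mathcal{Q}_{ni}$ are all launched from the \emph{exact} data $U(t_n)$, $Q(t_n)$, there is no temporal error accumulation, and the constants $c_3$, $c_4$ will be uniform over $[0,T]$ thanks to the smoothness in $\mathcal{A}2$ and the definitions of $M_u$, $\dot{M}_u$, $M_q$. I would introduce the stage errors $e^v_{ni}=\mathcal{V}_{ni}-U(t_{ni})$, $e^u_{ni}=\mathcal{U}_{ni}-U(t_{ni})$ and $e^q_{ni}=\mathcal{Q}_{ni}-Q(t_{ni})$ with $t_{ni}=t_n+c_i\tau$, and establish all three estimates simultaneously by induction on the stage index $i$, using the lower-triangular (DIRK) structure of the implicit tableau so that stage $i$ is determined by stages $j<i$ together with its own nonnegative diagonal coefficient $a_{ii}$.

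The first step is to record the stage defects produced by inserting the exact solution into \eqref{err_02}. Since $U$ solves the $L^2$ gradient flow, it satisfies the continuous SAV relations $2Q\,W^\prime(U)=F^\prime(U)$, $\dot U=\Delta U-2Q\,W^\prime(U)$ and $\dot Q=(W^\prime(U),\dot U)$; a Taylor expansion around $t_n$ then shows that each defect is $O(\tau^2)$ in $L^2$. Here the internal consistency of the ARK method, $\sum_j a_{ij}=\sum_j\widehat{a}_{ij}=c_i$, is essential: it is exactly what cancels the otherwise $O(\tau)$ mismatch between the weights $a_{ij}$ and $\widehat{a}_{ij}$ acting on $Q\,W^\prime(U)$ in the $\mathcal{V}$ equation, so that $\mathcal{V}_{ni}$ is a genuinely second-order predictor rather than a first-order one. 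Subtracting the defect relations from \eqref{err_02} then gives the error recursions.

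The core of the argument is the estimate for $e^v_{ni}$. The DIRK structure turns stage $i$ into a resolvent equation $(I-\tau a_{ii}\Delta)e^v_{ni}=R_{ni}$, where $R_{ni}$ collects the defect of size $O(\tau^2)$, the lower-stage contributions $\tau\sum_{j<i}a_{ij}\Delta e^v_{nj}$, and the nonlinear differences. The nonlinear differences are controlled \emph{without} any a priori $L^\infty$ bound on $\mathcal{V}_{ni}$, which is the whole purpose of the truncation: splitting $\mathcal{Q}_{nj}W^\prime_\mathcal{B}(\mathcal{V}_{nj})-Q(t_{nj})W^\prime(U(t_{nj}))$ and using $|W^\prime_\mathcal{B}|\le L_1$, the global Lipschitz bound $L_2$, the boundedness of $\mathcal{Q}_{nj}$, and the fact that $W^\prime_\mathcal{B}(U)=W^\prime(U)$ since $\|U\|_{L^\infty}\le M_u<\mathcal{B}$, I bound them by $C(|e^q_{nj}|+\|e^v_{nj}\|)$, hence $O(\tau^2)$. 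With the inductive hypothesis $\|\Delta e^v_{nj}\|=O(\tau)$ for $j<i$ the lower-stage terms are $O(\tau^2)$ as well, so $\|R_{ni}\|=O(\tau^2)$. Because $a_{ii}\ge 0$ makes $(I-\tau a_{ii}\Delta)^{-1}$ a contraction on $L^2$, this yields $\|e^v_{ni}\|=O(\tau^2)$, and reading $\tau a_{ii}\Delta e^v_{ni}=R_{ni}-e^v_{ni}=O(\tau^2)$ off the same equation produces the one-power loss $\|\Delta e^v_{ni}\|=O(\tau)$, which is precisely the $c_4\tau$ bound. The same resolvent estimate gives $\|e^u_{ni}\|=O(\tau^2)$; for $e^q_{ni}$ I would expand the inner-product difference and integrate by parts twice in the diffusion contribution, $(W^\prime(U),\Delta e^u_{nj})=(\Delta W^\prime(U),e^u_{nj})$, so that only $\|e^u\|$ (and no derivative of it) is needed and $|e^q_{ni}|=O(\tau^2)$. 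Summing over $i$ delivers the $c_3\tau^2$ estimate.

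The $L^\infty$ bound then follows from elliptic regularity and the embedding $H^2\hookrightarrow L^\infty$ (valid for $\dim\Omega\le 3$): $\|\mathcal{V}_{ni}\|_{L^\infty}\le\|U(t_{ni})\|_{L^\infty}+\|e^v_{ni}\|_{L^\infty}\le M_u+c(\|e^v_{ni}\|+\|\Delta e^v_{ni}\|)\le M_u+c(c_3\tau^2+c_4\tau)$, which is at most $M_u+\tfrac12$ as soon as $\tau\le(4c(c_3+c_4))^{-1}$ (absorbing $\tau^2\le\tau$); note that $M_u+\tfrac12<\mathcal{B}$, so a posteriori the truncation is indeed inactive at the computed stages. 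The companion restriction $\tau\le(2c_2)^{-1}$ is what allows the nonlinear constant $c_2$ to be absorbed when closing the stage recursion. I expect the main obstacle to be exactly the simultaneous bookkeeping of $\|\Delta e^v\|$ at order $\tau$ and the $L^2$ errors at order $\tau^2$: the $\Delta e^v_{nj}$ terms re-enter the right-hand side of later stages, so the induction must carry the $H^2$ bound in tandem with the $L^2$ bounds, and the resolvent estimates for $(I-\tau a_{ii}\Delta)^{-1}$ must be tracked carefully enough to keep all constants independent of $\tau$ and $n$.
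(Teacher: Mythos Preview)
Your plan diverges from the paper's in a structural way, and the divergence creates a genuine gap. You organize the argument as a stage-by-stage induction exploiting a lower-triangular (DIRK) form of $A$ together with resolvent bounds for $(I-\tau a_{ii}\Delta)^{-1}$. But Assumption~$\mathcal{A}1$ says the implicit tableau is algebraically and \emph{diagonally stable}---meaning there exists a positive diagonal matrix $\widetilde H$ with $\widetilde H A+A^{\mathrm T}\widetilde H$ positive definite---which is not the same as diagonally implicit. The Gauss-based SAV-GARK$(4,5,4)$ in the appendix carries a fully coupled $2\times2$ implicit block and is not lower triangular, so your induction on $i$ cannot even start there. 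Even within DIRK, your extraction $\tau a_{ii}\Delta e^v_{ni}=R_{ni}-e^v_{ni}\Rightarrow\|\Delta e^v_{ni}\|=O(\tau)$ needs $a_{ii}>0$, while you only assume $a_{ii}\ge0$ and several of the paper's tableaux have $a_{00}=0$; those stages happen to be inactive (first column of $A$ also zero), so the induction can be repaired, but this is case analysis your plan omits.

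The paper's proof instead uses diagonal stability directly and never argues stage by stage. It forms the $\widetilde h_i$-weighted sum of the stage error equations and, combining the sign $-(\Delta e,e)\ge0$ with the matrices $\widetilde M_d=\widetilde H A^{-1}$, $\widetilde M_s=\widetilde H A^{-1}\widehat A$ and the positive definite $\widetilde M_l=A^{-\mathrm T}\widetilde H+\widetilde H A^{-1}-2l\widetilde H$, obtains a single coercive inequality $\underline\lambda_l\sum_i\|U(t_{ni})-\mathcal V_{ni}\|^2\le\cdots$ over all stages simultaneously; the $\mathcal U$, $\dot{\mathcal U}$ and $\mathcal Q$ errors are handled by the same device, with $\|\dot U(t_{ni})-\dot{\mathcal U}_{ni}\|$ bounded through the matrix machinery rather than via your integration-by-parts shortcut $(W',\Delta e^u)=(\Delta W',e^u)$. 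The Laplacian bound $\sum_i\|\Delta(U(t_{ni})-\mathcal V_{ni})\|\le c_4\tau$ is recovered only at the very end, by applying $A^{-1}$ as a matrix on the stage index to the full vector of error equations---legitimate because diagonal stability forces $A$ nonsingular---rather than by dividing by a possibly vanishing $a_{ii}$. This is precisely what lets the argument cover non-DIRK implicit parts.
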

\begin{proof}
	Since $W^\prime_\mathcal{B}(U(t_{ni})) = W^\prime(U(t_{ni}))$, the exact solutions satisfy
	\begin{equation}\label{err_03}
		\begin{aligned}
			U(t_{ni}) &= U(t_n) + \tau \sum\limits_{j = 0}^{s-1} (a_{ij} \Delta U(t_{nj}) - 2 \widehat{a}_{ij} Q(t_{nj}) W^\prime_\mathcal{B} (U(t_{nj}))) + \eta^v_{ni}, \\ 
			U(t_{ni}) &= U(t_n) + \tau \sum\limits_{j=0}^{s-1} a_{ij} \dot{U}(t_{nj}) + \eta_{ni}^u, \ \dot{U}_{ni} = \Delta U(t_{ni}) - 2 Q(t_{ni}) W^\prime_\mathcal{B}(U(t_{ni})), \\ 
			Q(t_{ni}) &=  Q(t_n) + \tau \sum\limits_{j=0}^{s-1} a_{ij} \dot{Q}(t_{nj}) + \eta^q_{ni}, \ \dot{Q}(t_{ni}) = (W^\prime_\mathcal{B}(U(t_{ni})), \dot{U}(t_{ni})), \\ 
		\end{aligned}
	\end{equation}
	where 
    \begin{equation*}
        \sum\limits_{i=0}^{s-1} (\|\eta_{ni}^v\| + \|\eta_{ni}^u \| + |\eta_{ni}^q|) \leq c_1 \tau^{2}.
    \end{equation*}
    Subtracting the second and sixth equations of \eqref{err_03} from that of \eqref{err_02} yields
	\begin{equation}\label{err_04}
		\begin{aligned}
            U(t_{ni}) - \mathcal{V}_{ni} &= \tau \sum\limits_{j=0}^{s-1} ( a_{ij}  \Delta (U(t_{nj}) - \mathcal{V}_{nj}) - 2 \widehat{a}_{ij} \xi_{nj} ) + \eta_{ni}^v, \\ 
			U(t_{ni}) - \mathcal{U}_{ni} &= \tau \sum\limits_{j=0}^{s-1} a_{ij} (\dot{U}(t_{nj}) - \dot{\mathcal{U}}_{nj}) + \eta_{ni}^u, \\ 
			Q(t_{ni}) - \mathcal{Q}_{ni} &= \tau \sum\limits_{j=0}^{s-1} a_{ij} (\dot{Q}(t_{nj}) - \dot{\mathcal{Q}}_{nj}) + \eta_{ni}^q,
		\end{aligned}
	\end{equation}
	where
	\begin{equation*} \label{err_05}
		\begin{aligned}
			&\dot{U}(t_{ni}) - \dot{\mathcal{U}}_{ni} = \Delta (U(t_{ni}) - \mathcal{U}_{ni}) - 2 \xi_{ni}, \ \xi_{ni} = Q(t_{ni}) W^\prime_\mathcal{B}(U(t_{ni}))  - \mathcal{Q}_{ni} W^\prime_\mathcal{B} (\mathcal{V}_{ni}), \\ 
			&\dot{Q}(t_{ni}) - \dot{\mathcal{Q}}_{ni} = (W^\prime_\mathcal{B} (U(t_{ni})) - W^\prime_\mathcal{B}(\mathcal{V}_{ni}), \dot{U}(t_{ni})) + ( W^\prime_\mathcal{B}(\mathcal{V}_{ni}), \dot{U}(t_{ni}) - \dot{\mathcal{U}}_{ni} ).
		\end{aligned}
	\end{equation*}
	There is no difficulty in confirming that
	\begin{equation} \label{err_06}
		\begin{aligned}
			\|\xi_{ni}\| &\leq M_u \|U(t_{ni}) - \mathcal{V}_{ni}\| + L_1 |Q(t_{ni}) - \mathcal{Q}_{ni}|, \\
            |\dot{Q}(t_{ni}) - \dot{\mathcal{Q}}_{ni}| &\leq \dot{M}_u L_2 \|U(t_{ni}) - \mathcal{V}_{ni}\| + L_1 \|\dot{U}(t_{ni}) - \dot{\mathcal{U}}_{ni}\|.
		\end{aligned}
	\end{equation}
	According to Assumption $\mathcal{A}1$, there exists a positive definite diagonal matrix $\widetilde{H} = {\rm diag} \{ \widetilde{h}_0, \widetilde{h}_1, \cdots, \widetilde{h}_{s-1} \}$, such that $\widetilde{M} = \widetilde{H} A+ A^{\mathrm{T}} \widetilde{H}$ is positive definite. Therefore, we can find a sufficiently small constant $l$, such that
	\begin{equation*} 
		\widetilde{M}_l = (\widetilde{m}_{ij}^l) = A^{-\mathrm{T}} \widetilde{M} A^{-1} - 2 l \widetilde{H} = A^{-\mathrm{T}} \widetilde{H} + \widetilde{H} A^{-1} - 2 l \widetilde{H}
	\end{equation*}
	is positive definite. Moreover, let $\widetilde{M}_d = \widetilde{H} A^{-1}$, $\widetilde{M}_s = \widetilde{H} A^{-1} \widehat{A}$. Then,
	\begin{equation*}
		\begin{aligned}
			0 & \leq 2 l \sum\limits_{i=0}^{s-1} \widetilde{h}_i \|U(t_{ni}) - \mathcal{V}_{ni}\|^2 - 2\tau \sum\limits_{i=0}^{s-1} \widetilde{h}_i (\Delta ( U(t_{ni}) - \mathcal{V}_{ni} ), U(t_{ni}) - \mathcal{V}_{ni}) \\ 
			& = 2 \sum\limits_{i,j = 0}^{s-1} \widetilde{m}_{ij}^d (U(t_{ni}) - \mathcal{V}_{ni}, U(t_{nj}) - \mathcal{V}_{nj}) - \sum\limits_{i,j = 0}^{s-1} \widetilde{m}_{ij}^l (U(t_{ni}) - \mathcal{V}_{ni}, U(t_{nj}) - \mathcal{V}_{nj}) \\ 
			& \qquad - 2\tau \sum\limits_{i=0}^{s-1} \widetilde{h}_i (\Delta ( U(t_{ni}) - \mathcal{V}_{ni} ), U(t_{ni}) - \mathcal{V}_{ni}) \\ 
			& = 2 \sum\limits_{i,j = 0}^{s-1} \widetilde{m}_{ij}^d (U(t_{ni}) - \mathcal{V}_{ni}, \eta_{nj}^v) - \sum\limits_{i,j = 0}^{s-1} \widetilde{m}_{ij}^l (U(t_{ni}) - \mathcal{V}_{ni}, U(t_{nj}) - \mathcal{V}_{nj}) \\ 
			& \qquad - 4 \tau \sum\limits_{i,j = 0}^{s-1} \widetilde{m}^s_{ij} (U(t_{ni}) - \mathcal{V}_{ni}, \xi_{nj}) \\ 
            & \leq 2 \overline{\lambda}_d \sum\limits_{i=0}^{s-1} \|U(t_{ni}) - \mathcal{V}_{ni}\| \sum\limits_{i=0}^{s-1} \|\eta_{ni}^v\| - \underline{\lambda}_l \sum\limits_{i=0}^{s-1} \|U(t_{ni}) - \mathcal{V}_{ni}\|^2 \\ 
            & \qquad + 4 \overline{\lambda}_s (\dot{M}_u L_2 + L_1) \tau \sum\limits_{i=0}^{s-1} \| U(t_{ni}) - \mathcal{V}_{ni} \| ( \sum\limits_{i=0}^{s-1} |Q(t_{ni}) - \mathcal{Q}_{ni}| + \sum\limits_{i=0}^{s-1} \| U(t_{ni}) - \mathcal{V}_{ni} \| ),
		\end{aligned}
	\end{equation*}
    where $\underline{\lambda}_\alpha$ and $\overline{\lambda}_\alpha$, $\alpha = d,l,s,h$ are the maximum and minimum eigenvalues of $\widetilde{M}_d$, $\widetilde{M}_l$, $\widetilde{M}_s$, and $\widetilde{H}$, respectively. Consequently, 
	\begin{equation}\label{err_08}
        \sum\limits_{i=0}^{s-1} \|U(t_{ni}) - \mathcal{V}_{ni}\| \leq \frac{4 s \overline{\lambda}_s (\dot{M}_u L_2 + L_1)}{\underline{\lambda}_l} \tau \sum\limits_{i=0}^{s-1} ( \| U(t_{ni}) - \mathcal{V}_{ni} \| + |Q(t_{ni}) - \mathcal{Q}_{ni}| ) + \frac{2 s \overline{\lambda}_d}{\underline{\lambda}_l} \sum\limits_{i=0}^{s-1} \|\eta_{ni}^v\|.
	\end{equation}
	Following the same procedure, we can derive
	\begin{equation} \label{err_09}
        \sum\limits_{i=0}^{s-1} \|U(t_{ni}) - \mathcal{U}_{ni}\| \leq \frac{4 s \overline{\lambda}_h (\dot{M}_u L_2 + L_1)}{\underline{\lambda}_l} \tau \sum\limits_{i=0}^{s-1} ( \| U(t_{ni}) - \mathcal{V}_{ni} \| + |Q(t_{ni}) - \mathcal{Q}_{ni}| ) + \frac{2 s \overline{\lambda}_h}{\underline{\lambda}_l} \sum\limits_{i=0}^{s-1} \|\eta_{ni}^u\|.
	\end{equation}
	Combining \eqref{err_09} with the second equation of \eqref{err_04}, we have
	\begin{equation} \label{err_10}
	\begin{aligned}
        & \sum\limits_{i=0}^{s-1}\|\dot{U} (t_{ni}) - \dot{\mathcal{U}}_{ni} \| \leq \frac{4s \overline{\lambda}_d \overline{\lambda}_h (\dot{M}_u L_2 + L_1)}{\underline{\lambda}_l \underline{\lambda}_h} \sum\limits_{i=0}^{s-1} ( \| U(t_{ni}) - \mathcal{V}_{ni} \| + |Q(t_{ni}) - \mathcal{Q}_{ni}| ) \\   
        &\qquad \qquad + \frac{s \overline{\lambda}_d (2 \overline{\lambda}_h + \underline{\lambda}_l)}{\underline{\lambda}_l\underline{\lambda}_h}\tau^{-1} \sum\limits_{i=0}^{s-1} \|\eta_{ni}^u\|.
    \end{aligned}
	\end{equation}
	Subtracting the fourth equation of \eqref{err_03} with that of \eqref{err_02} gives 
	\begin{equation*}
		Q(t_{ni}) - \mathcal{Q}_{ni} = \tau \sum\limits_{j=0}^{s-1} a_{ij} (\dot{Q}(t_{ni}) - \dot{\mathcal{Q}}_{ni}) + \eta_{ni}^q.
	\end{equation*}
	Repeating to use the above technique and combining \eqref{err_06} and \eqref{err_10} then result in
	\begin{equation}\label{err_11}
		\begin{aligned}
			&\sum\limits_{i=0}^{s-1}|Q(t_{ni}) - \mathcal{Q}_{ni}| \leq \frac{2s \overline{\lambda}_h (\underline{\lambda}_h + 2s \overline{\lambda}_d \overline{\lambda}_h L_1)(\dot{M}_u L_2 + L_1) }{ \underline{\lambda}_l \underline{\lambda}_h} \tau \sum\limits_{i=0}^{s-1} (\| U(t_{ni}) - \mathcal{V}_{ni} \| + |Q(t_{ni}) - \mathcal{Q}_{ni}|) \\ 
			& \qquad \qquad + \frac{ 2s^2 \overline{\lambda}_h \overline{\lambda}_d (2 \overline{\lambda}_h + \underline{\lambda}_l) L_1 }{ \underline{\lambda}_l^2 \underline{\lambda}_h } \sum\limits_{i=0}^{s-1} (\|\eta_{ni}^u\| + |\eta_{ni}^q|).
		\end{aligned}
	\end{equation}
	Adding \eqref{err_08}, \eqref{err_09} and \eqref{err_11} together yields
	\begin{equation*}
		\begin{aligned}
			&\sum\limits_{i= 0}^{s-1} ( \| U(t_{ni}) - \mathcal{V}_{ni} \| + \| U(t_{ni}) - \mathcal{U}_{ni} \| + |Q(t_{ni}) - \mathcal{Q}_{ni}| ) \\ 
			&\qquad \leq c_2 \tau \sum\limits_{i= 0}^{s-1} ( \| U(t_{ni}) - \mathcal{V}_{ni} \| + \| U(t_{ni}) - \mathcal{U}_{ni} \| + |Q(t_{ni}) - \mathcal{Q}_{ni}| ) + \frac{c_3}{2} \tau^{2}.
		\end{aligned}
	\end{equation*}
	It follows by setting $\tau \leq (2c_2)^{-1}$ that
	\begin{equation}\label{err_12}
		\sum\limits_{i= 0}^{s-1} ( \| U(t_{ni}) - \mathcal{V}_{ni} \| + \| U(t_{ni}) - \mathcal{U}_{ni} \| + |Q(t_{ni}) - \mathcal{Q}_{ni}| ) \leq c_3 \tau^2.
	\end{equation}
	Using equation \eqref{err_12}, we then demonstrate the boundedness of $\mathcal{V}_{ni}$ for sufficiently small $\tau$. Inserting \eqref{err_12} into the first equation of \eqref{err_04} infers
	\begin{equation*}
        \sum\limits_{i=0}^{s-1} \|\Delta (U(t_{ni}) - \mathcal{V}_{ni})\| \leq \frac{\overline{\lambda}_d + 2 \overline{\lambda}_s (M_u + L_1)}{\underline{\lambda}_h \tau} ( \|U(t_{ni}) - \mathcal{V}_{ni}\| + |Q(t_{ni}) - \mathcal{Q}_{ni}| + \|\eta_{ni}^v\| ) \leq c_4 \tau.
	\end{equation*}
	The Sobolev inequality $\|f\|_{L^\infty} \leq c\|f\|_{H^2}$ and the triangular inequality give us
	\begin{equation*}
		\begin{aligned}
			\|\mathcal{V}_{ni}\|_{L^\infty} &\leq \|U(t_{ni})\|_{L^\infty} + \|U(t_{ni}) - \mathcal{V}_{ni}\|_{L^\infty} \\ 
			& \leq M_u + c \|U(t_{ni}) - \mathcal{V}_{ni}\|_{H^2} \leq M_u + 2c(c_3 + c_4) \tau.
		\end{aligned}
	\end{equation*}
	The estimate for $\mathcal{V}_{ni}$ in Lemma \ref{err_lem_01} is straightforward after setting $\tau \leq (4c(c_3 + c_4))^{-1}$. Therefore, we have completed the proof.
\end{proof}
Using the Taylor's formula and Lemma \ref{err_lem_01}, it is readily to confirm that when the time step satisfies the condition of Lemma \ref{err_lem_01}, the reference solutions further satisfy 
\begin{equation}\label{err_13}
	U(t_{n+1}) = U(t_n) + \tau \sum\limits_{i=0}^{s-1} b_i \dot{\mathcal{U}}_{ni} + \eta_{n+1}^u, \quad Q(t_{n+1}) = Q(t_n) + \tau \sum\limits_{i=0}^{s-1} b_i \dot{\mathcal{Q}}_{ni} + \eta_{n+1}^q,
\end{equation}
with
\begin{equation*}
	\|\eta_{n+1}^u\|_{H^1} + \|\eta_{n+1}^q\| \leq c_5 \tau^{p+1}.
\end{equation*}
We proceed to prove the convergence of the modified scheme obtained by replacing the nonlinear term $W^\prime(\cdot)$ in \eqref{err_01} with $W^\prime_\mathcal{B}(\cdot)$. For clarity, we remain to use the original notation to denote the solution of this modified scheme.
 Our proof demonstrates that $\|v_{ni}\|_{L^\infty} \leq M_u + 1$ for sufficiently small time steps. Consequently, $W^\prime_\mathcal{B}(v_{ni}) = W^\prime(v_{ni})$, which indirectly confirming the convergence of the SAV-MARK method \eqref{err_01}.

Let 
\begin{equation*}
	\mathcal{J}_{ni} = \mathcal{V}_{ni} - v_{ni}, \ \mathcal{E}_{ni} = \mathcal{U}_{ni} - u_{ni}, \ \mathcal{D}_{ni} = \mathcal{Q}_{ni} - q_{ni}.
\end{equation*}
Define solution errors 
\begin{equation*}
	E^{n+1} = U(t_{n+1}) - u^{n+1}, \ D^{n+1} = Q(t_{n+1}) - q^{n+1}.
\end{equation*}
\begin{thm} \label{thm-convergence}
	Let $c_\star =((3c_5^2 + c_{11})T\exp{(2c_{12}T)})^{\frac{1}{2}}$, and the time step 
	\begin{equation*}
		\tau \leq \min \{ (2c_2)^{-1},  (4c(c_3 + c_4))^{-1},  (2c_6)^{-1}, (4c(c_\star c_7 + c_8))^{-\frac{1}{p-1}}, (2c_{12})^{-1}\}.
	\end{equation*}
Then, the SAV-MARK method is convergent in the sense
	\begin{equation*}
		\|E^n\| + |D^n | \leq c_\star \tau^p, \quad 0 \leq n \leq N_t.
	\end{equation*}
\end{thm}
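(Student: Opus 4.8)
The plan is to argue by induction on $n$, propagating the bound $\|E^n\| + |D^n| \le c_\star \tau^p$ while controlling the combined quantity $\|E^n\|^2 + |D^n|^2$, and using the reference solutions of \eqref{err_02} as an intermediary. Since those reference stages restart from the exact data $U(t_n), Q(t_n)$ whereas the numerical stages of \eqref{err_01} start from $u^n, q^n$, subtracting the two systems produces error recursions for $\mathcal{J}_{ni}, \mathcal{E}_{ni}, \mathcal{D}_{ni}$ in which $E^n$ and $D^n$ enter as forcing. Writing $\delta\dot{u}_{ni} = \Delta\mathcal{E}_{ni} - 2\zeta_{ni}$ with $\zeta_{ni} = \mathcal{Q}_{ni} W^\prime_\mathcal{B}(\mathcal{V}_{ni}) - q_{ni} W^\prime_\mathcal{B}(v_{ni})$, these read
\begin{equation*}
\mathcal{E}_{ni} = E^n + \tau\sum_{j=0}^{s-1} a_{ij}\,\delta\dot{u}_{nj}, \qquad \mathcal{D}_{ni} = D^n + \tau\sum_{j=0}^{s-1} a_{ij}\,\delta\dot{q}_{nj},
\end{equation*}
together with the analogous equation for $\mathcal{J}_{ni}$ coming from the $v$-stage (with $a_{ij}\Delta\mathcal{J}_{nj} - 2\widehat{a}_{ij}\zeta_{nj}$). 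Throughout I would work with the truncated nonlinearity $W^\prime_\mathcal{B}$, exploiting its global boundedness and Lipschitz constants $L_1, L_2$; the identification $W^\prime_\mathcal{B}(v_{ni}) = W^\prime(v_{ni})$ is recovered at the end of each induction step, so that the convergence of the genuine scheme \eqref{err_01} follows.

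The first step is to bound the stage errors by the step errors. Repeating the diagonal-plus-algebraic-stability computation of Lemma \ref{err_lem_01}, now with $E^n, D^n$ in the role of the truncation residuals $\eta^{\bullet}_{ni}$, and using $\|\zeta_{ni}\| \le (M_q+1) L_2 \|\mathcal{J}_{ni}\| + L_1 |\mathcal{D}_{ni}|$ together with the bound $|\mathcal{Q}_{ni}| \le M_q + 1$ inherited from Lemma \ref{err_lem_01}, the coupled inequalities close for $\tau \le (2c_6)^{-1}$ to give $\sum_i(\|\mathcal{J}_{ni}\| + \|\mathcal{E}_{ni}\| + |\mathcal{D}_{ni}|) \le c_7(\|E^n\| + |D^n|)$. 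Solving the $\mathcal{J}$-equation for $\Delta\mathcal{J}_{ni}$ loses one power of $\tau$, so under the induction hypothesis one obtains $\|\mathcal{J}_{ni}\|_{H^2} \le (c_\star c_7 + c_8)\tau^{p-1}$; the Sobolev embedding and $\|\mathcal{V}_{ni}\|_{L^\infty} \le M_u + \tfrac12$ from Lemma \ref{err_lem_01} then yield $\|v_{ni}\|_{L^\infty} \le M_u + 1$ once $\tau \le (4c(c_\star c_7 + c_8))^{-1/(p-1)}$, which justifies $W^\prime_\mathcal{B}(v_{ni}) = W^\prime(v_{ni})$ and also gives the a priori control $\|\nabla v_{ni}\| \le C$ needed below.

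Next I would mimic the proof of Theorem \ref{thm-energy_stable}. Expanding $\|E^{n+1}\|^2$ from $E^{n+1} = E^n + \tau\sum_i b_i\delta\dot{u}_{ni} + \eta^u_{n+1}$ (using \eqref{err_13}), substituting $E^n = \mathcal{E}_{ni} - \tau\sum_j a_{ij}\delta\dot{u}_{nj}$, and invoking positive semidefiniteness of the algebraic-stability matrix $M$ with $b_i \ge 0$ gives
\begin{equation*}
\|E^{n+1}\|^2 \le \|E^n\|^2 - 2\tau\sum_{i=0}^{s-1} b_i\|\nabla\mathcal{E}_{ni}\|^2 - 4\tau\sum_{i=0}^{s-1} b_i(\mathcal{E}_{ni},\zeta_{ni}) + R^u_n,
\end{equation*}
where $R^u_n$ collects the $\eta^u_{n+1}$ contributions. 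The cross term is controlled by Step 1 and Young's inequality, while $R^u_n$ is handled precisely because $\eta^u_{n+1}$ is measured in $H^1$: the dangerous pairing $\tau\sum_i b_i(\Delta\mathcal{E}_{ni},\eta^u_{n+1}) = -\tau\sum_i b_i(\nabla\mathcal{E}_{ni},\nabla\eta^u_{n+1})$ is absorbed by the dissipation. The corresponding estimate for $|D^{n+1}|^2$ reduces to bounding $2\tau\sum_i b_i\mathcal{D}_{ni}\,\delta\dot{q}_{ni}$, with $\delta\dot{q}_{ni} = (W^\prime_\mathcal{B}(\mathcal{V}_{ni}) - W^\prime_\mathcal{B}(v_{ni}),\dot{\mathcal{U}}_{ni}) + (W^\prime_\mathcal{B}(v_{ni}),\delta\dot{u}_{ni})$. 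The genuine obstacle is the term $(W^\prime_\mathcal{B}(v_{ni}),\Delta\mathcal{E}_{ni})$ hidden in $\delta\dot{u}_{ni}$, which carries a \emph{second} spatial derivative of the $u$-stage error in an equation for the scalar $q$-error. The key is to integrate by parts, $(W^\prime_\mathcal{B}(v_{ni}),\Delta\mathcal{E}_{ni}) = -(W^{\prime\prime}_\mathcal{B}(v_{ni})\nabla v_{ni},\nabla\mathcal{E}_{ni})$, use $\|\nabla v_{ni}\| \le C$ from Step 1 to majorize it by $C\|\nabla\mathcal{E}_{ni}\|$, and then split it by a weighted Young inequality into $\epsilon\tau\sum_i b_i\|\nabla\mathcal{E}_{ni}\|^2$, absorbed by the diffusion dissipation above (choosing $\epsilon<2$), plus a benign $C\tau\sum_i b_i|\mathcal{D}_{ni}|^2$.

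Adding the two estimates, the gradient dissipation dominates every $\|\nabla\mathcal{E}_{ni}\|$ contribution and leaves
\begin{equation*}
\|E^{n+1}\|^2 + |D^{n+1}|^2 \le (1 + 2c_{12}\tau)\big(\|E^n\|^2 + |D^n|^2\big) + (3c_5^2 + c_{11})\,\tau^{2p+1}.
\end{equation*}
Starting from $E^0 = D^0 = 0$ and applying the discrete Gronwall inequality for $\tau \le (2c_{12})^{-1}$ yields $\|E^n\|^2 + |D^n|^2 \le (3c_5^2 + c_{11})\,T\exp(2c_{12}T)\,\tau^{2p} = c_\star^2\tau^{2p}$, which simultaneously closes the induction (the bound used in Step 1) and proves the claim. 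The hardest part is reconciling the second-order term $\Delta\mathcal{E}_{ni}$ appearing in the scalar $q$-error equation with the merely first-order (gradient) dissipation available from the $u$-equation; this is what forces the joint treatment of $(E^n,D^n)$, the a priori $H^2$ control of the intermediate stages, and the $H^1$ measurement of the local truncation error in \eqref{err_13}.
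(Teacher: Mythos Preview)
Your proposal is correct and follows the same approach as the paper: induction on $n$, error equations obtained by subtracting the reference stages \eqref{err_02} from the numerical ones, stage bounds via the diagonal-stability computation of Lemma~\ref{err_lem_01}, the $L^\infty$ control of $v_{ni}$ by Sobolev embedding, a one-step recursion for $\|E^{n+1}\|^2+|D^{n+1}|^2$ via algebraic stability, and discrete Gronwall. Your integration-by-parts treatment of $(W'_\mathcal{B}(v_{ni}),\Delta\mathcal{E}_{ni})$, absorbed by the gradient dissipation $\tau\sum_i b_i\|\nabla\mathcal{E}_{li}\|^2$ retained on the left of \eqref{err_gronwall_E}, is exactly what the paper's ``Analogously'' at \eqref{err_gronwall_D} needs but leaves implicit.
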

\begin{proof}
	We will complete the proof by the mathematical induction. As SAV-MARK is a one-step method, it is enough to prove the result for $n = l+1$ while assuming it holds for $n = l$. Let $n = l$. Subtracting \eqref{err_02} and \eqref{err_13} from \eqref{err_01}, we get
	\begin{equation}\label{err_14}
		\begin{aligned}
			\mathcal{J}_{li} &= E^l + \tau \sum\limits_{j=0}^{s-1} (a_{ij} \Delta \mathcal{J}_{lj} - 2 \widehat{a}_{ij} \zeta_{lj}), \\ 
			\mathcal{E}_{li} &= E^l + \tau \sum\limits_{j=0}^{s-1} a_{ij} \dot{\mathcal{E}}_{lj}, \ \mathcal{D}_{li} = D^l + \tau \sum\limits_{j=0}^{s-1} a_{ij} \dot{\mathcal{D}}_{lj}, \\ 
			E^{l+1} &= E^l + \tau\sum\limits_{i=0}^{s-1} b_i \dot{\mathcal{E}}_{li} + \eta_{l+1}^u, \ D^{l+1} = D^l + \tau\sum\limits_{i=0}^{s-1} b_i \dot{\mathcal{D}}_{li} + \eta_{l+1}^q,
		\end{aligned}
	\end{equation}
	where
	\begin{equation*}
		\begin{aligned}
			& \dot{\mathcal{E}}_{li} = \Delta \mathcal{E}_{li} - 2\zeta_{li}, \ \zeta_{li} = \mathcal{Q}_{li} (W^\prime_\mathcal{B}(\mathcal{V}_{li}) - W^\prime_\mathcal{B}(v_{li}) ) + \mathcal{D}_{li} W^\prime_\mathcal{B}(v_{li}), \\
			& \dot{\mathcal{D}}_{li} = (W^\prime_\mathcal{B} (\mathcal{V}_{li}) - W^\prime_\mathcal{B}(v_{li}), \dot{\mathcal{U}}_{li})  + (W^\prime_\mathcal{B} (v_{li}), \dot{\mathcal{E}}_{li}).
		\end{aligned}
	\end{equation*}
	Based on the proof of Lemma \ref{err_lem_01}, we can conclude that $|\mathcal{Q}_{li}| \leq \mathcal{M}_q$ and $|\dot{\mathcal{U}}_{li}| \leq \dot{\mathcal{M}}_u$. Applying the propositions of $W^\prime_\mathcal{B}(\cdot)$ then yields
	\begin{equation*}
		\|\zeta_{li}\|  \leq (\mathcal{M}_q L_2 + L_1) (\|\mathcal{J}_{li}\| + |\mathcal{D}_{li}| ), \quad |\dot{\mathcal{D}}_{li}| \leq (\dot{\mathcal{M}}_u L_2 + L_1)( \|\mathcal{J}_{li}\| + \|\dot{\mathcal{E}}_{li}\| ).
	\end{equation*}
	Furthermore, using \eqref{err_14} and the same technique employed in Lemma \ref{err_lem_01}, we can still arrive at
	\begin{equation*}
		\begin{aligned}
			\sum\limits_{i=0}^{s-1} \|\mathcal{J}_{li}\| &\leq  \frac{2 s \overline{\lambda}_s (\mathcal{M}_q L_2 + L_1)}{\underline{\lambda}_l} \tau \sum\limits_{i=0}^{s-1} ( \|\mathcal{J}_{li}\| + |\mathcal{D}_{li}| ) + \frac{2s \overline{\lambda}_d}{\underline{\lambda}_l}\|E^l\|, \\ 
			\sum\limits_{i=0}^{s-1} \|\mathcal{E}_{li}\| &\leq   \frac{2 s \overline{\lambda}_h (\mathcal{M}_q L_2 + L_1)}{\underline{\lambda}_l} \tau \sum\limits_{i=0}^{s-1} ( \|\mathcal{J}_{li}\| + |\mathcal{D}_{li}| ) +\frac{2s \overline{\lambda}_h}{\underline{\lambda}_l} \|E^l\|, \\ 
			\sum\limits_{i=0}^{s-1} |\mathcal{D}_{li}| &\leq  \frac{s \overline{\lambda}_h (\underline{\lambda}_d \underline{\lambda}_h + 2 s \overline{\lambda}_d \overline{\lambda}_h L_1)(\dot{\mathcal{M}}_u L_2 + L_1) }{\underline{\lambda}_l^2 \underline{\lambda}_h} \tau \sum\limits_{i=0}^{s-1} ( \|\mathcal{J}_{li}\| + |\mathcal{D}_{li}| ) \\
			&\quad + \frac{s^2 \overline{\lambda}_h \overline{\lambda}_d (2\overline{\lambda}_h + \underline{\lambda}_l) (\dot{\mathcal{M}}_u L_2 + L_1)}{\underline{\lambda}_l^2 \underline{\lambda}_h} |D^l|, \\ 
			\sum\limits_{i=0}^{s-1} \|\dot{\mathcal{E}}_{li}\| &\leq  \frac{2s \overline{\lambda}_d \overline{\lambda}_h(\mathcal{M}_q L_2 + L_1)}{\underline{\lambda}_l \underline{\lambda}_h} \sum\limits_{i=0}^{s-1} ( \|\mathcal{J}_{li}\| + |\mathcal{D}_{li}|  ) + \frac{s \overline{\lambda}_d (\underline{\lambda}_l + 2 \overline{\lambda}_h)}{\underline{\lambda}_l \underline{\lambda}_h} \tau^{-1} \|E^l\|.  
		\end{aligned}
	\end{equation*}
	Consequently,
	\begin{equation*}
			\sum\limits_{i=0}^{s-1} (  \|\mathcal{J}_{li}\| + \|\mathcal{E}_{li}\| + |\mathcal{D}_{li}| ) \leq c_6 \tau \sum\limits_{i=0}^{s-1} ( \|\mathcal{J}_{li}\| + \|\mathcal{E}_{li}\| + |\mathcal{D}_{li}| ) + \frac{c_7}{2}(\|E^l\| + |D^l|).
	\end{equation*}
	The restriction $\tau \leq (2c_6)^{-1}$ and the induction produce
	\begin{equation*}
		\sum\limits_{i=0}^{s-1} (  \|\mathcal{J}_{li}\| + \|\mathcal{E}_{li}\| + |\mathcal{D}_{li}| ) \leq c_\star c_7 \tau^p.
	\end{equation*} 
	Combining the above estimate with first equation of \eqref{err_14} then yields
	\begin{equation*}
		\|\Delta  \mathcal{J}_{li}\| \leq c_8 \tau^{p-1},
	\end{equation*}
	where $c_8 = \frac{\overline{\lambda}_d c_\star c_7 + s \overline{\lambda}_d c_\star + 2 \overline{\lambda}_s (\mathcal{M}_q L_2 + L_1)c_\star c_7}{\underline{\lambda}_h}$.
	Employing the inequalities $\|\nabla f\|^2 \leq \|f\|\|\Delta f\|$ and $\|f\|_{L^\infty} \leq c \|f\|_{H^2}$, it can be shown that if $\tau \leq (4c(c_\star c_7 + c_8))^{-\frac{1}{p-1}}$,
	\begin{equation*}
	\begin{aligned}
		\|v_{li} \|_{H^2} &\leq \|\mathcal{V}_{li}\|_{H^2} + \|\mathcal{J}_{li}\|_{H^2} \leq \|\mathcal{V}_{li}\|_{H^2} + 2(c_\star c_7 + c_8) \tau^{p-1} \leq c_9 , \\
		\|v_{li} \|_{L^\infty} &\leq \|\mathcal{V}_{li}\|_{L^\infty} + 2c(c_\star c_7 + c_8) \tau^{p-1} \leq M_u + 1.
	\end{aligned}
	\end{equation*}
	Let us now provide estimates for $E^{l+1}$ and $D^{l+1}$. Taking the difference between $\|E^{l+1}\|^2$ and $\|E^l\|^2$, and use the fourth equation of \eqref{err_14} yield
    \begin{equation}\label{diff_E}
		\begin{aligned}
			& \|E^{l+1}\|^2 - \|E^l\|^2 = 2\tau \sum\limits_{i=0}^{s-1} (E^l, b_i \dot{\mathcal{E}}_{li}) + \tau^2 \sum\limits_{i=0}^{s-1} \sum_{j=0}^{s-1} b_i b_j (\dot{\mathcal{E}}_{li}, \dot{\mathcal{E}}_{lj}) \\
			&\quad + 2 (E^l + \tau \sum\limits_{i=0}^{s-1} b_i \dot{\mathcal{E}}_{li}, \eta^u_{l+1}) + \|\eta_{l+1}^u\|^2.
		\end{aligned}
	\end{equation}
	Next, we individually estimate each of the terms on the right-hand side of \eqref{diff_E}. Based on the second equation of \eqref{err_14} and the algebraically stable condition, we deduce
	\begin{equation}\label{diff_E_rhs1}
		\begin{aligned}
			&2\tau \sum\limits_{i=0}^{s-1} (E^l, b_i \dot{\mathcal{E}}_{li}) + \tau^2 \sum\limits_{i=0}^{s-1} \sum_{j=0}^{s-1} b_i b_j (\dot{\mathcal{E}}_{li}, \dot{\mathcal{E}}_{lj}) + 2 \tau \sum\limits_{i=0}^{s-1} b_i \|\nabla \mathcal{E}_{li}\|^2 \\ 
			& = - \tau^2 \sum\limits_{i=0}^{s-1} \sum\limits_{j=0}^{s-1} m_{ij} (\dot{\mathcal{E}}_{li}, \dot{\mathcal{E}}_{lj}) + 2\tau \sum\limits_{i=0}^{s-1} b_i (\mathcal{E}_{li}, \dot{\mathcal{E}}_{li}) + 2 \tau \sum\limits_{i=0}^{s-1} b_i \|\nabla \mathcal{E}_{li}\|^2 \\ 
			& \qquad \leq   4(\mathcal{M}_q L_2 + L_1 + 1) \tau \sum\limits_{i=0}^{s-1} b_i (\|\mathcal{J}_{li}\|^2 + \|\mathcal{E}_{li}\|^2 + |\mathcal{D}_{li}|^2).
		\end{aligned}
	\end{equation}
	Using the Cauchy-Schwarz inequality and $ab \leq \frac{\tau}{2} a^2 + \frac{1}{2 \tau} b^2 $ yield
	\begin{equation}\label{diff_E_rhs2}
		\begin{aligned}
			&(E^l + \tau \sum\limits_{i=0}^{s-1} b_i \dot{\mathcal{E}}_{li}, \eta^u_{l+1}) = \|E^l\| \|\eta_{l+1}^u\| + \tau \sum\limits_{i=0}^{s-1} b_i (\Delta \mathcal{E}_{li} - 2\zeta_{li}, \eta^u_{l+1}) \\ 
			& \quad \leq \frac{\tau}{2} \|E^l\|^2 + \frac{1}{2\tau} \|\eta_{l+1}^u\|^2 + \tau \sum\limits_{i=0}^{s-1} b_i (\|\nabla \mathcal{E}_{li}\|\|\nabla \eta_{l+1}^u\| + 2 \|\zeta_{li}\|\|\eta_{l+1}^u\|) \\ 
			&\quad \leq \frac{\tau}{2} \|E^l\|^2 + \frac{\tau}{2} \sum\limits_{i=0}^{s-1} b_i \|\nabla \mathcal{E}_{li}\|^2 + 2(\mathcal{M}_q L_2 + L_1) \tau \sum\limits_{i=0}^{s-1} b_i (\|\mathcal{J}_{li}\|^2 + |\mathcal{D}_{li}|^2) + 2 c^2_5 \tau^{2p+1}.
		\end{aligned}
	\end{equation}
	Inserting \eqref{diff_E_rhs1} and \eqref{diff_E_rhs2} into \eqref{diff_E} infers
	\begin{equation}\label{err_gronwall_E}
		\begin{aligned}
			&\|E^{l+1}\|^2 + \tau \sum\limits_{i=0}^{s-1} b_i \|\nabla \mathcal{E}_{li}\|^2 \leq (1 + \tau )\|E^l\|^2  \\
			& \quad + 8(\mathcal{M}_q L_2 + L_1 + 1)\tau \sum\limits_{i=0}^{s-1} b_i (\|\mathcal{J}_{li}\|^2 + \|\mathcal{E}_{li}\|^2 + |\mathcal{D}_{li}|^2) + 3c_5^2 \tau^{2p+1}.
		\end{aligned}
	\end{equation}
	Analogously,
	\begin{equation}\label{err_gronwall_D}
		\begin{aligned}
			&|D^{l+1}|^2 \leq (1 + c_9 \tau) |D^l|^2 + c_{10}\tau \sum\limits_{i=0}^{s-1} (\|\mathcal{J}_{li}\|^2 + \|\mathcal{E}_{li}\|^2 + |\mathcal{D}_{li}|^2) + c_{11} \tau^{2p+1} .
		\end{aligned}
	\end{equation}
	Moreover, 
	\begin{equation}\label{err_gronwall_Rubbish}
		\begin{aligned}
			\sum\limits_{i=0}^{s-1} (\|\mathcal{J}_{li}\|^2 + \|\mathcal{E}_{li}\|^2 + |\mathcal{D}_{li}|^2)  &\leq ( \sum\limits_{i=0}^{s-1} \|\mathcal{J}_{li}\| + \|\mathcal{E}_{li}\| + |\mathcal{D}_{li}| )^2 \\ 
			&\leq 2 c_7^2 ( \|E^l\|^2 + |D^l|^2 ).
		\end{aligned}
	\end{equation}
	Collecting \eqref{err_gronwall_E}, \eqref{err_gronwall_D}, \eqref{err_gronwall_Rubbish} produces
	\begin{equation*}
		\|E^{l+1} \|^2 - |D^{l+1}|^2 \leq (1 + c_{12} \tau) (\|E^l\|^2 + |D^l|^2)  + (3c_5^2 + c_{11}) \tau^{2p+1}.
	\end{equation*}
	We observe that $c_{5}, c_{11}, c_{12}$ are independent of $c_\star$ and discrete parameters according to the derivations. By selecting $\tau \leq (2c_{12})^{-1}$ and applying the discrete Gronwall inequality, we can derive the desired result with $c_\star = ((3c_5^2 + c_{11})T\exp{(2c_{12}T)})^{\frac{1}{2}}$. Therefore, the proof is completed.
\end{proof}

\subsection{Relationships with the SAV-RK methods}\label{relationship}
In \cite{sav_nlsw}, Li et al. developed high-order unconditionally energy-stable schemes based on SAV techniques and RK methods. To obtain arbitrarily high-order and linearly implicit schemes, they proposed an iterative procedure to get a sufficiently accurate prediction of $u$, which was then used to discretize the nonlinear terms. In this section, we demonstrate that every SAV-RK methods can be viewed as an ARK method applied to some appropriate reformulations of \eqref{general_gradient_flow}. This new perspective enables us to systematically investigate the order conditions of existing works, utilizing the order conditions of ARK approaches. Employing their SAV-RKPC(M) methods to gradient flows leads to.  
\begin{alg}[SAV-RKPC(M)]\label{rkpc}
    Given a fundamental RK method with coefficients $(A, b, c)$, the intermediate variables are calculated by the prediction-correction procedure as

    1. Prediction: We initialize $u_{ni}^{(0)} = u^0, \ q_{ni}^{(0)} = q^0$. Let $M$ be a positive integer. Then, we iteratively compute $u_{ni}^{(m)}$ and $q_{ni}^{(m)}$ for $m = 0$ to $M-1$ by
    \begin{equation*}
    \left\lbrace
    \begin{aligned}
        &u_{ni}^{(m+1)} = u^n + \tau \sum\limits_{j=0}^{s-1} a_{ij} \dot{u}_{nj}^{(m+1)}, \ q_{ni}^{(m+1)} = q^n + \tau \sum\limits_{j=0}^{s-1}a_{ij} \dot{q}_{nj}^{(m+1)} \\
        & \dot{u}_{ni}^{(m+1)} = \mathcal{G} \big( \mathcal{L}u_{ni}^{(m+1)} + 2q_{ni}^{(m)} \frac{\delta W}{\delta u} [u_{ni}^{(m)}] - 2 q_{ni}^{(m)} \nabla \cdot \frac{\delta W}{\delta \nabla u}[u_{ni}^{(m)}] \big), \\ 
        & \dot{q}_{ni}^{(m+1)} = (\frac{\delta W}{\delta u} [u_{ni}^{(m+1)}], \dot{u}_{ni}^{(m+1)}) + (\frac{\delta W}{\delta \nabla u}[u_{ni}^{(m+1)}], \nabla \dot{u}_{ni}^{(m+1)}).
    \end{aligned}
    \right.
    \end{equation*}
    If $\max\limits_i \|u_{ni}^{(m+1)} - u_{ni}^{(m)}\|_\infty \leq TOL$, we stop the iterations and set $u_{ni}^{\star} = u_{ni}^{(m+1)}$. Otherwise, we set $u_{ni}^\star = u_{ni}^{(M)}$.

    2. Correction: For the predicted $u_{ni}^\star$, we compute the intermediate stages $\dot{u}_{ni}$ and $\dot{q}_{ni}$ as follows:
    \begin{equation*}
    \left\lbrace
    \begin{aligned}
       & u_{ni} = u^n + \tau \sum\limits_{j=0}^{s-1} a_{ij} \dot{u}_{nj}, \ q_i^n = q^n + \tau \sum\limits_{j=0}^{s-1} a_{ij} \dot{q}_{nj}, \\ 
       & \dot{u}_{ni} = \mathcal{G} ( \mathcal{L}u_{ni} + 2q_{ni} \frac{\delta W}{\delta u}[u_{ni}^\star] - 2q_{ni} \nabla \cdot \frac{\delta W}{\delta \nabla u}[u_{ni}^{\star}] ),\\
       &\dot{q}_{ni} = ( \frac{\delta W}{\delta u} [u_{ni}^\star], \dot{u}_{ni}) + ( \frac{\delta W}{\delta \nabla u} [u^\star_{ni}], \nabla \dot{u}_{ni} ),
    \end{aligned}
    \right.
    \end{equation*}
    and then update $u^{n+1}$, $q^{n+1}$ by
    \begin{equation*}
        u^{n+1} =u^n + \tau \sum\limits_{i=0}^{s-1} b_i \dot{u}_{ni}, \ q^{n+1} = q^n + \tau \sum\limits_{i=0}^{s-1} b_i \dot{q}_{ni}.
    \end{equation*}
\end{alg}
We display that Algorithm \ref{rkpc} can be regarded as an ARK method for the following alternative reformulation of \eqref{general_gradient_flow}.
\begin{equation} \label{sav_tark_reformulation2}
\left\lbrace
\begin{aligned}
    w_t &= \mathcal{G} \big( \mathcal{L}v + 2 r \frac{\delta W}{\delta u}[v] - 2 r \nabla \cdot \frac{\delta W}{\delta \nabla u}[v] \big), \\
    v_t &= \mathcal{G} \big( \mathcal{L}v + 2r \frac{\delta W}{\delta u}[v] - 2r \nabla \cdot \frac{\delta W}{\delta \nabla u}[v] \big), \\
    u_t &= \mathcal{G} \big( \mathcal{L}u + 2 q \frac{\delta W}{\delta u}[v] - 2q \nabla \cdot \frac{\delta W}{\delta \nabla u}[v]\big), \\
    r_t &= \big(\frac{\delta W}{\delta u}[v], v^{\mathcal{L}}_t\big) + \big(\frac{\delta W}{\delta u}[w], v^{\mathcal{\mathcal{N}}}_t\big) + \big( \frac{\delta W}{\delta \nabla u}[v], \nabla v_t^\mathcal{L} \big) + \big( \frac{\delta W}{\delta \nabla u}[w], \nabla v^\mathcal{N}_t \big), \\
    q_t &= \big( \frac{\delta W}{\delta u}[v], u_t \big) + \big( \frac{\delta W}{\delta \nabla u}[v], \nabla u_t \big),
\end{aligned}
\right.
\end{equation} 
where 
\begin{equation*}
    v_t^\mathcal{L} = \mathcal{G} \mathcal{L} v, \ v^\mathcal{N}_t = \mathcal{G} (2 r \frac{\delta W}{\delta u}[v] - 2 r \frac{\delta W}{\delta \nabla u}[v]).
\end{equation*}
Let us explain the equivalence between \eqref{sav_tark_reformulation2} and \eqref{general_gradient_flow}. Subtracting the second from the first equation of \eqref{sav_tark_reformulation2} and investigating the initial condition, we obtain:
\begin{equation*}
	v(t) = w(t), \quad \forall \ 0 < t \leq T.
\end{equation*}
Substituting this formula into the fourth equation of \eqref{sav_tark_reformulation2}, and subtracting the third equation of \eqref{sav_tark_reformulation2} from the second, the fifth equation of \eqref{sav_tark_reformulation2} from the fourth resulting in  
\begin{equation}\label{diff_eq2}
\begin{aligned}
    &u_t - v_t = \mathcal{G}\big(  \mathcal{L}(u - v) + 2(q - r) (\frac{\delta W}{\delta u}[v] - \nabla \cdot \frac{\delta W}{\delta \nabla u}[v]) \big), \\ 
    &q_t - r_t = \big( \frac{\delta W}{\delta u}[v], u_t - v_t\big) + \big( \frac{\delta W}{\delta \nabla u}[v], \nabla u_t - \nabla v_t \big).
\end{aligned}
\end{equation}
Taking the inner products on both sides of the first and the second equations in \eqref{diff_eq2} with $\mathcal{L}(u - v) + 2(q - r) (\frac{\delta W}{\delta u}[v] - \nabla \cdot \frac{\delta W}{\delta \nabla u}[v])$ and $2(q - r)$, respectively, and adding the resulting equations together yield:
\begin{equation*}
    \frac{1}{2} (u - v, \mathcal{L}(u - v)) + (q - r)^2 \leq 0.
\end{equation*}
This implies $u(t) = v(t), \ q(t) = r(t)$.  The remaining steps follow the proof of Lemma \ref{lem_equivalence}, which we omit here for brevity.

Let $\bm{z} = (w, v, u, r, q)^\mathrm{T}$. We split the reformulated system \eqref{sav_tark_reformulation2} as follows
\begin{equation}\label{three_partitioned}
    \bm{z}_t = \Phi_1(\bm{z}) + \Phi_2(\bm{z}) + \Phi_3(\bm{z}) + \Phi_4(\bm{z}), 
\end{equation}
where 
\begin{equation*}
    \Phi_1(\bm{z}) = 
    \left(
    \begin{smallmatrix}
        0 \\
        \mathcal{G} \mathcal{L} v \\
        \mathcal{G} ( \mathcal{L}u + 2 q \frac{\delta W}{\delta u}[v] - 2q \nabla \cdot \frac{\delta W}{\delta \nabla u}[v]) \\
        ( \frac{\delta W}{\delta u}[v], v^\mathcal{L}_t ) + ( \frac{\delta W}{\delta \nabla u}[v], \nabla v_t^\mathcal{L} ) \\
        ( \frac{\delta W}{\delta u}[v], u_t ) + ( \frac{\delta W}{\delta \nabla u}[v], \nabla u_t )
    \end{smallmatrix}
    \right), \quad
    \Phi_2(\bm{z}) = 
    \left(
    \begin{smallmatrix}
        0 \\
        \mathcal{G} ( 2 r \frac{\delta W}{\delta u}[v] - 2 r \nabla \cdot \frac{\delta W}{\delta \nabla u}[v]  ) \\ 
        0 \\
        ( \frac{\delta W}{\delta u}[v], w^\mathcal{N}_t ) + ( \frac{\delta W}{\delta \nabla u}[v], \nabla w_t^\mathcal{N} ) \\
        0
    \end{smallmatrix}
    \right),
\end{equation*}
\begin{equation*}
   \Phi_3(\bm{z}) = 
   \left(
    \begin{smallmatrix}
        \mathcal{G}\mathcal{L} v \\ 
        0 \\ 
        0 \\ 
        0 \\ 
        0
    \end{smallmatrix}
   \right),\quad
   \Phi_4(\bm{z}) =
   \left(
    \begin{smallmatrix}
        \mathcal{G} ( 2 r \frac{\delta W}{\delta \mathbf{u}}[\mathbf{v}] - 2 r \nabla \cdot \frac{\delta W}{\delta \nabla \mathbf{u}}[\mathbf{v}] ) \\ 
        0 \\ 
        0 \\ 
        0 \\ 
        0
    \end{smallmatrix}
   \right).
\end{equation*}
Employing four different RK methods to \eqref{three_partitioned} yields the following SAV-ARKII method
\begin{equation*}
\left\lbrace
\begin{aligned}
    \bm{z}_{ni} &= \bm{z}^n + \tau \sum\limits_{j=0}^{s-1} \big( a_{ij} \Phi_1(\bm{z}_{nj}) + \widehat{a}_{ij} \Phi_2(\bm{z}_{nj}) + \widetilde{a}_{ij} \Phi_3 (\bm{z}_{nj}) + \overline{a}_{ij} \Phi_4(\bm{z}_{nj}) \big), \\
    \bm{z}^{n+1} &= \bm{z}^n + \tau \sum\limits_{i=0}^{s-1} b_i (\Phi_1 (\bm{z}_{ni}) + \Phi_2(\bm{z}_{ni}) + \Phi_3 (\bm{z}_{ni}) + \Phi_4(\bm{z}_{ni}) ).
\end{aligned}
\right.
\end{equation*}
Furthermore, we rewrite the above scheme componentwisely and employ the techniques outlined in Section \ref{sec3-1} to modify the obtained scheme, ultimately resulting in the SAV-MARKII method as shown below.

\begin{alg}[SAV-MARKII]\label{mark2}
    We solve the intermediate stages from
    \begin{equation*}
    \left\lbrace
    \begin{aligned}
        &w_{ni} = u^n + \tau \sum\limits_{j=0}^{s-1} ( \widetilde{a}_{ij} \dot{v}_{nj}^\mathcal{L} + \overline{a}_{ij} \dot{v}^{\mathcal{N}}_{nj} ), \\
        &v_{ni} =  u^n + \tau \sum\limits_{j=0}^{s-1} (a_{ij} \dot{v}_{nj}^\mathcal{L} + \widehat{a}_{ij} \dot{v}^{\mathcal{N}}_{nj}), \ r_{ni} = r^n + \tau \sum\limits_{j=0}^{s-1} \big( a_{ij} \dot{r}_{nj}^\mathcal{L} + \widehat{a}_{ij} \dot{r}_{nj}^\mathcal{N} \big),  \\
        &u_{ni} = u^n + \tau \sum\limits_{j=0}^{s-1} a_{ij} \dot{u}_{nj}, \ q_{ni} = q^n + \tau \sum\limits_{j=0}^{s-1} a_{ij} \dot{q}_{nj}, \\ 
        & \dot{v}_{ni}^\mathcal{L} = \mathcal{G}\mathcal{L} v_{ni}, \ \dot{r}_{ni}^\mathcal{L} = ( \frac{\delta W}{\delta u}[v_{ni}], \dot{v}_{ni}^\mathcal{L}) + (\frac{\delta W}{\delta \nabla u}[v_{ni}], \nabla \dot{v}_{ni}^\mathcal{L}  ), \\ 
        & \dot{v}_{ni}^\mathcal{N} = \mathcal{G} ( 2r_{ni} \frac{\delta W}{\delta u}[v_{ni}] - 2r_{ni} \frac{\delta W}{\delta \nabla u}[v_{ni}] ) , \ \dot{r}_{ni}^\mathcal{N} = ( \frac{\delta W}{\delta u}[v_{ni}], \dot{v}_{ni}^\mathcal{N}) + (\frac{\delta W}{\delta \nabla u}[v_{ni}], \nabla \dot{v}_{ni}^\mathcal{N}  ), \\ 
        & \dot{u}_{ni} = \mathcal{G}(\mathcal{L} u_{ni} + 2q_{ni} \frac{\delta W}{\delta u}[v_{ni}] - 2q_{ni} \frac{\delta W}{\delta \nabla u}[v_{ni}]) , \ \dot{q}_{ni} = (\frac{\delta W}{\delta u} [v_{ni}], \dot{u}_{ni} ) + (\frac{\delta W}{\delta \nabla u} [v_{ni}], \nabla \dot{u}_{ni}). 
    \end{aligned}
    \right.
    \end{equation*}
    Then, we update 
    \begin{equation*}
        u^{n+1} = u^n + \tau \sum\limits_{i=0}^{s-1} b_i \dot{u}_{ni}, \ q^{n+1} = q^n + \tau \sum\limits_{i=0}^{s-1} b_i \dot{q}_{ni}.
    \end{equation*}
\end{alg}
\begin{thm}\label{thm_order_rkpc}
    Consider a SAV-RKPC(M) method associated with the fundamental RK method $(A, b, c)$ of stage $s$. Then, it can be regarded as a SAV-MARKII method with the tableaux
    \begin{equation} \label{arktab_grkpc}
    \begin{aligned}
        &\begin{array}{c | c }
            \mathbf{c} & \mathbf{A} \\ 
            \hline
                       & \mathbf{b}^\mathrm{T}
        \end{array}
        =
    \begin{array}{c | c c}
        \mathbf{0}           &   O & O  \\
        \mathbf{1}_M \otimes c & O  & I_M \otimes A \\ 
        \hline
                 & \mathbf{0}^\mathrm{T} &  (\mathbf{e}_M \otimes b)^\mathrm{T}
    \end{array}, \quad
        \begin{array}{c | c }
            \widehat{\mathbf{c}} & \widehat{\mathbf{A}} \\ 
            \hline
                                 & \widehat{\mathbf{b}}^\mathrm{T}
        \end{array}
        =
    \begin{array}{c | c c}
        \mathbf{0}           &  O  & O  \\
        \mathbf{1}_M \otimes c &  I_M \otimes A & O \\ 
        \hline
                               & \mathbf{0}^\mathrm{T} &  (\mathbf{e}_M \otimes b)^\mathrm{T}
    \end{array}, \\
    &\begin{array}{c | c }
            \widetilde{\mathbf{c}} & \widetilde{\mathbf{A}} \\ 
            \hline
                                 & \widetilde{\mathbf{b}}^\mathrm{T}
    \end{array}
        =
    \begin{array}{c | c c}
        \mathbf{1}_M \otimes c           &  O  & I_M \otimes A  \\
         c &  O & A \\ 
        \hline
                               & \mathbf{0}^\mathrm{T} &  (\mathbf{e}_M \otimes b)^\mathrm{T}
    \end{array}, \quad
    \begin{array}{c | c }
            \overline{\mathbf{c}} & \overline{\mathbf{A}} \\ 
            \hline
                                 & \overline{\mathbf{b}}^\mathrm{T}
    \end{array}
        =
    \begin{array}{c | c c}
        \mathbf{1}_M \otimes c           &  I_M \otimes A  & O  \\
         c &  A & O \\ 
        \hline
                               & \mathbf{0}^\mathrm{T} &  (\mathbf{e}_M \otimes b)^\mathrm{T}
    \end{array},
    \end{aligned}
    \end{equation}
  where $I_s$ represents the identity matrix, $\mathbf{e}_M = (0, 0, \cdots, 1)^{\mathrm{T}}$, and $\otimes$ denotes the Kronecker product. Notice that, we have $w_{n,i+ms} = v_{n,i+(m+1)s}$ and $r_{n,i+ms} = q_{n,i+(m+1)s}$ in Algorithm \ref{mark2}. In addition, the intermediate stages of Algorithm \ref{rkpc} and \ref{mark2} are related as follows:
    \begin{footnotesize}
    \begin{equation*}
        (\dot{u}_{ni}^{(m)}, \dot{q}_{ni}^{(m)}, q_{ni}^{(m)}, u_{ni}^{(m)}) = (\dot{v}^\mathcal{L}_{n,i+ms}+\dot{v}^\mathcal{N}_{n,i+ms},  \dot{\widehat{q}}_{n,i+ms}, \widehat{q}_{n,i+ms},  v_{n,i+ms}), \ u_{ni}^{\star} = v_{n,i+Ms}.
    \end{equation*}
\end{footnotesize}
\end{thm}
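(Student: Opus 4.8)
The plan is to prove the identification by \emph{unrolling} the prediction--correction iteration of Algorithm \ref{rkpc} into the $(M+1)s$ stages of the SAV-MARKII scheme (Algorithm \ref{mark2}), indexing those stages by the pair $(m,i)\mapsto i+ms$ with $m\in\{0,\dots,M\}$ the iteration level and $i\in\{0,\dots,s-1\}$ the underlying RK stage. The Kronecker-product tableaux \eqref{arktab_grkpc} are engineered so that the block-diagonal factor $I_M\otimes A$ realizes the \emph{implicit, same-level} coupling of the linear part $\dot v^{\mathcal L}$, while the one-level-shifted blocks realize the \emph{explicit, previous-level} coupling of the nonlinear part $\dot v^{\mathcal N}$. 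Under this dictionary the asserted correspondence becomes a stage-by-stage verification carried out by induction on $m$.

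First I would write Algorithm \ref{mark2} out componentwise with the tableaux \eqref{arktab_grkpc} and establish the shift identity $w_{n,i+ms}=v_{n,i+(m+1)s}$. This is purely structural: the tableaux are built so that row $i+ms$ of $(\widetilde{\mathbf A},\overline{\mathbf A})$, which defines $w_{n,i+ms}$, coincides with row $i+(m+1)s$ of $(\mathbf A,\widehat{\mathbf A})$, which defines $v_{n,i+(m+1)s}$. Since both quantities are expressed through the \emph{same} base value $u^n$ and the \emph{same} stage derivatives $\dot v^{\mathcal L}_{n,\cdot},\dot v^{\mathcal N}_{n,\cdot}$, they obey identical affine relations and therefore agree. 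The companion identity $r_{n,i+ms}=q_{n,i+(m+1)s}$ for the scalar variables is deferred to the main induction, where $\dot r^{\mathcal L}+\dot r^{\mathcal N}$ at level $m$ is shown to coincide with $\dot q$ at level $m+1$.

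Next I would run the main induction on $m$ to prove the stage dictionary $(\dot u_{ni}^{(m)},\dot q_{ni}^{(m)},q_{ni}^{(m)},u_{ni}^{(m)})=(\dot v^{\mathcal L}_{n,i+ms}+\dot v^{\mathcal N}_{n,i+ms},\dot{\widehat{q}}_{n,i+ms},\widehat{q}_{n,i+ms},v_{n,i+ms})$. In the base case $m=0$, the vanishing first block-rows of $\mathbf A$ and $\widehat{\mathbf A}$ force $v_{n,i}=u^n$ and $\widehat{q}_{n,i}=q^n$, matching the initialization $u_{ni}^{(0)}=u^n$, $q_{ni}^{(0)}=q^n$ of the prediction loop. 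For the inductive step, the linear contribution to $v_{n,i+(m+1)s}$ is implicit through the diagonal $A$-block, reproducing $\mathcal G\mathcal L u_{ni}^{(m+1)}$, while the nonlinear contribution is drawn --- through the one-level shift together with the identity just established --- from the level-$m$ predictor stages, which by the inductive hypothesis coincide with the $m$-th iterates $u_{ni}^{(m)}$ and $q_{ni}^{(m)}$. This is exactly the frozen argument at which Algorithm \ref{rkpc} evaluates $\frac{\delta W}{\delta u}$ and $\frac{\delta W}{\delta\nabla u}$ during the $(m+1)$-st prediction, so the two recursions coincide; the accompanying update of $\dot{\widehat{q}}$ then yields the outstanding $r$--$q$ identity. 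Feeding the level-$M$ stages into the correction phase --- where the weights $\mathbf b=\mathbf e_M\otimes b$ retain only the final level --- identifies $u_{ni}^{\star}=v_{n,i+Ms}$ and shows that the $u$- and $q$-updates of Algorithm \ref{mark2} reproduce the correction and final update of Algorithm \ref{rkpc}.

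The hard part will be the index bookkeeping through the Kronecker products: one must verify that the shifts built into $\widehat{\mathbf A}$ and $\overline{\mathbf A}$ couple level $m+1$ strictly to level $m$ --- not to the same or a later level --- so that the nonlinearity is genuinely frozen at the previous iterate rather than solved implicitly, and that the diagonal blocks of $\mathbf A$ and $\widetilde{\mathbf A}$ keep the linear part implicit within each level. Equally delicate is confirming that the auxiliary copy $w$ and its scalar $r$, introduced solely to split the predictor's linear and nonlinear contributions to the auxiliary variable, propagate consistently; this is precisely what the shift identities guarantee. Once these structural checks are secured, the remaining equalities reduce to direct substitution.
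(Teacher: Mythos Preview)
Your proposal is correct and follows the natural direct-verification approach. The paper itself does not spell out a proof of this theorem; it states the correspondence and the tableaux and then immediately proceeds to use the result, treating it as a structural observation left to the reader. Your plan of unrolling the prediction--correction loop into $(M+1)s$ stages, reading off the block structure of the Kronecker tableaux so that $I_M\otimes A$ gives implicit same-level linear coupling while the shifted blocks give explicit previous-level nonlinear coupling, and then running an induction on the iteration index $m$, is exactly how one verifies the identification; the shift identity $w_{n,i+ms}=v_{n,i+(m+1)s}$ and the final-level selection via $\mathbf b=\mathbf e_M\otimes b$ are the key bookkeeping points, and you have identified both.
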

By Theorem \ref{thm_order_rkpc}, the consistency error of the SAV-RKPC(M) can be investigated by the order conditions of the generalized ARK methods straightforwardly. Readers are referred to \cite{ark_general} for convenience. Taking the fourth-order Gauss SAV-RKPC (SAV-GRK4PC) used in \cite{ieq_gong} as an example, the SAV-GRK4PC(1), SAV-GRK4PC(2), SAV-GRK4PC(3) methods arrive at second-, third- and fourth-order, respectively, which agrees with the numerical experiments proposed in \cite{sav_nlsw}.
\begin{rmk}
    Although we have demonstrated that the SAV-GRK4PC(3) achieves fourth-order accuracy, it is advisable to carry out additional iterative steps in practical computations to guarantee the stability of the proposed method.
\end{rmk}

\section{Numerical experiments} \label{sec5}
In this section, we demonstrate the effectiveness of our methods in solving the 2D AC, CH, and MBE equations. The spatial domain is $\Omega = (x_L, x_R) \times (y_L, y_R)$, and periodic boundary conditions are employed in all examples. To guarantee both accuracy and efficiency, we use the Fourier pseudo-spectral method for spatial discretization. Let $N_x$ and $N_y$ be positive integers. The spatial domain is uniformly partitioned with step sizes $h_x = \frac{x_R - x_L}{N_x}$ and $h_y = \frac{y_R - y_L}{N_y}$. We define $\Omega_N = \{ (x_i, y_j) |x_i = x_L + i h_x, \ y_j = y_L + j h_y \}$, and $\mathbb{M}_N$ denotes the space of periodic grid functions on $\Omega_N$. We use the notations $\nabla_N$, $\nabla_N \cdot$, and $\Delta_N$ to represent discrete gradient, divergence, and Laplace operators to the Fourier pseudo-spectral method, respectively. Readers are referred to \cite{ju_mbe} for details. Given $u, v \in \mathbb{M}_N$, the discrete $L^2$ inner product, discrete $L^2$ and $L^\infty$ norms are 
\begin{equation*}
    (u, v)_N = h_x h_y \sum\limits_{j=0}^{N_x-1}\sum\limits_{k=0}^{N_y-1} u_{jk} v_{jk}, \ \|u\|_N = \sqrt{(u, u)_N},  \ \|u\|_\infty = \max\limits_{0 \leq j \leq N_x-1 \atop 0 \leq k \leq N_y-1} |u_{jk}|.
\end{equation*}

\subsection{AC equation}
To validate the convergence results presented in Theorem \ref{thm-convergence}, we consider the following AC equation
\begin{equation*}
	u_t =  \varepsilon^2 \Delta u + u - u^3,
\end{equation*}
which can be obtained by setting $\mathcal{G} = -1$ and $\mathcal{F}[u] = \int_\Omega  \frac{\varepsilon^2}{2} |\nabla u|^2 + \frac{1}{4} (u^2 - 1)^2 d\mathbf{x}$ in \eqref{general_gradient_flow}. 

Employing the Fourier spectral method to \eqref{err_01}, the fully discrete system of the AC equation is to find $(\bm{u}_{ni}, \bm{v}_{ni}, q_{ni}) \in \mathbb{M}_N \times \mathbb{M}_N \times \mathbb{R}$ and $(\bm{u}^{n+1}, q^{n+1}) \in \mathbb{M}_N \times \mathbb{R}$, such that
\begin{equation*}
\begin{aligned}
    & \bm{v}_{ni} = \bm{u}^n +\tau \sum\limits_{j=0}^{s-1} (a_{ij} \Delta_N \bm{v}_{nj} - 2 \widehat{a}_{ij} q_{nj} W^\prime(\bm{v}_{nj})), \ \bm{u}_{ni} = \bm{u}^n + \tau \sum\limits_{j=0}^{s-1} a_{ij} \dot{\bm{u}}_{nj}, \\ 
    & q_{ni} = q^n + \tau \sum\limits_{j=0}^{s-1} a_{ij} \dot{q}_{nj}, \ \bm{u}^{n+1} = \bm{u}^n + \sum\limits_{i=0}^{s-1} b_i \dot{\bm{u}}_{ni}, \ q^{n+1} = q^n + \sum\limits_{i=0}^{s-1} b_i \dot{q}_{ni},
\end{aligned}
\end{equation*}
where 
\begin{equation*}
    \dot{\bm{u}}_{ni} = \Delta_N \bm{u}_{ni} - 2q_{ni} W^\prime (\bm{v}_{ni}), \ \dot{q}_{ni} = (W^\prime(\bm{v}_{ni}), \dot{\bm{u}}_{ni})_N, \ W^\prime(\bm{u}) = \frac{F^\prime(\bm{u})}{2\sqrt{ (F(\bm{u}), 1 )_N +C_0 }}.
\end{equation*}
\begin{rmk}
	It is worth mentioning that the discrete operator $\Delta_N$ satisfies the summation-by-parts formula. By following the procedure outlined in the proof of Theorem \ref{thm-energy_stable} and \ref{thm-solvability}, we can confirm the energy-stability and solvability of the above fully-discrete scheme.
\end{rmk}
\begin{figure}[H] 
	\centering
	\includegraphics[width=0.35\linewidth]{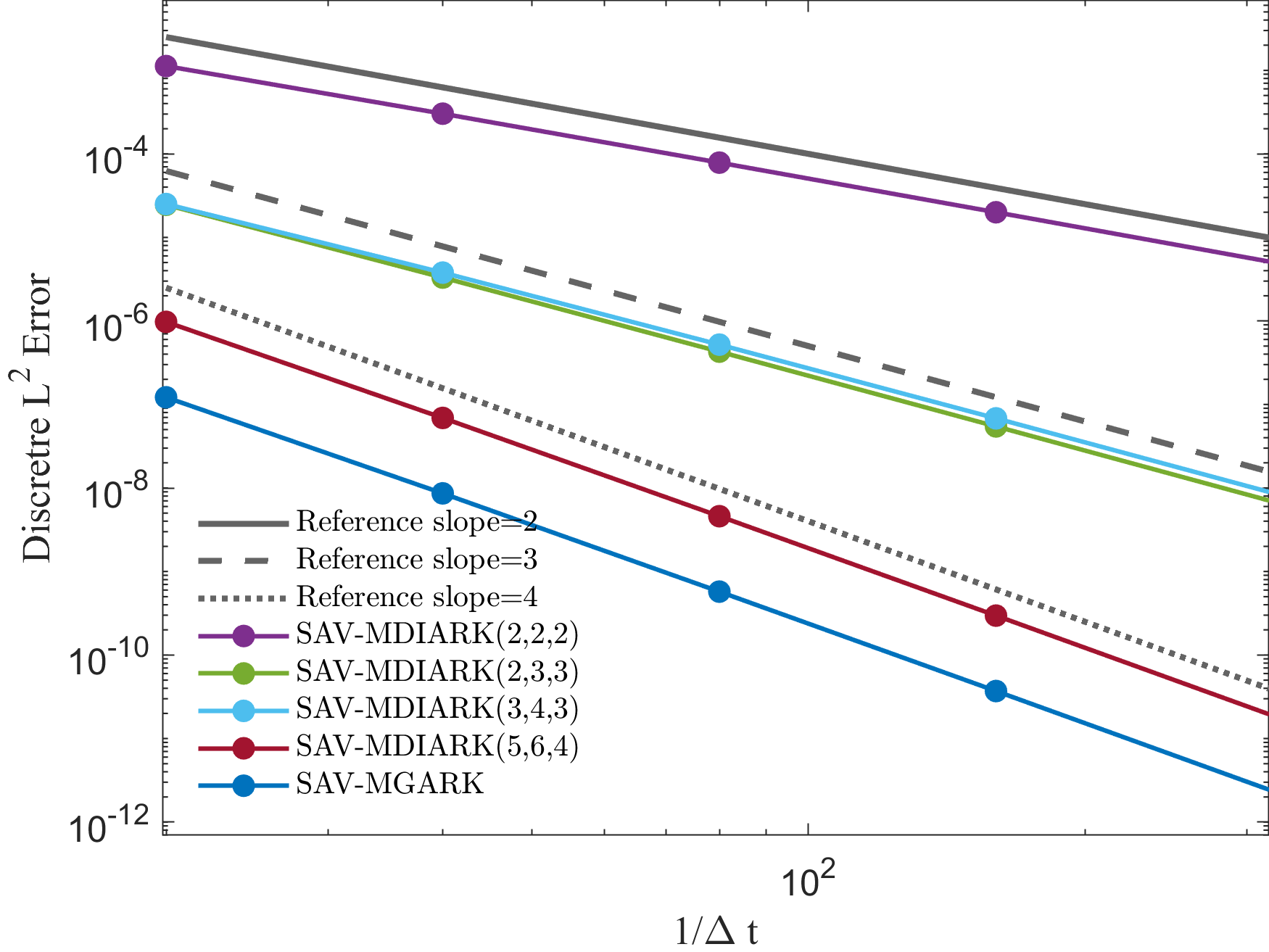}
	\caption{Temporal convergence tests of a various of different proposed methods.} \label{fig-ac_accuracy}
\end{figure}

We set the computational domain as $\Omega = (0, 1)^2$, the parameter as $\varepsilon = 0.01$, and the initial condition as $u_0 = 0.1 \sin (2 \pi x) \sin (2 \pi y)$. Since the exact solution is unavailable, we use the solution obtained by the SAV-MDIARK(5,6,4) method with $N = 512$, and $\tau = 10^{-4}$ at the final time $T = 1$ as a reference. Then, refinement test in time is conducted with $N = 128$ and different time steps $\tau = 0.1 \times 2^{-k} \ (k = 1,2,3,4,5)$. Figure \ref{fig-ac_accuracy} displays the discrete $L^2$-norm error of the solution at $T = 1$ computed by various methods as a function of the time step size in the logarithmic scale. All the methods achieve their respective accuracy.

\subsection{CH equation}
We consider the following Cahn-Hilliard model for immiscible binary fluids
\begin{equation}\label{cahn_hilliard}
    u_t =  \lambda \Delta (-\varepsilon^2 \Delta u + u^3 - u),
\end{equation}
where $\lambda$ is a mobility parameter, and $\varepsilon$ represents the width of the diffuse interface. The corresponding free energy functional is
\begin{equation}\label{fenergy_chan_hilliard}
    \mathcal{F}[u] = \int_\Omega  \frac{\varepsilon^2}{2}|\nabla u|^2 + \frac{1}{4}(u^2 - 1)^2 d\mathbf{x}.
\end{equation}
 We introduce an auxiliary variable $q = \sqrt{\frac{1}{4}\int_\Omega (u^2 - 1 -\kappa)^2 d\mathbf{x} + C|\Omega|}$, where $\kappa$ is a stabilized parameter. The energy functional \eqref{fenergy_chan_hilliard} is transformed into
\begin{equation}
    \mathcal{F}[u, q] = \frac{\varepsilon^2}{2} \|\nabla u\|^2 + \frac{\kappa}{2} \|u\|^2 + q^2 - \frac{\kappa^2 + 2\kappa + 4C}{4} |\Omega|.
\end{equation}
 \eqref{cahn_hilliard} is then reformulated into an equivalent model, as shown below
\begin{equation}\label{equivalent_cahn_hilliard}
\left\lbrace
\begin{aligned}
    u_t &= \lambda \Delta \left( -\varepsilon^2 \Delta u + \kappa u + f_\kappa(u) q\right), \\
    q_t &= \frac{1}{2}\left(f_\kappa(u), u_t\right). \\
\end{aligned}
\right.
\end{equation}

    We perform convergence tests in time by considering \eqref{cahn_hilliard} in the spatial domain $\Omega = (0, 2\pi)^2$ with specified parameters $\gamma = 0.01$ and $\varepsilon = 1$. As the exact solution of \eqref{cahn_hilliard} is not available, we construct a manufactured solution $\phi(x, y, t) = \sin(x)\sin{(y)}\cos{(t)}$ to \eqref{cahn_hilliard} by introducing a nonhomogeneous source term to the right-hand side of \eqref{cahn_hilliard}. We use $128 \times 128$ wave numbers for the spatial discretization. Subsequently, \eqref{cahn_hilliard} will be integrated using various methods until $T = 1$ with different time steps $\tau = 0.2 \times (2k)^{-1} \ (k = 1,2,3,4,5,6,7,8)$. The numerical solution at the final time is recorded to evaluate errors in the refinement tests.
\begin{figure}[H] 
    \centering
    \includegraphics[width=0.35\linewidth]{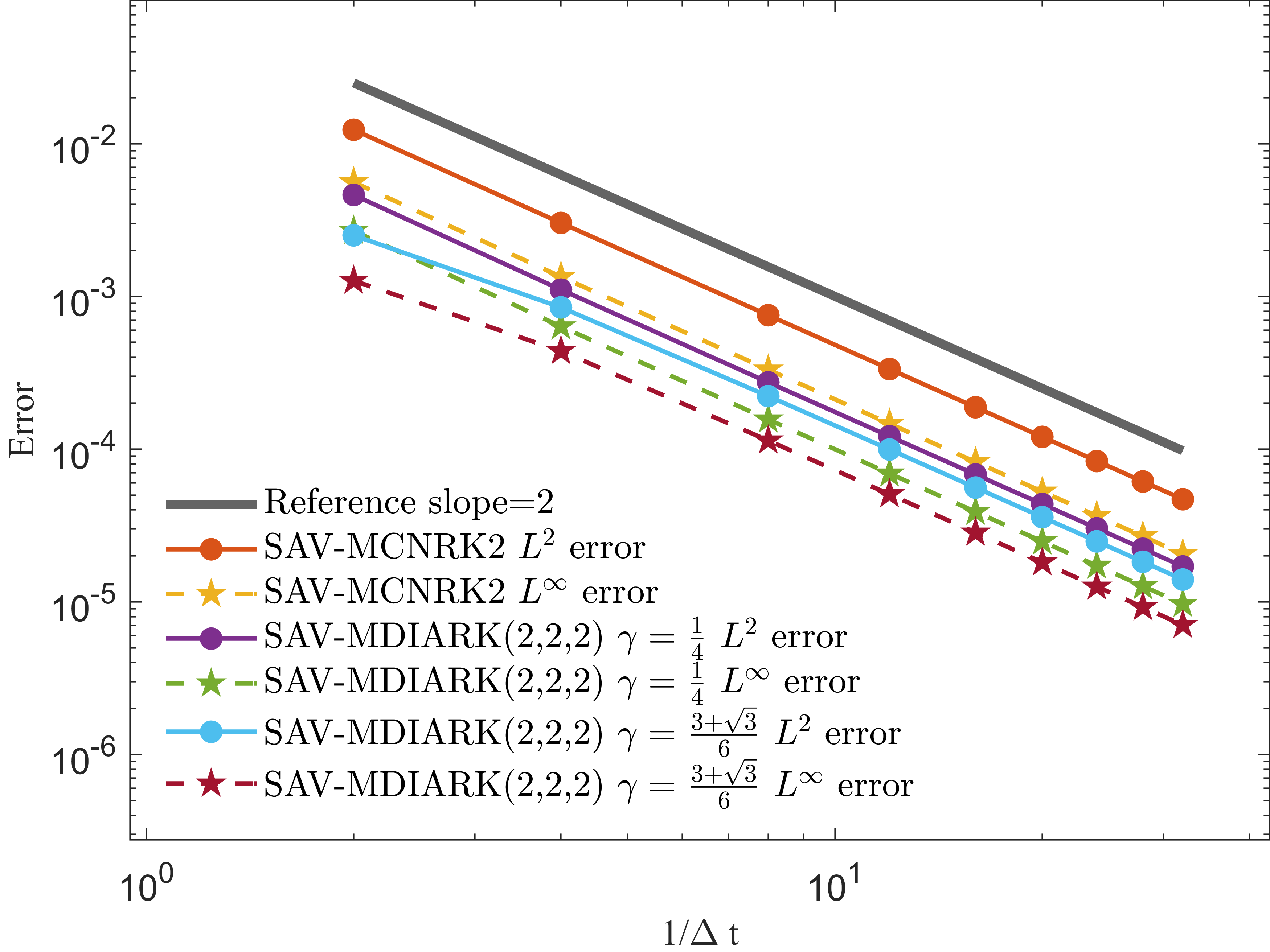} \quad
    \includegraphics[width=0.35\linewidth]{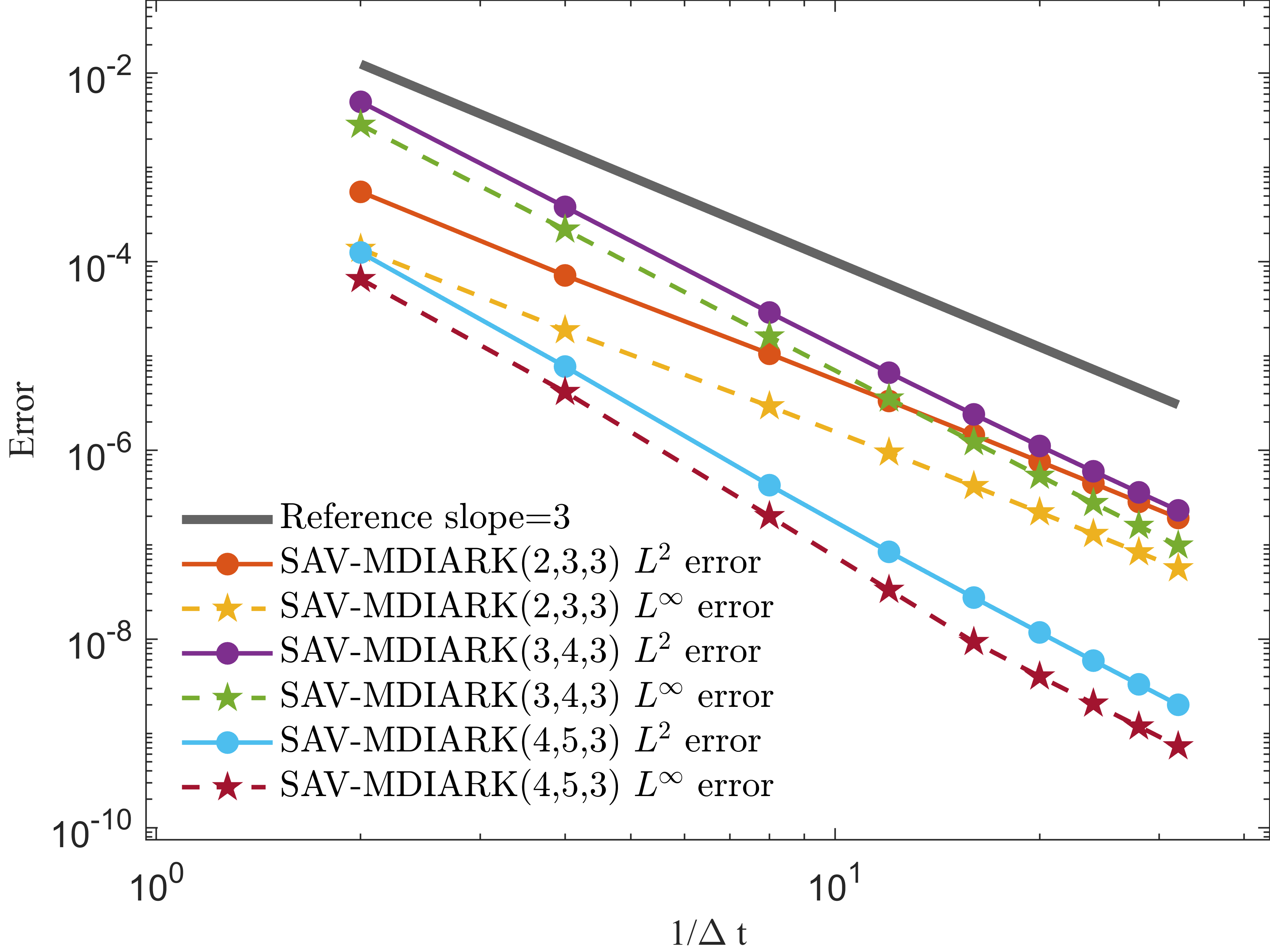} \quad
    \includegraphics[width=0.35\linewidth]{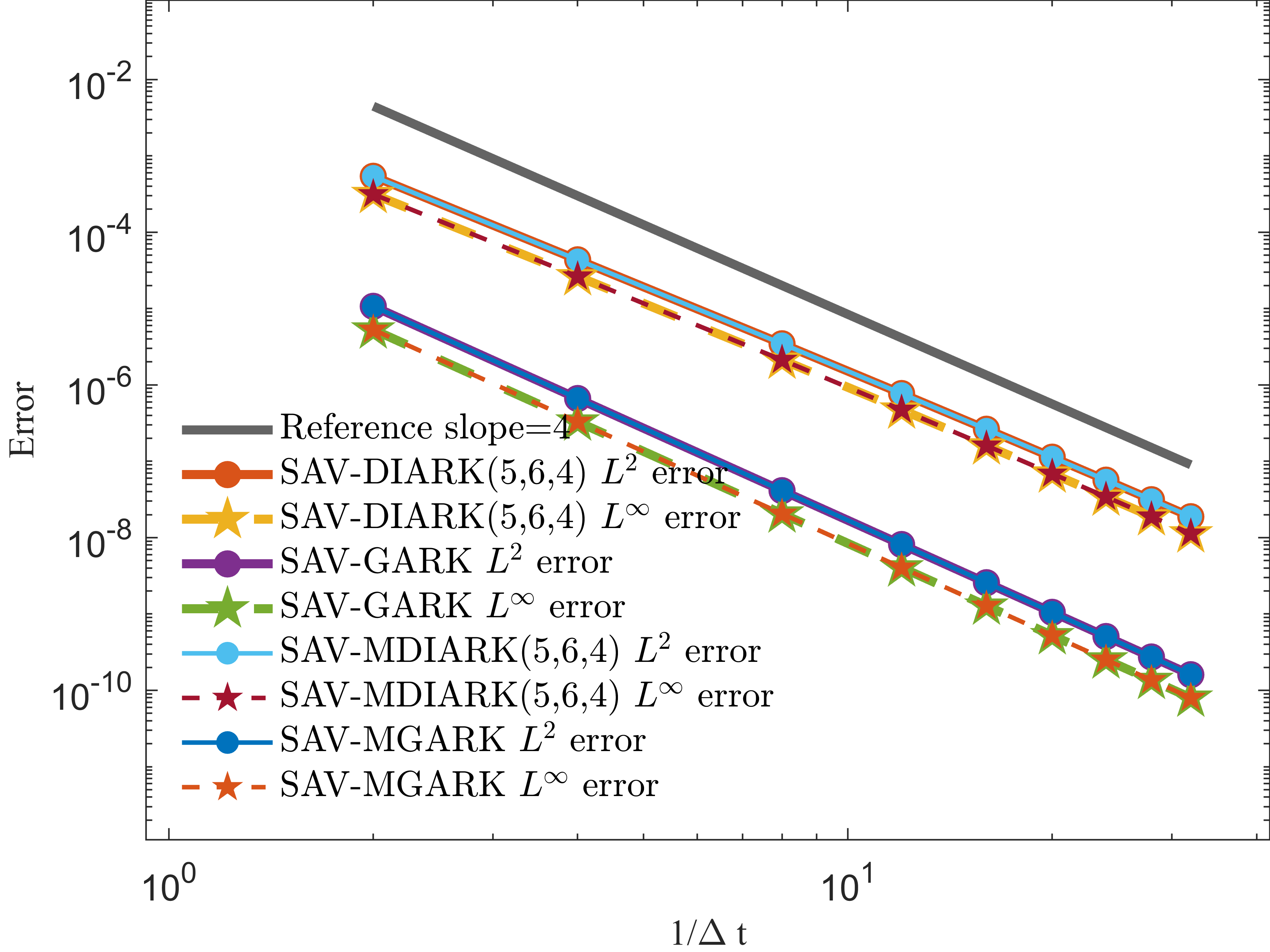}
    \caption{Temporal convergence tests of a various of different proposed methods.} \label{fig-ch_accuracy1}
\end{figure}
Figure \ref{fig-ch_accuracy1} plots the $L^2$ and $L^\infty$ errors of different methods against the time step in a logarithmic scale. All the methods achieve the expected convergence rate. Among the second-order schemes, the SAV-MDIARK(2,2,2) method exhibits higher accuracy than the SAV-MCNRK2 method. Additionally, when $\gamma = \frac{3 + \sqrt{3}}{6}$, the SAV-MDIARK(2,2,2) scheme performs to be better than when $\gamma = \frac{1}{4}$. Despite the latter preserving the dissipative rate, the former is more stable in practice. Among the third-order schemes, the SAV-MDIARK(4,5,3) exhibits the highest accuracy and unexpectedly results in superconvergence in this test. This phenomenon can be attributed to the smoothness of the provided solution. Further accuracy tests of this method will be conducted in subsequent examples. When investigating the fourth-order schemes, we present the results of both the SAV-MARK methods and their corresponding SAV-ARK methods. Notably, the convergence rate of the SAV-MARK methods is consistent with that of the SAV-ARK methods, confirming that the modified Algorithm \ref{mark_sav_alg} possesses the same accuracy as Algorithm \ref{ark_sav_alg}.

To thoroughly investigate the performance of the proposed schemes, we consider the CH equation \eqref{cahn_hilliard} with the initial condition 
\begin{equation}\label{eq-ch_cos_init}
    \phi_0(x, y) = 0.05 \big( \cos{(6 \pi x)} \cos{(8 \pi y)} + (\cos{(8 \pi x)} \cos{(6 \pi y)})^2 + \cos{(2 \pi x - 10 \pi y)}\cos{(4 \pi x - 2\pi y)} \big).
\end{equation}

We specify the spatial domain $\Omega = (0, 2\pi)^2$, and set the parameters in \eqref{cahn_hilliard} as $\lambda = 1$, $\varepsilon = 0.01$. The spatial discretization is carried out using $128 \times 128$ Fourier modes. Several methods are employed to solve the governing system until the final time $T = 0.1$. It should be noted that, due to the chosen initial condition \eqref{eq-ch_cos_init}, the solution of \eqref{cahn_hilliard} undergoes rapid changes at the beginning.  Therefore, if the method is not stable, it will fail to depict the solution using a large time step size accurately.
\begin{figure}[H]
    \centering
    \includegraphics[width=0.2\linewidth]{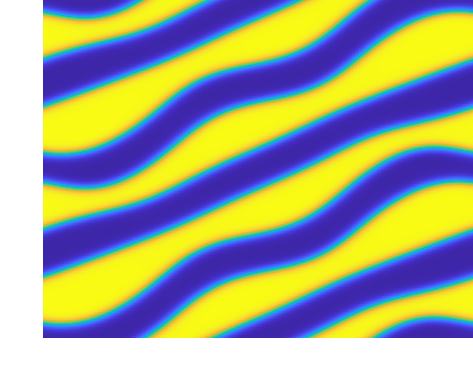} 
    \caption{Reference solution of the test problem solved by SAV-MDIARK(5,6,4) using the step size $\tau = 1\times 10^{-5}$ } \label{fig-reference_ch_cos}
\end{figure}
As a benchmark, Figure \ref{fig-reference_ch_cos} illustrates the snapshot obtained by the SAV-MDIARK(5,6,4) method with a step size of $\tau = 1 \times 10^{-5}$. During the test, the time step is progressively reduced until the correct solution snapshot is obtained, and the maximum step size that yields the correct solution profile for each method is recorded.
\begin{figure}[H]
    \centering
    \subfigure[\label{3a} $\tau=7.2\times 10^{-5}$ (left) and $\tau=6.25 \times 10^{-5}$ (right).]{
        \includegraphics[width=0.3\linewidth]{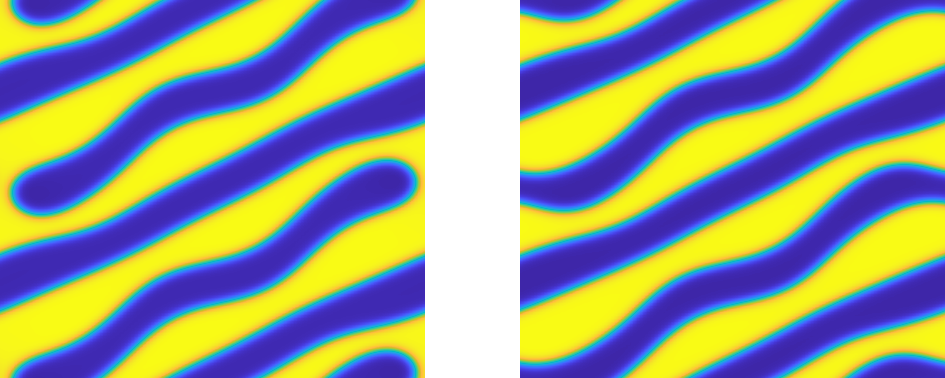}
    } \hspace{0.5cm}
    \subfigure[\label{3b} $\tau=1.25\times 10^{-4}$ (left) and $\tau=1 \times 10^{-4}$ (right).]{
        \includegraphics[width=0.3\linewidth]{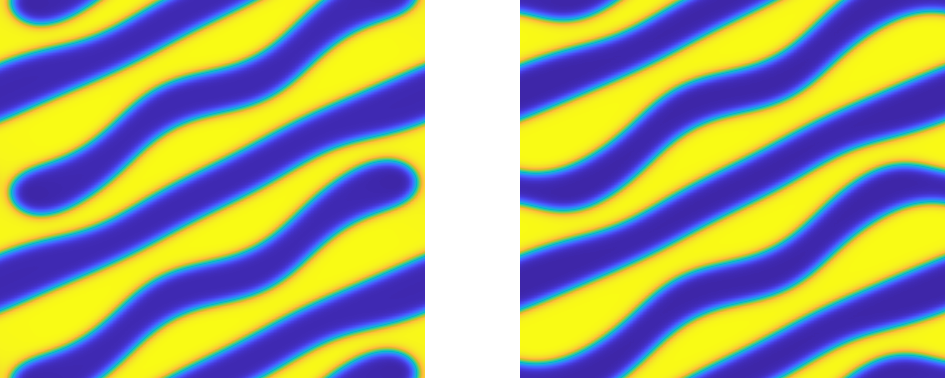}
    }
	\newline
    \subfigure[\label{3c} $\tau=4 \times 10^{-4}$ (left) and $\tau=3.75 \times 10^{-4}$ (right).]{
        \includegraphics[width=0.3\linewidth]{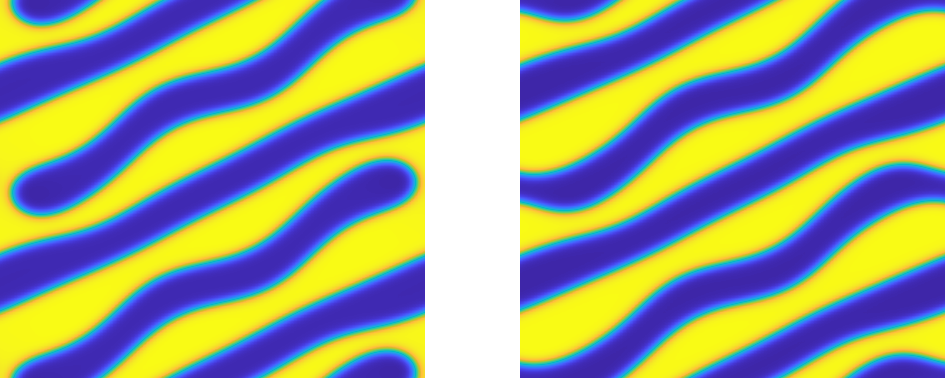}
    }
    \caption{Snapshots of $\phi$ solved by SAV-CN \subref{3a}, CS2 \subref{3b}, SAV-GRK4PC(5) \subref{3c} methods with different time step sizes. The stabilized parameters of SAV-CN and SAV-GRK4PC(5) are fixed $\kappa = 1$.} \label{fig-reference_ch_cos_other}
\end{figure}
To facilitate comparisons, we display numerical results for several existing methods in Figure \ref{fig-reference_ch_cos_other}, including the SAV-CN method, the fully implicit second-order convex splitting scheme (CS2), and the SAV-GRK4PC(5). It can be seen that the SAV-CN method fails to produce a correct result at a large time step, while the convex splitting scheme is capable of producing an accurate result with a relatively large time step. Due to the high precision and stability achieved through multiple iterations, the SAV-GRK4PC(5) can also compute a correct solution with a larger time step.
\begin{figure}[H]
    \centering
    \subfigure[\label{4a} $\tau=4.125 \times 10^{-5}$ (left) and $\tau=4 \times 10^{-4}$ (right).]{
        \includegraphics[width=0.3\linewidth]{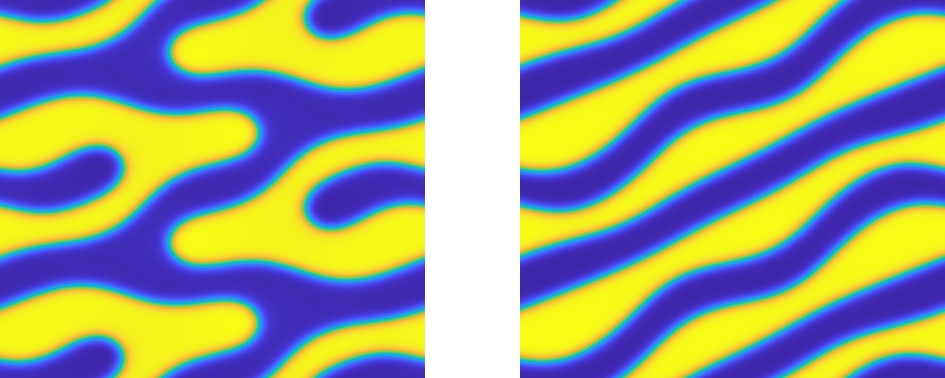}
    } \hspace{0.5cm}
    \subfigure[\label{4b} $\tau=4.2 \times 10^{-4}$ (left) and $\tau=4.125\times 10^{-4}$ (right).]{
        \includegraphics[width=0.3\linewidth]{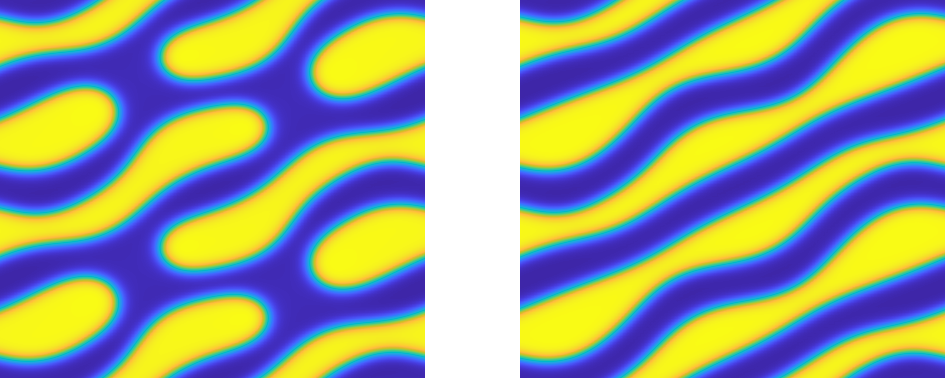}
    } 
\newline
    \subfigure[\label{4c} $\tau=5.25\times 10^{-4}$ (left) and $\tau = 5.2 \times 10^{-4}$ (right).]{
        \includegraphics[width=0.3\linewidth]{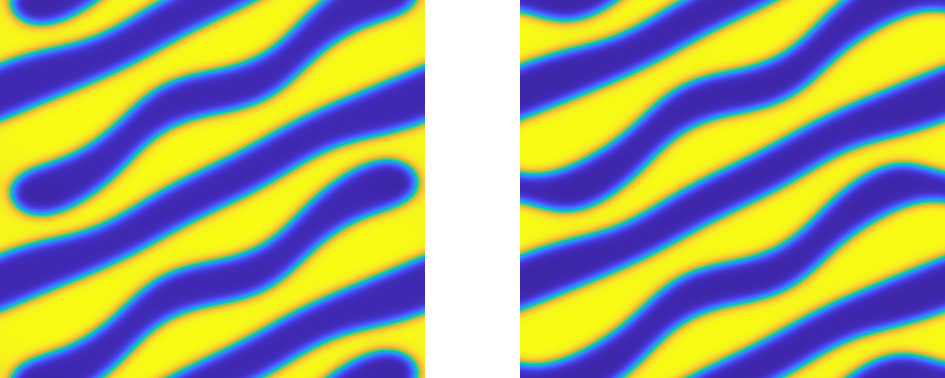}
    } \hspace{0.5cm}
    \subfigure[\label{4d} $\tau=3.75 \times 10^{-4}$ (left) and $\tau = 3.2 \times 10^{-4}$ (right).]{
        \includegraphics[width=0.3\linewidth]{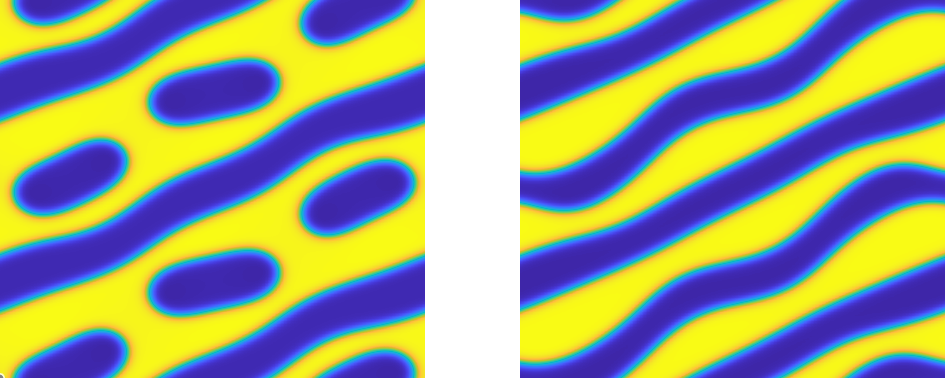}
    } \newline
    \subfigure[\label{4e} $\tau=2.875\times 10^{-4}$ (left) and $\tau = 2.5 \times 10^{-4}$ (right).]{
        \includegraphics[width=0.3\linewidth]{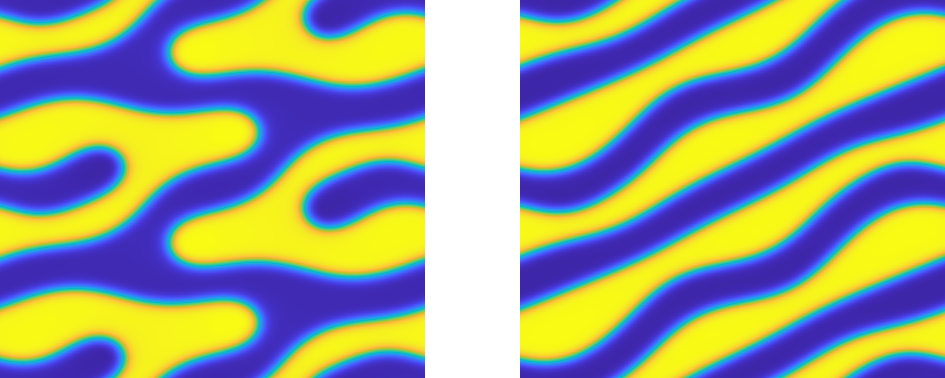}
    }
    \caption{Snapshots of $\phi$ solved by a various of different proposed methods SAV-MDIARK(2,2,2) \subref{4a}, SAV-MDIARK(2,3,3) \subref{4b}, SAV-MDIARK(4,5,3) \subref{4c}, SAV-MAGRK \subref{4d}, SAV-MDIARK(5,6,4) \subref{4e} with different time step sizes. The stabilized parameters of the proposed schemes are selected $\kappa = 0$.} \label{fig-reference_ch_cos_our}
\end{figure}
The numerical results obtained by the proposed schemes are presented in Figure \ref{fig-reference_ch_cos_our}. It is evident that our second- and third-order schemes achieve accurate results at larger step sizes compared to SAV-CN and CS2 methods and even outperform the SAV-GRK4PC(5) method. Among these methods, the SAV-MDIARK(5,4,3) method performs the best by yielding the correct solution at a step size of $\tau = 5.2 \times 10^{-4}$. Although the proposed fourth-order methods require smaller step sizes to obtain accurate results, their step sizes remain competitive with those used in other publications, despite considering only the order during their construction.

\begin{figure}[H]
    \centering
    \includegraphics[width=0.35\linewidth]{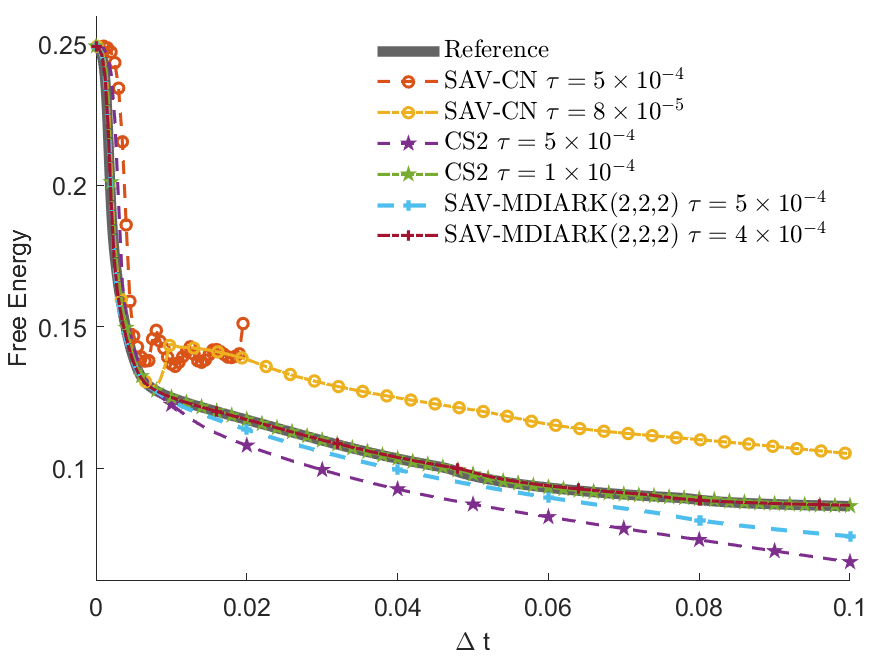} \quad
    \includegraphics[width=0.35\linewidth]{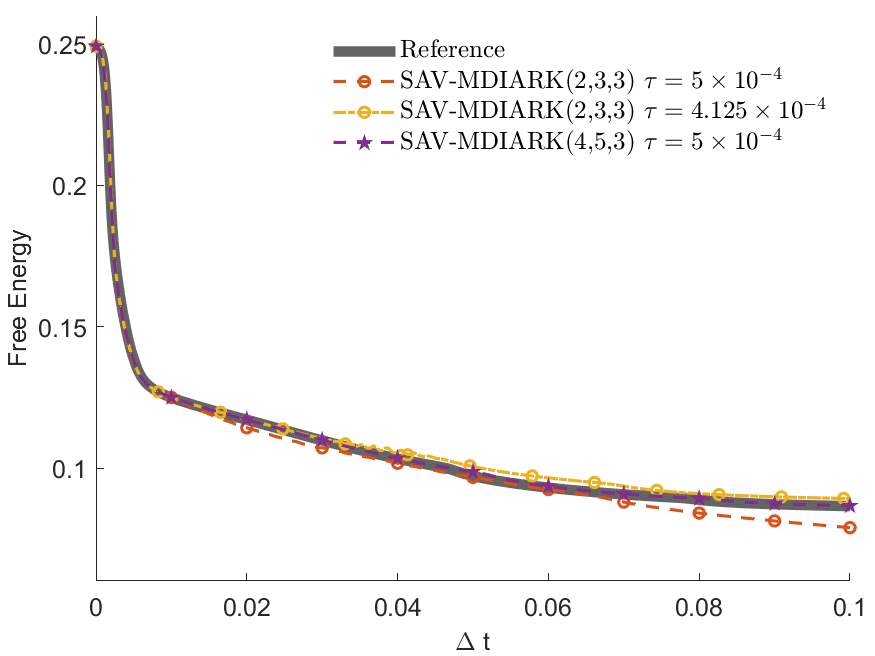} \quad
    \includegraphics[width=0.35\linewidth]{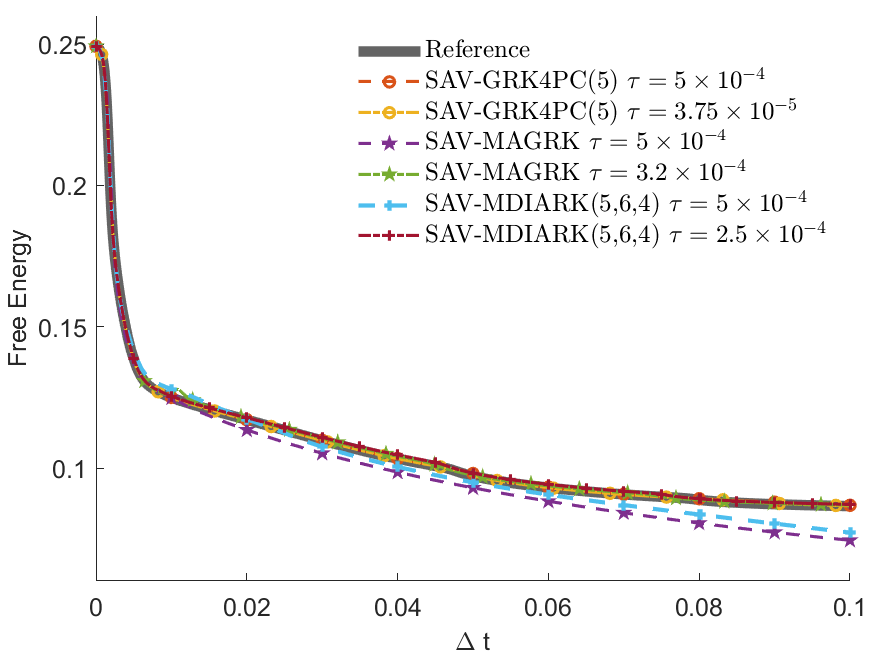}
    \caption{Time history of the free energy solved by a various of different proposed methods.} \label{ch-cos_fenergy}
\end{figure}
In addition to verifying the effectiveness of the proposed schemes through profiles and step sizes, we also present the evolution of the following discrete free energy
\begin{equation*}
    \mathcal{F}^n_N = \frac{\varepsilon^2}{2} \|\nabla_N \bm{u}^n\|_N^2 + \frac{1}{4} \big(((\bm{u}^n)^2 - 1)^2, 1\big)_N.
\end{equation*}
It is worth noting that although the above methods have only been proven to dissipate quadratic energy, we still investigate the original discrete energy in our experiments. Figure \ref{ch-cos_fenergy} summarizes the evolution of the free energy for different numerical schemes under different time steps. It can be observed that the SAV-CN method fails to dissipate the original energy at larger step sizes due to lower precision and weaker stability. While all our methods monotonically decrease the discrete free energy. This indicates that the proposed methods are robust and unconditionally energy-stable, as predicted by the theoretical results.

\subsection{MBE equation}
To further display the accuracy and robustness of the proposed schemes, let us consider the following MBE model  
\begin{equation}\label{mbe}
    u_t = - \lambda (\delta \Delta^2 u - \nabla\cdot f(\nabla u)),
\end{equation}
which is the $L^2$ gradient flow with respect to the following free energy functional
\begin{equation}\label{mbe fenergy}
    \mathcal{F}[u] = \int_\Omega \frac{\delta}{2} |\Delta u|^2 + F(\nabla u) d\mathbf{x}.
\end{equation}
In \eqref{mbe} and \eqref{mbe fenergy}, $u$ represents the height function of a thin film in a co-moving frame, $\delta$ is a positive constant, and $f = F^\prime$. If we set $F(\nabla u) = -\frac{1}{2} \ln(1 + |\nabla u|^2)$, \eqref{mbe} is usually called the MBE equation without slope selection. Corresponding, \eqref{mbe} is named MBE equation with slope selection while taking $F(\nabla u) = \frac{1}{4}(|\nabla u|^2 - 1)^2$. Introducing an SAV $q = \sqrt{\frac{1}{4} \int_\Omega (|\nabla u|^2 - 1 - \kappa)^2 d\mathbf{x} + C|\Omega|  }$. The free energy is then modified into
\begin{equation}
    \mathcal{F}[u, q] = \frac{\delta}{2}\|\Delta u\|^2 + \frac{\kappa}{2}\|\nabla u\|^2 + q^2 - \frac{\kappa^2 + 2\kappa + 4C}{4}|\Omega|.
\end{equation}
Correspondingly, \eqref{mbe} is reformulated into
\begin{equation}
    \left\lbrace
    \begin{aligned}
        u_t &= -\lambda \left( \delta \Delta^2 u - \kappa \Delta u - \nabla \cdot f_\kappa(\nabla u) q \right), \\ 
        q_t &= \frac{1}{2} (f_\kappa(\nabla u), \nabla u_t),
    \end{aligned}
    \right.
\end{equation}
where $f_\kappa(\nabla u) = \frac{(|\nabla u|^2 - 1 - \kappa)\nabla u}{\sqrt{ \frac{1}{4} \int_\Omega ( |\nabla u|^2 - 1 - \kappa )^2 d\mathbf{x} + C|\Omega| }}$.
\begin{rmk}
    We remark here although the nonlinearity of the MBE equation without slope selection seems to be unbounded, the SAV can remain to be introduced as $q = \sqrt{\frac{\kappa}{2}|\nabla u|^2 -\frac{1}{2} \ln(1 + |\nabla u|^2) +  C|\Omega|  }$. Due to the Lipschitz continuous of $F$, there is no difficulty in confirming that
\begin{equation*}
    \frac{\kappa}{2}|\nabla u|^2 -\frac{1}{2} \ln(1 + |\nabla u|^2) > 0,
\end{equation*}
as soon as $\kappa \geq \frac{1}{8}$.
\end{rmk}
We will still begin with performing the refinement test in time. Specifying the computational domain $\Omega = (0, 2\pi)^2$ and considering a classical example with the initial condition 
\begin{equation*}
    \phi_0(x, y) = 0.1(\sin{3x} \sin{5y} + \sin{5x} \sin{5y}),
\end{equation*}
which was studied in \cite{mbe_model1, mbe_model2} to observe morphological instability due to the nonlinear interaction. The parameters are $\lambda = 1$ and $\delta = 0.1$. Since the exact solution of \eqref{mbe} is not available, the SAV-MDIARK(5,6,4) method is employed to compute a reference solution of \eqref{mbe} using $256 \times 256$ Fourier modes and a step size of $\tau = 5 \times 10^{-6}$. Then the refinement test in time is carried out by varying the temporal step size $\tau = 2^{3-k} \times 10^{-4}$ ($k = 0,1,\cdots,6$). The spatial is discretized using $128 \times 128$ Fourier modes. The discrete $L^2$ and $L^\infty$ errors between the reference and numerical solutions at $T = 0.1$ are recorded.
\begin{figure}[H]
    \centering
    \includegraphics[width=0.35\linewidth]{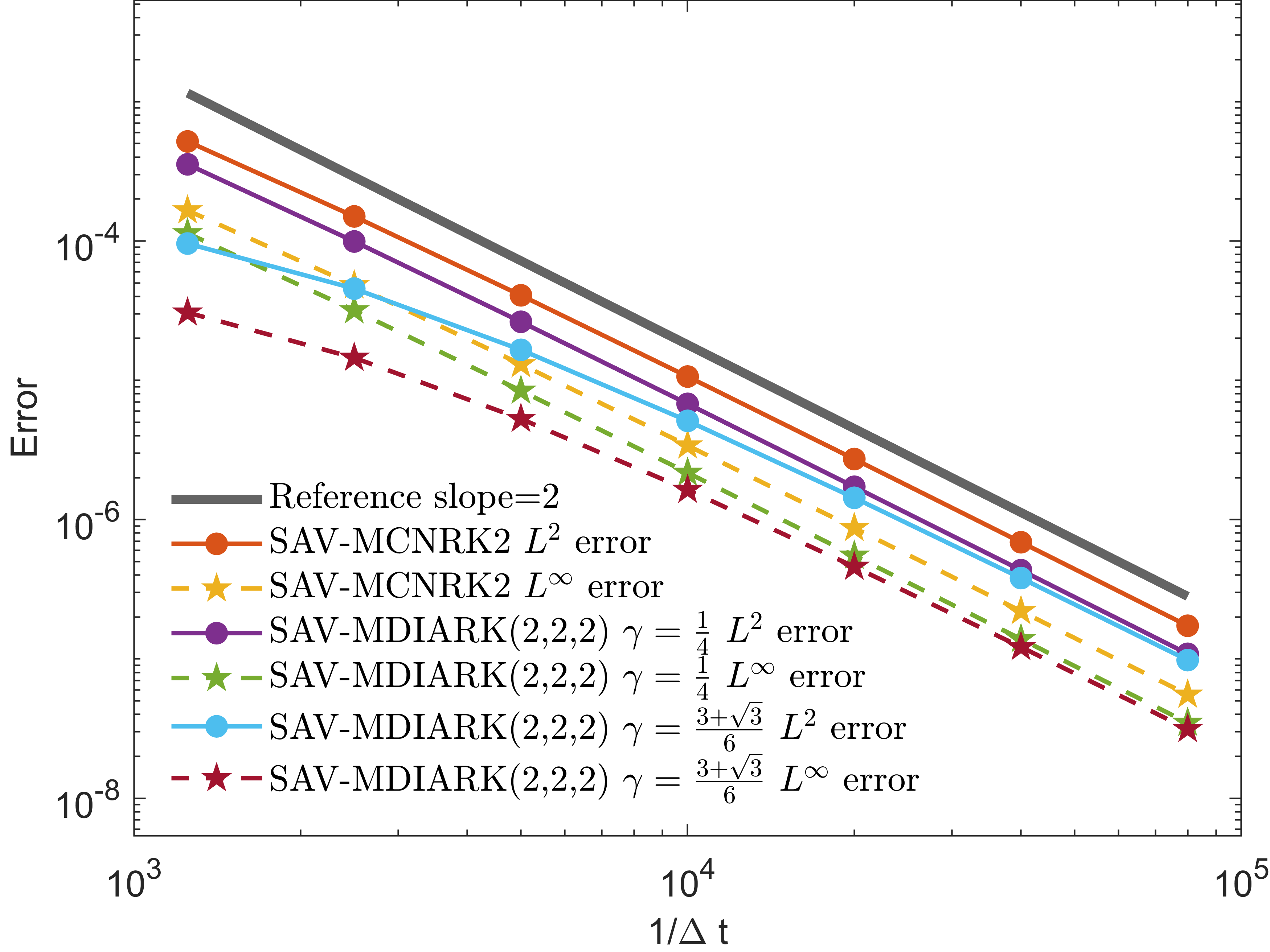} \quad
    \includegraphics[width=0.35\linewidth]{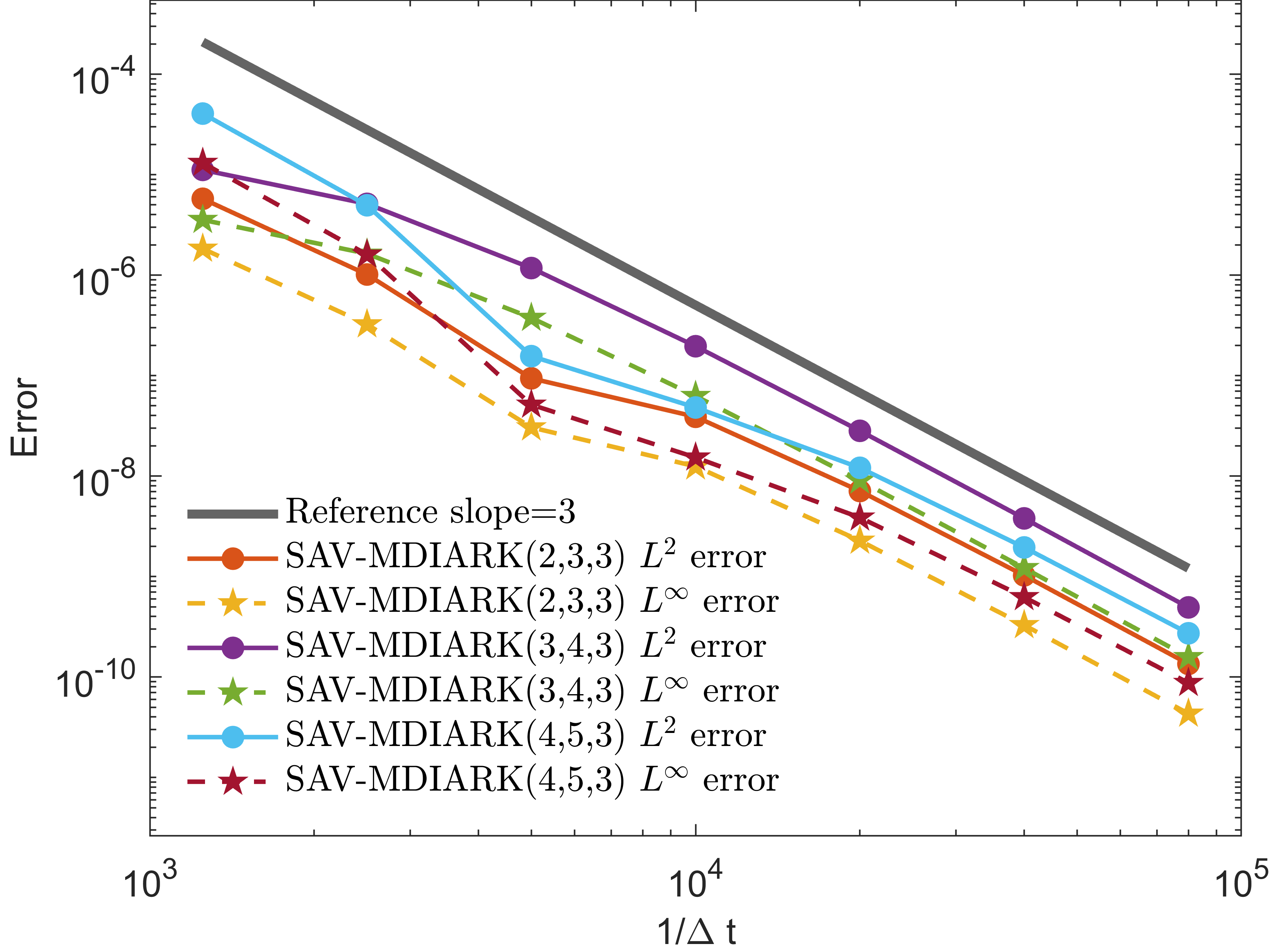} \quad
    \includegraphics[width=0.35\linewidth]{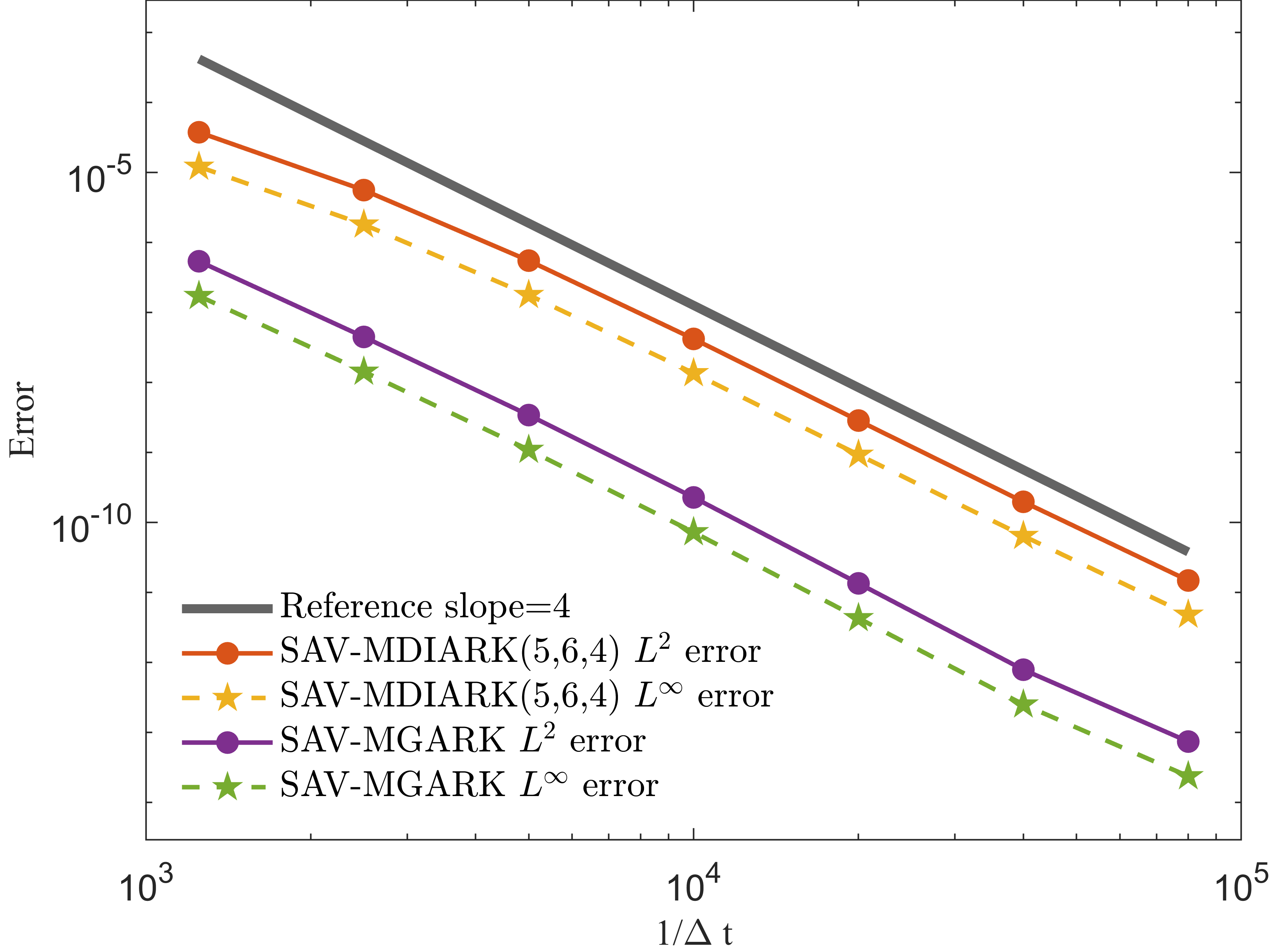}
    \caption{Temporal convergence tests of a various of different proposed methods.}\label{fig:mbe_conv}
\end{figure}
Figure \ref{fig:mbe_conv} displays the solution error at $T=0.1$ as a function of the step size in the logarithmic scale. It is observable that all methods arrive at their corresponding convergence rates. Moreover, the super-convergence of SAV-MDIARK(4,5,2) disappears under this circumstance, suggesting that the results appearing in Figure \ref{fig-ch_accuracy1} are coincidental.
\begin{figure}[H]
    \centering
    \subfigure[$t=0$]{
        \includegraphics[width=0.15\linewidth]{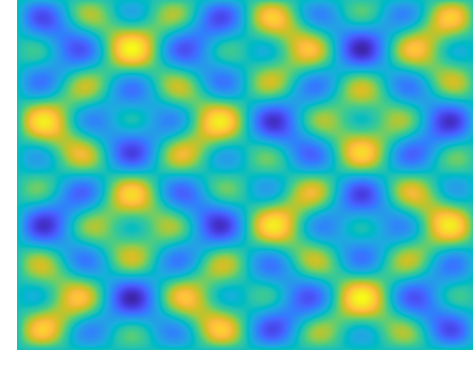}
    }
    \subfigure[$t=0.05$]{
        \includegraphics[width=0.15\linewidth]{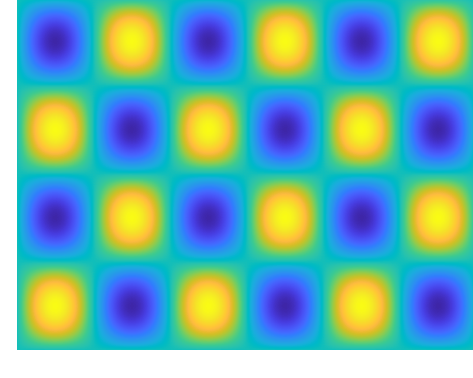}
    }
    \subfigure[$t=2.5$]{
        \includegraphics[width=0.15\linewidth]{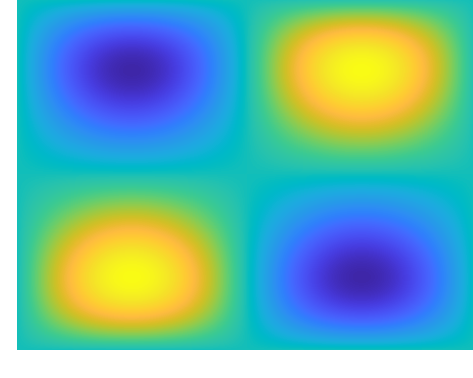}
    }

    \subfigure[$t=5.5$]{
        \includegraphics[width=0.15\linewidth]{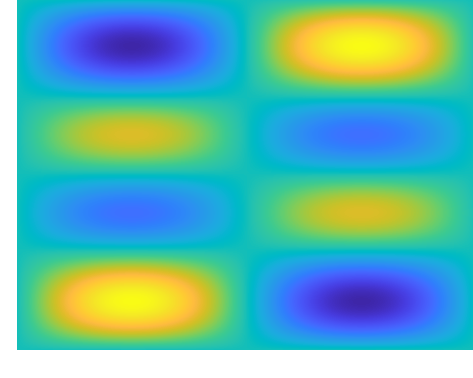}
    }
    \subfigure[$t=8$]{
        \includegraphics[width=0.15\linewidth]{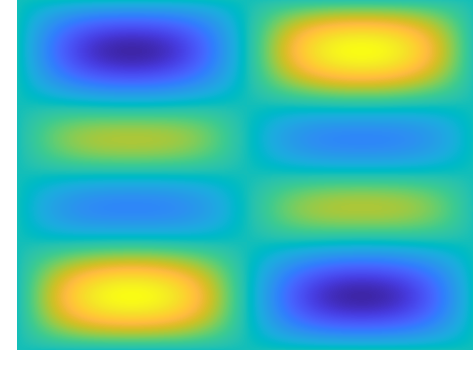}
    }
    \subfigure[$t=30$]{
        \includegraphics[width=0.15\linewidth]{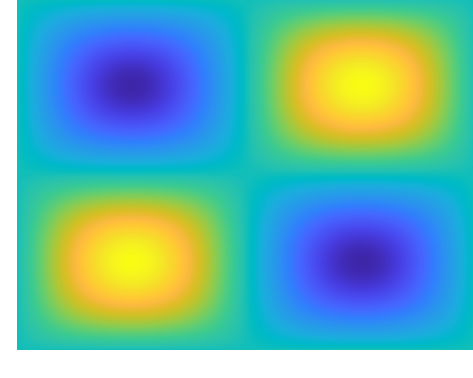}
    }
    \caption{Snapshots of $\phi$ at different times solved by the SAV-MDIARK(5,6,4) with the time step size $\tau = 5\times 10^{-3}$} \label{fig:mbe_height1}
\end{figure}
Then, we simulate \eqref{mbe} under the same initial condition until $T=30$. Figure \ref{fig:mbe_height1} displays the height profiles solved by the SAV-MDIARK(5,6,4) under with $\tau = 5 \times 10^{-3}$ at different times $t=0,0.05,2.5,5.5,8,30$. The results agree with those reported in \cite{tang_splitting,lu_lsp}. We remark here that the simulation results of other schemes are indistinguishable and thus are omitted due to the limitation of space.

\begin{figure}[H]
    \centering
    \subfigure[BDF2]{
    \includegraphics[width=0.28\linewidth]{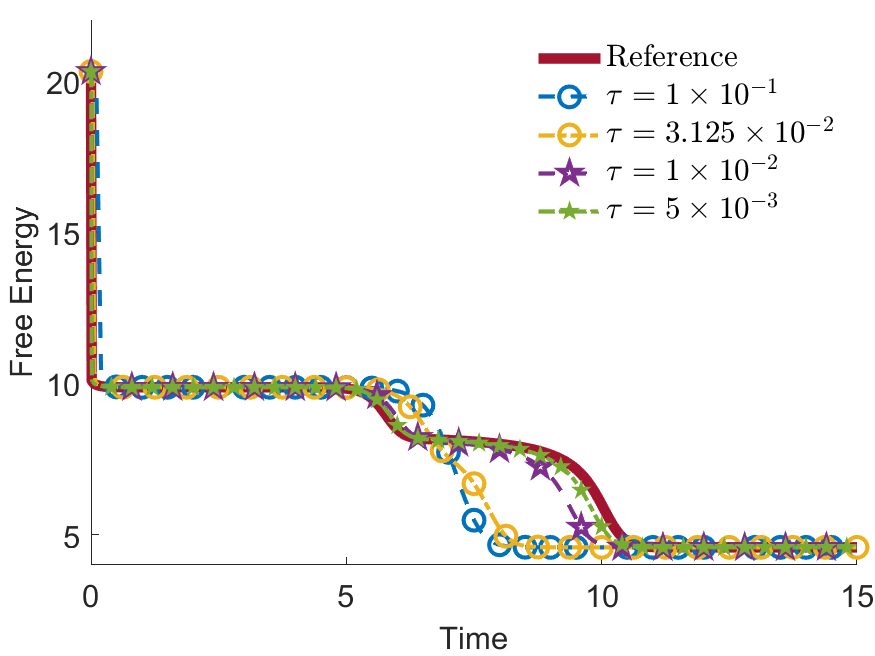} \quad
}
\subfigure[SAV-MDIARK(2,2,2) $\gamma = \frac{1}{4}$]{
    \includegraphics[width=0.28\linewidth]{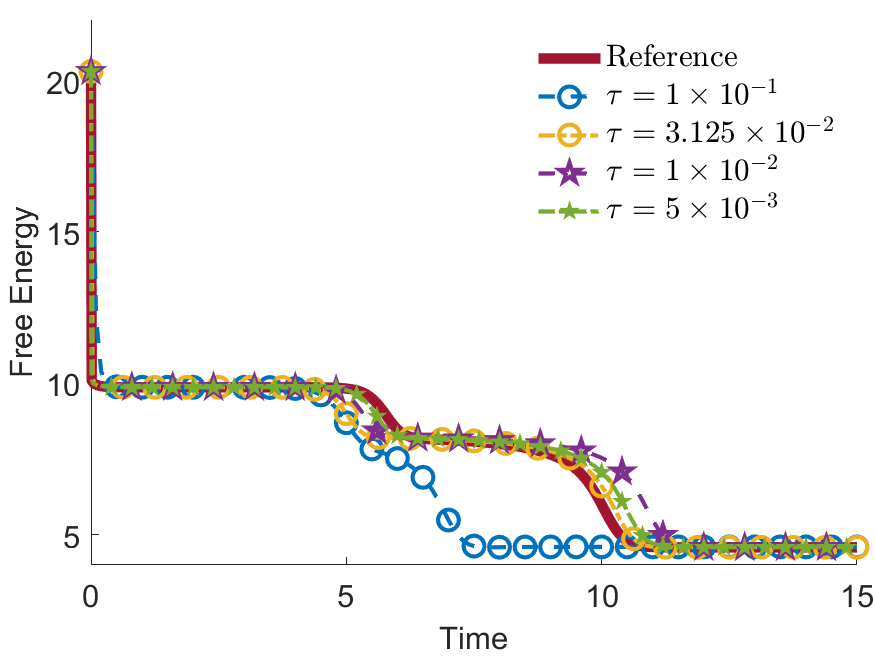} \quad
}
\subfigure[SAV-MDIARK(2,2,2) $\gamma = \frac{3+\sqrt{3}}{6}$]{
    \includegraphics[width=0.28\linewidth]{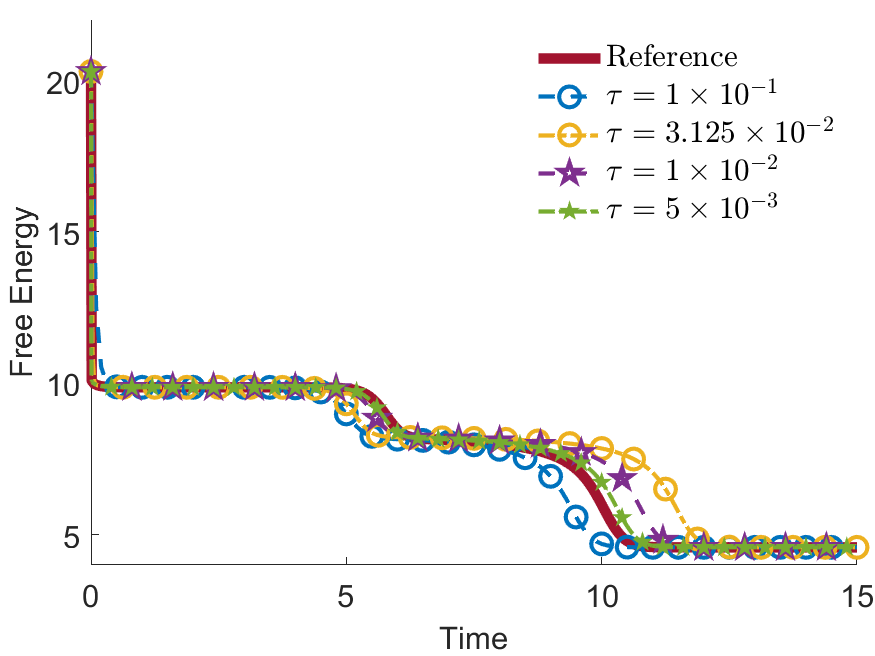}
}
\subfigure[BDF3]{
    \includegraphics[width=0.28\linewidth]{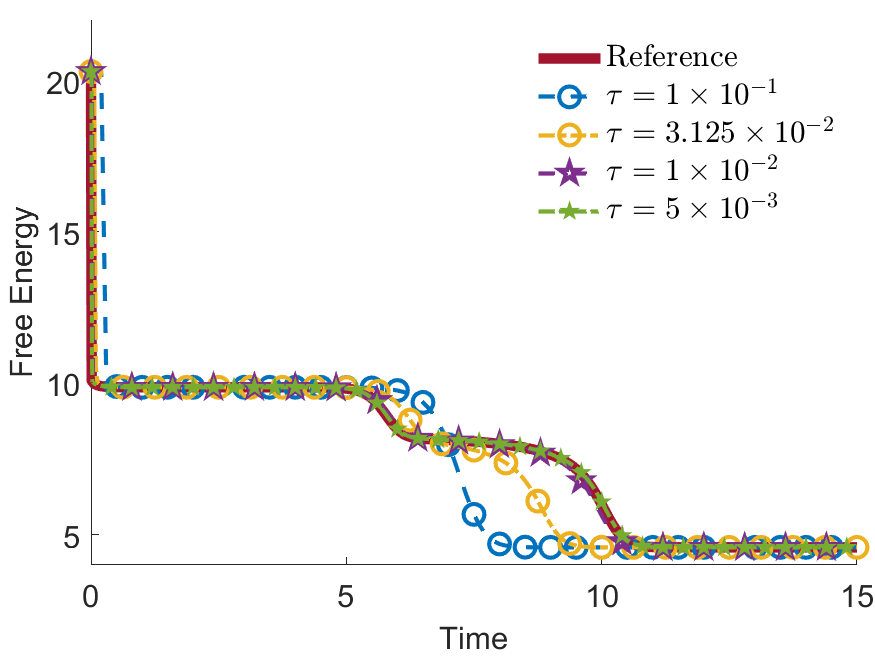} \quad
}
\subfigure[SAV-MDIARK(2,3,3)]{
    \includegraphics[width=0.28\linewidth]{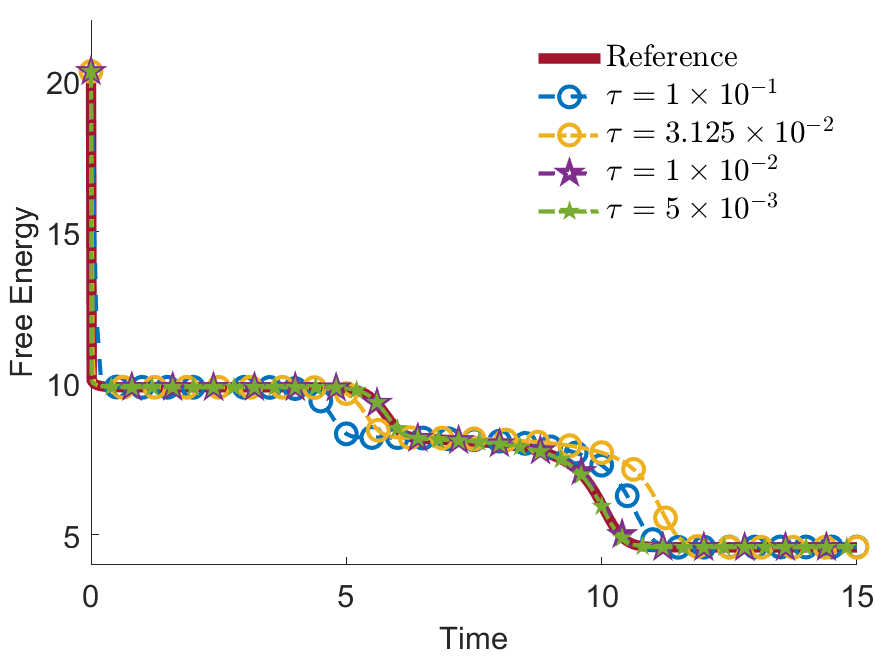} \quad
}
\subfigure[SAV-MDIARK(4,5,3)]{
    \includegraphics[width=0.28\linewidth]{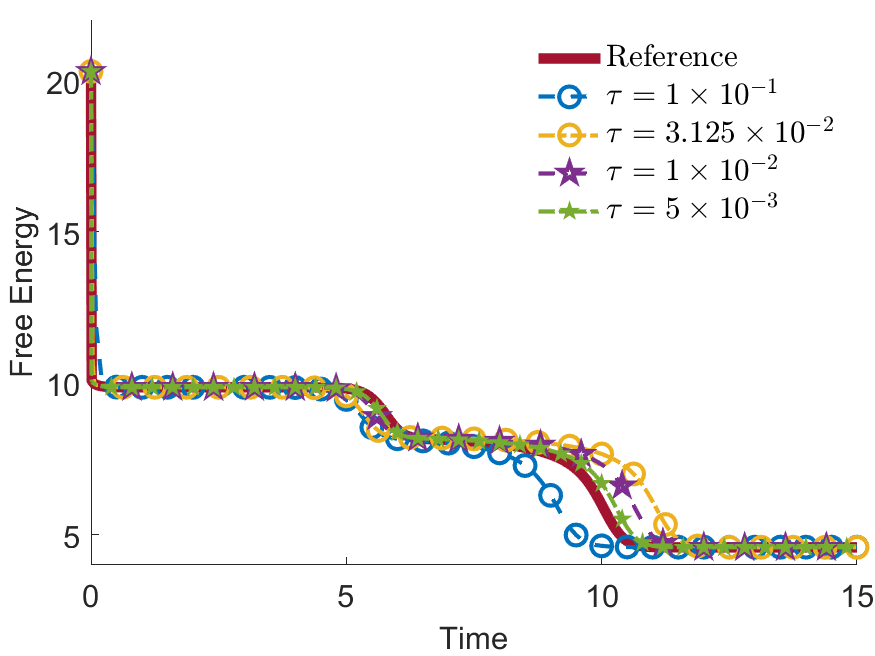}
}
\subfigure[BDF4]{
    \includegraphics[width=0.28\linewidth]{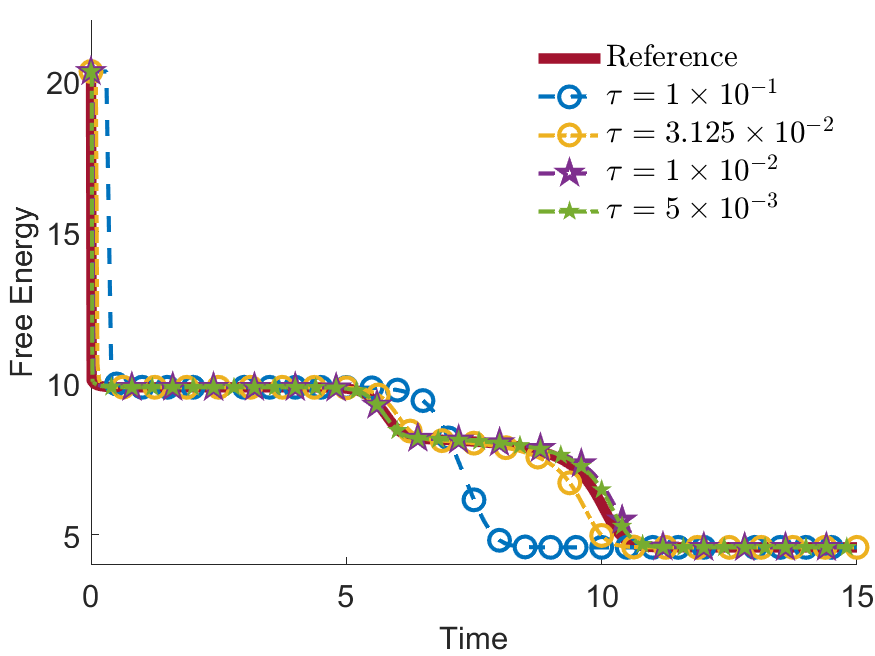} \quad
}
\subfigure[SAV-MAGRK]{
    \includegraphics[width=0.28\linewidth]{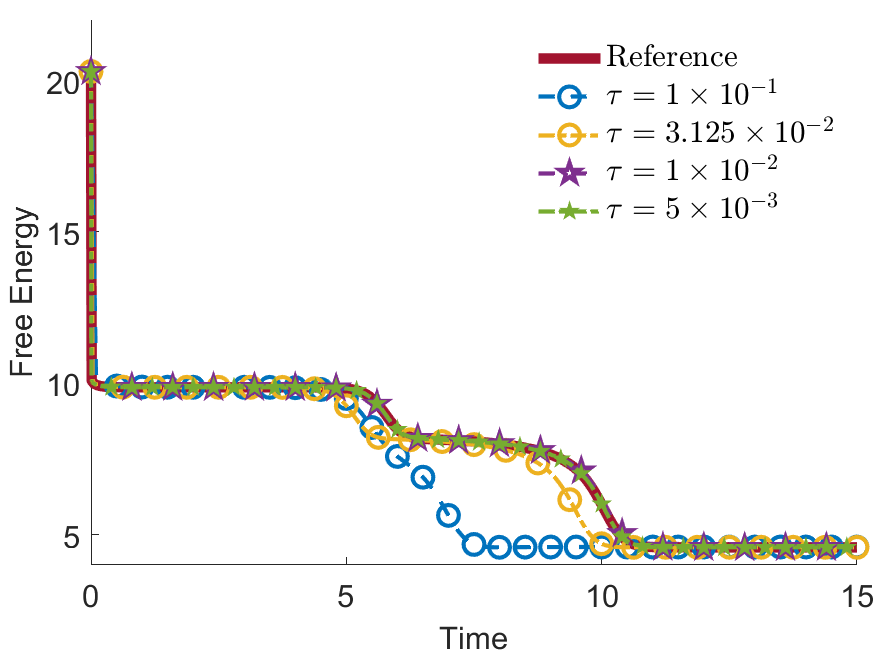} \quad
}
\subfigure[SAV-MDIARK(5,6,4)]{
    \includegraphics[width=0.28\linewidth]{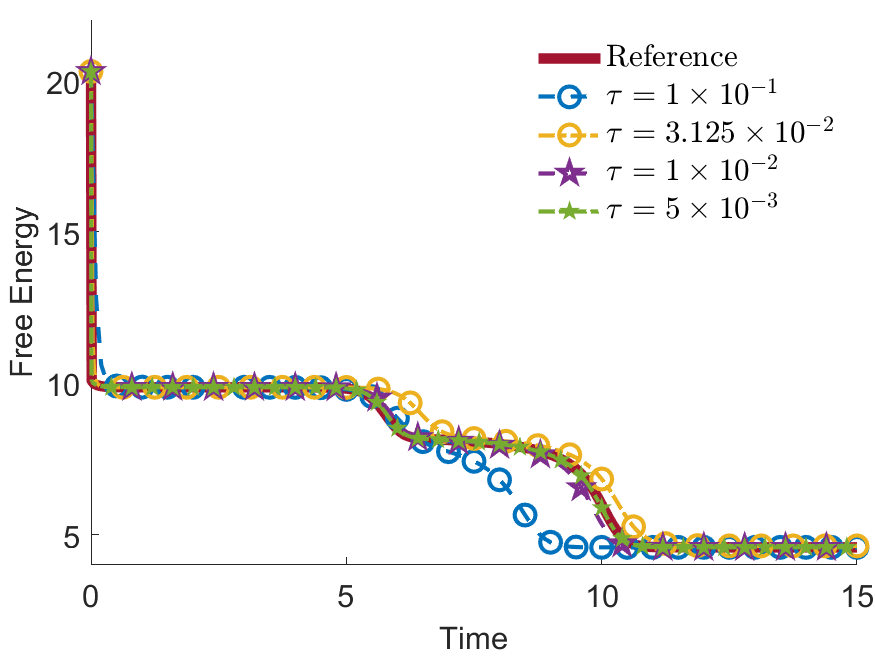}
}
\caption{Time history of the free energy solved by a various of different methods.} \label{fig:mbe_fenergy}
\end{figure}
Figure \ref{fig:mbe_fenergy} summarizes the evolution of free energy from $t = 0$ to $t = 15$ solved by different methods with different time steps. Notice that the energy curve for the fully-implicit backward difference (BDF) methods, which are recognized to have good stability, are also plotted for comparisons. For the third- and fourth-order schemes, the energy curves predicted by the proposed methods are comparable with those predicted by BDF methods. Moreover, among the second-order schemes, the proposed methods provide more accurate energy prediction than the BDF2 method when $\tau = 3.125\times 10^{-2}$. These suggest that our methods are comparable to the fully discrete BDF methods in terms of stability. However, it should be noted that our methods are linearly implicit and only require the solution of a linear system at each step. Table \ref{tab:cpu} lists the CPU times for these methods when conducting the above experiments with the time step of $\tau = 1\times 10^{-2}$. Despite the ARK methods needing to solve more intermediate stages, particularly for higher-order schemes, our proposed methods are more efficient than BDF methods.
\begin{table}[H]
\begin{center}
    \caption{CPU time} \label{tab:cpu}
    \begin{tabular}{c c c c}
        \toprule
        Method & CPU Time(s) & Method & CPU Time(s) \\
        \midrule 
        BDF2 & 112.22 & SAV-MDIARK(2,2,2) & 33.18 \\
        BDF3 & 114.13 & SAV-MDIARK(2,3,3) & 34.89 \\
        BDF4 & 112.28 & SAV-MDIARK(4,5,3) & 75.18 \\ 
        SAV-MAGRK & 63.39 & SAV-MDIARK(5,6,4) & 69.70\\
        \bottomrule
    \end{tabular}
\end{center}
\end{table}

\section{Conclusion} \label{sec6}
Combing the SAV approach and ARK methods, we develop a novel paradigm for constructing linearly implicit and high-order unconditionally energy-stable methods for general gradient flows. The proposed schemes are rigorously proved to be unconditionally energy-stable, uniquely solvable, and convergent. We also reveal that each SAV-RKPC method can be regarded as a SAV-ARK method, and the order of the SAV-RKPC methods are then confirmed theoretically using the order-conditions of ARK methods. Numerical examples demonstrate the efficiency and robustness of the proposed methods.
\section{Acknowledgments}
This work is supported by the National Key Research and Development
Project of China (2018YFC1504205), the National Natural Science Foundation of China (12171245, 11971242).

\begin{appendices}
	\section{Examples of some SAV-ARK methods}
    In this section, we list SAV-ARK methods utilized in the above contexts. We will refer a SAV-DIARK (or SAV-MDIARK) method with $s$-stage implicit method, $r$-stage explicit method and $p$-th order as SAV-DIARK$(s,r,p)$ (SAV-MDIARK$(s,r,p)$).
	\subsection{SAV-DIARK(2,2,2)}
	\begin{equation*}
	\begin{aligned}
		&A = 
		\begin{pmatrix}
			\gamma & 0 \\ 
			1-2\gamma & \gamma
		\end{pmatrix}, \\
		&b = 
		\begin{pmatrix}
			\frac{1}{2} & \frac{1}{2}
		\end{pmatrix}^{\mathrm{T}}, \\ 
		&\widehat{A} = 
		\begin{pmatrix}
			0 & 0 \\ 
			1 & 0
		\end{pmatrix}, \\ 
		&\widehat{b} = b.
    \end{aligned}
	\end{equation*}
	The discriminant of the implicit part of the above method reads
	\begin{equation*}
		\varDelta = (\lambda - \frac{1}{4})
		\begin{pmatrix}
			1 & -1 \\ 
			-1 & 1
		\end{pmatrix}.
	\end{equation*}
	Therefore, the implicit part of the method is algebraic stable iff $\lambda \geq \frac{1}{4}$.

	\subsection{SAV-DIARK(2,3,3)}
	\begin{equation*}
	\begin{aligned}
		& A = 
		\begin{pmatrix}
			 0 & 0 & 0 \\ 
			 0 & \frac{3 + \sqrt{3}}{6} & 0  \\
			 0 &  -\frac{\sqrt{3}}{3} & \frac{3 + \sqrt{3}}{6}
		\end{pmatrix}, \\
		& b = 
		\begin{pmatrix}
			 0 & \frac{1}{2} & \frac{1}{2}
		\end{pmatrix}^\mathrm{T}, \\ 
	 	& \widehat{A} = 
	 	\begin{pmatrix}
	 		 0 & 0 & 0 \\
	 		 \frac{3 + \sqrt{3}}{6} & 0 & 0  \\
	 		 \frac{-3 + \sqrt{3}}{6} &  \frac{3 - \sqrt{3}}{3} & 0
	 	\end{pmatrix}, \\
 		& \widehat{b} = b.
	\end{aligned}
	\end{equation*}
	The eigenvalues of diagonally implicit part are [1.0774, 0, 0, 0].
	\subsection{SAV-DIARK(3,4,3)}
	\begin{equation*}
	\begin{aligned}
		&A = 
		\begin{pmatrix}
			0 & 0 & 0 & 0 \\
			0 & \sigma & 0 & 0 \\
			0 & \frac{1}{2} - \sigma & \sigma & 0  \\
			0 & 2\sigma & 1 - 4\sigma & \sigma \\
		\end{pmatrix}, \\ 
	   &b = 
	   \begin{pmatrix}
	   		0 & \mu & 1-2\mu & \mu
	   \end{pmatrix}^\mathrm{T}, \\ 
   		&\widehat{A} = 
   		\begin{pmatrix}
   			0 & 0 & 0 & 0 \\
   			\sigma & 0 & 0 & 0 \\
   			0 & \frac{1}{2} & 0 & 0  \\
   			0 & \frac{9 \mu \sigma-3 \mu-3 \sigma+1}{3 \mu (2 \sigma-1)} & 1-\sigma- \frac{9 \mu \sigma-3 \mu-3 \sigma+1}{3 \mu (2 \sigma-1)} & 0 \\
   		\end{pmatrix}, \\ 
   		&\widehat{b} = b,
	\end{aligned}
	\end{equation*}
	where 
	\begin{equation*}
		\sigma = \frac{\sqrt{3}}{3} \cos{(\frac{\pi}{18})} + \frac{1}{2}, \quad \mu = \frac{1}{6 (2\sigma - 1)^2}.
	\end{equation*}
	Then, the eigenvalues of the diagonally implicit part are $[1.5530, 0, 0, 0]$.
	\subsection{SAV-DIARK(5,6,4)}
	\begin{equation*}
	\begin{aligned}
		&A = \begin{pmatrix}
			0 & 0 & 0 & 0 & 0 & 0 \\
			0 & \frac{3}{8} & 0 & 0 & 0 & 0 \\
			\frac{3}{8} & 0 & \frac{3}{16} & 0 & 0 & 0 \\
			0 & 0 & 0 & \sigma & 0 & 0 \\
			0 & 0 & 0 & \frac{1}{2} - \sigma & \sigma & 0 \\
			0 & 0 & 0 & 2\sigma & 1 - 4\sigma & \sigma \\
		\end{pmatrix}, \\ 
		&b = 
		\begin{pmatrix}
			0 & 0 & 0 & \mu & 1-2\mu & \mu
		\end{pmatrix}^{\mathrm{T}}, \\ 	
		&\widehat{A} = 
		\begin{pmatrix}
			0 & 0 & 0 & 0 & 0 & 0  \\
			\frac{3}{8} & 0 & 0 & 0 & 0 & 0 \\
			0 & \frac{9}{16} & 0 & 0 & 0 & 0 \\ 
			\frac{25}{162 \mu} & \frac{-104 \sigma \mu^2 +6 \mu^{2}+20 \mu}{108 \mu^{2}-90 \mu+9} & \frac{112 \sigma \mu^2 +36 \mu^{2}-37 \mu}{324 \mu^{2}-270 \mu+27} & 0 & 0 & 0 \\
			 0 & 0 & \frac{1}{2} & 0 & 0 & 0 \\
			 0 & \frac{56 \sigma \mu^2 -2 \mu^{2}-12 \mu}{36 \mu^{2}-30 \mu+3} & \frac{16 \sigma \mu^2 -4 \mu^{2}+3 \mu}{36 \mu^{2}-30 \mu+3} & 0 & 0 & 0 \\
		\end{pmatrix}, \\ 
		&\widehat{b} = b.
	\end{aligned}
	\end{equation*}
    The eigenvalues of the diagonally implicit part are $[1.5530, 0, 0, 0, 0, 0]$.
	\subsection{SAV-GARK(4,5,4)}
	\begin{equation*}
	\begin{aligned}
		&A = 
		\begin{pmatrix}
			0 & 0 & 0 & 0 & 0 \\
			0 & \frac{1}{4} & 0 & 0 & 0 \\
			\frac{1}{4} & 0 & \frac{1}{4} & 0 & 0 \\
			0 & 0 & 0 & \frac{1}{4} & \frac{1}{4}-\frac{\sqrt{3}}{6} \\
			0 & 0 & 0 & \frac{1}{4}+\frac{\sqrt{3}}{6} & \frac{1}{4}
		\end{pmatrix}, \\ 
		& b = 
		\begin{pmatrix}
			0 & 0 & 0 & \frac{1}{2} & \frac{1}{2}
		\end{pmatrix}^{\mathrm{T}}, \\ 
		& \widehat{A} = 
		\begin{pmatrix}
				 0 & 0 & 0 & 0 & 0 \\
			\frac{1}{4} & 0 & 0 & 0 & 0 \\
			0 & \frac{1}{2} & 0 & 0 & 0 \\
			\frac{1}{6} & 0 & \frac{1}{3}-\frac{\sqrt{3}}{6} & 0 & 0 \\
			\frac{1}{6} & 0 & \frac{1}{3}+\frac{\sqrt{3}}{6} & 0 & 0 \\
		\end{pmatrix}, \\ 
		& \widehat{b} = b.
	\end{aligned}
	\end{equation*}
    The implicit part of the above method is based on the Gauss RK method (see \cite{hairer_book}). The eigenvalues of $A$ are $[0, 0, 0, 0, 0]$.
\end{appendices}

\bibliographystyle{abbrv}

\end{document}